\documentclass[11pt,a4paper]{article}

\usepackage[T1]{fontenc}
\usepackage[utf8]{inputenc}
\usepackage{amsmath,amssymb,amsthm}
\usepackage{graphicx}
\usepackage{booktabs}
\usepackage{array}
\usepackage{siunitx}
\usepackage[margin=2.6cm]{geometry}
\usepackage{xcolor}
\usepackage{stmaryrd}
\usepackage{tikz}
\usepackage{mathrsfs}
\usetikzlibrary{arrows.meta,calc}
\usepackage[numbers,sort&compress]{natbib}
\usepackage[colorlinks,linkcolor=blue,citecolor=blue]{hyperref}

\newcommand{\signoff}[1]{}

\numberwithin{equation}{section}
\numberwithin{figure}{section}
\numberwithin{table}{section}
\newtheorem{theorem}{Theorem}[section]
\newtheorem{proposition}[theorem]{Proposition}

\newtheorem{lemma}[theorem]{Lemma}
\theoremstyle{definition}
\newtheorem{definition}[theorem]{Definition}

\newtheorem{remark}[theorem]{Remark}

\newcommand{\bv}[1]{\Vec{#1}}
\newcommand{\bvp}[1]{\mathbf{#1}}
\newcommand{\R}{\mathbb{R}}

\newcommand{\Eh}{E_h}
\newcommand{\Mv}{M_v}
\newcommand{\dt}{\Delta t}
\newcommand{\Xm}{\bv X^{m}}
\newcommand{\Xmp}{\bv X^{m+1}}

\title{A Structure-Preserving Symmetric Galerkin Boundary Element Method for Anisotropic Multi-phase Mullins--Sekerka Flow}

\author{Tokuhiro Eto\thanks{Universit\'{e} Claude Bernard Lyon 1, CNRS, Centrale Lyon, INSA Lyon, Universit\'{e} Jean Monnet, ICJ UMR5208, 69622 Villeurbanne, France. email:eto@math.univ-lyon1.fr}}
\date{\today}

\newcommand{\Bn}{B_n}

\newcommand{\RH}{R_{H}}

\newcommand{\jump}[1]{\llbracket#1\rrbracket}
\newcommand{\curvatureaniso}[0]{\varkappa_\gamma}
\newcommand{\energyaniso}[0]{E_\gamma}
\newcommand{\sgrad}[0]{\nabla_\Gamma}

\newcommand{\dH}[1]{\,\mathrm{d}\mathscr{H}^{#1}}
\newcommand{\dL}[1]{\,\mathrm{d}\mathscr{L}^{#1}}
\newcommand{\dmu}[0]{\,\mathrm{d}\mu}

\newcommand{\ddt}[0]{\frac{\mathrm{d}}{\mathrm{d}t}}
\newcommand{\dde}[0]{\frac{\mathrm{d}}{\mathrm{d}\varepsilon}}
\newcommand{\dtn}[1]{\Lambda_{#1}}
\newcommand{\dtnmulti}[1]{\boldsymbol{\Lambda}_{#1}}
\newcommand{\project}[1]{P_{#1}}
\newcommand{\sol}[1]{S_{#1}}

\newcommand{\phase}[1]{\Omega_{#1}}
\newcommand{\identitymatrix}[1]{\mathrm{I}_{#1}}
\newcommand{\identitymap}[1]{\mathcal{I}_{#1}}
\newcommand{\identitymapbdd}[0]{\mathcal{I}}
\newcommand{\totaln}[0]{N_{\mathrm{tot}}}

\newcommand{\operarorsingle}[1]{\mathcal{V}_{#1}}
\newcommand{\operarordouble}[1]{\mathcal{K}_{#1}}
\newcommand{\operarordoubleregional}[1]{\widetilde{\mathcal{K}}_{#1}}
\newcommand{\operarorhypersingular}[1]{\mathcal{W}_{#1}}
\newcommand{\matrixsingle}[1]{V_{#1}^h}
\newcommand{\matrixdouble}[1]{K_{#1}^h}
\newcommand{\matrixhypersingular}[1]{W_{#1}^h}
\newcommand{\matrixcross}[1]{M_{#1}^h}
\newcommand{\productdual}[3]{\bigl\langle #1,\,#2\bigr\rangle_{#3}}
\newcommand{\productll}[3]{\left(#1,\,#2\right)_{L^2(#3)}}
\newcommand{\productmass}[3]{\left(#1,\,#2\right)_{#3}}

\newcommand{\sgradindex}[1]{\nabla_{\Gamma_{#1}}}

\newcommand{\functionCharacteristic}[1]{\chi_{#1}}

\newcommand{\operatorJ}[1]{\mathcal J_{#1}}

\newcommand{\solGalerkin}[1]{S^h_{#1}}
\newcommand{\solintGalerkin}[1]{S^{h-}_{#1}}
\newcommand{\solextGalerkin}[1]{S^{h+}_{#1}}
\newcommand{\solintextGalerkin}[1]{S^{h\pm}_{#1}}
\newcommand{\projGalerkin}[1]{P^h_{#1}}
\newcommand{\spaceAffineGalerkin}[1]{X^h_{#1}}
\newcommand{\spaceConstGalerkin}[1]{Y^h_{#1}}

\newcommand{\indexphasefrom}[1]{p^-_{#1}}
\newcommand{\indexphaseto}[1]{p^+_{#1}}
\newcommand{\indextj}[2]{s^{#1}_{#2}}

\newcommand{\vectorunit}[1]{\mathbf1_{#1}}
\newcommand{\vectorzero}[1]{\mathbf0_{#1}}
\newcommand{\vectortangent}[1]{\bv \tau_{#1}}

\newcommand{\curvepolygon}[1]{\Gamma^h_{#1}}
\newcommand{\curvepolygonsec}[1]{\Gamma^{#1}}

\newcommand{\junction}[1]{\mathcal{T}_{#1}}
\newcommand{\junctionDisc}[1]{\mathcal{T}^h_{#1}}
\newcommand{\junctionDiscSec}[2]{\mathcal{T}^{#1}_{#2}}

\newcommand{\edges}[2]{\mathcal E_{#1}^{h}}
\newcommand{\edgesDisc}[2]{\mathcal E^{#1}_{#2}}

\newcommand{\region}[1]{\mathcal{R}_{#1}}
\newcommand{\regionDisc}[1]{\mathcal{R}^h_{#1}}

\newcommand{\normal}[0]{\bv{\nu}}
\newcommand{\normaldiscsec}[2]{\bv{\nu}^{#2}_{#1}}
\newcommand{\normalmass}[2]{\bv{\omega}^{#1}_{#2}}

\newcommand{\chemical}[1]{w_{#1}}
\newcommand{\chemicalExtension}[2]{w^{(#1)}_{#2}}
\newcommand{\chemicalvec}[1]{\bvp{w}}
\newcommand{\checmicalDisc}[1]{W_{#1}}
\newcommand{\checmicalDiscVec}[1]{{\bvp{W}}^{#1}}

\newcommand{\femspaceS}[2]{S^{#1}_{#2}}
\newcommand{\femspaceSS}[2]{\mathbb S^{#1}_{#2}}
\newcommand{\femspaceWW}[2]{\mathbb W^{#1}_{#2}}
\newcommand{\femspaceKK}[2]{\mathbb K^{#1}_{#2}}
\newcommand{\femspaceXX}[2]{\mathbb X^{#1}_{#2}}

\newcommand{\areaDisc}[1]{A_{#1}^h}
\newcommand{\areaDiscRegion}[1]{\mathcal A_{#1}^h}

\newcommand{\boundarycomponent}[1]{\Sigma_{#1}}

\newcommand{\matrixl}[1]{\ell^h_{#1}}
\newcommand{\matrixC}[1]{C^h_{#1}}
\newcommand{\matrixD}[1]{D^h_{\Gamma,#1}}
\newcommand{\matrixJ}[1]{J^h_{#1}}

\newcommand{\with}[0]{\qquad\text{with}\quad}
\newcommand{\sentence}[1]{\qquad\text{#1}\quad}

\newcommand{\doi}[1]{\href{https://doi.org/#1}{doi:\nolinkurl{#1}}}

\begin{document}
\maketitle

\begin{abstract}
A structure-preserving symmetric Galerkin boundary element method (SGBEM) is developed for the anisotropic multi-phase Mullins--Sekerka problem in $\R^2$.
The bulk harmonic problems are eliminated through a discrete Dirichlet-to-Neumann operator assembled from region-wise SGBEM discretization,
leading to a boundary-only formulation coupled to the evolution of a polygonal curve network with triple junctions.
The Galerkin approximation of the resulting boundary problem is shown to be uniquely solvable,
and the fully discrete scheme is unconditionally stable with respect to the time step
as long as the evolving polygonal network remains in an admissible class.
The accuracy of the proposed method is demonstrated by a convergence test under coupled space--time refinement against a similarity-rescaled exact solution consisting of three concentric circles.
\end{abstract}
\medskip
\noindent\textbf{Key words.} anisotropic multi-phase Mullins--Sekerka flow, symmetric Galerkin boundary element method, Dirichlet-to-Neumann operator, structure-preserving discretization, triple junctions.

\smallskip
\noindent\textbf{MSC codes.} 65N38, 53E10, 35R37, 80A22

\section{Introduction}\label{sec:intro}
Among curvature-driven free-boundary problems, the Mullins--Sekerka flow is
characterized by a non-local coupling between interfacial geometry and harmonic fields
in the surrounding phases. 
The Gibbs--Thomson relation prescribes the chemical
potential on the interface from its curvature, while the jump of the normal derivative
of the associated harmonic field determines the normal velocity of the interface.
Its numerical approximation therefore involves more than tracking a moving interface:
at each instant, a harmonic problem must be solved throughout the surrounding phases.
The same coupling underlies two fundamental structures of the evolution: the interfacial
energy is dissipated, and the area of each bounded phase is conserved.
In the anisotropic multi-phase setting, the interface is a curve network with curve-wise
direction-dependent surface energies, and the triple junctions must additionally satisfy the
Young--Herring force balance \cite{Herring51}.

The model originates in diffusion-controlled morphological evolution \cite{MS63},
provides a sharp-interface description of late-stage Ostwald ripening \cite{LS61}, and
arises as the sharp-interface limit of the Cahn--Hilliard equation \cite{CH58},
a limit derived and justified in \cite{Pego89,ABC94,S96}.
On the analytical side, existence for the two-phase flow is obtained in the framework
of area-preserving curve-shortening motions \cite{Chen93}, classical well-posedness
with surface tension is established in \cite{EscherSimonett97}, and the multi-phase
formulation goes back to \cite{BGS98}.
More recently, the two-phase Mullins--Sekerka problem in the plane has been recast,
via the method of potentials, as an evolution problem for the interface alone and its
well-posedness established in this boundary-only form \cite{EMM24}, an analytical
counterpart of the viewpoint adopted in the present paper.

Numerical methods for this flow can be grouped according to how they treat the bulk and the interface.
\textit{Phase-field methods} evolve the Cahn--Hilliard equation \cite{CH58} whose singular limit
is the Mullins--Sekerka flow \cite{ABC94};
\textit{level-set and threshold-dynamics methods} developed for geometric interface evolution more broadly
represent the interface implicitly through auxiliary bulk functions \cite{EO15,MBO94,SS11,ZCMO96}.
\textit{Boundary-integral methods} discretize the interface alone and represent the harmonic field by layer potentials
\cite{ZCH96,BCD95,AkaiwaMeiron95,CKT17,BB00}, building on general
boundary-integral and gradient-flow frameworks for interfacial dynamics \cite{HLS94,Mayer00},
with later work reaching multi-component fluids and Ostwald ripening \cite{HLS01} and, through recursively
compressed inverse preconditioning (RCIP),
elliptic problems on boundaries containing triple junctions \cite{HO08,H13}.
\textit{Parametric finite element methods} (PFEM)
\cite{BGN07,BGN08,BGN08a,BGN10,BGN20,BZ21,R23} achieve unconditional stability and structure preservation for anisotropic geometric
evolution equations;
for the Stefan and Mullins--Sekerka problems themselves,
stable PFEM discretization coupling the parametric interface to a bulk finite element field
has been introduced in \cite{BGN10-2}, treating fully anisotropic Gibbs--Thomson laws
with applications to dendritic growth, and extended to one-sided Stefan problems with
approximately crystalline anisotropy and kinetic undercooling in \cite{BGN13},
and recently structure-preserving schemes of this type have been developed
for the multi-phase Mullins--Sekerka problem \cite{EGN24}, its anisotropic variant
with kinetic undercooling \cite{EGN26-2}, and a closely related multi-phase Stefan problem \cite{EGN26}.

These PFEM structure-preserving results are based on bulk or whole-domain weak formulations.
The remaining challenge is to eliminate the bulk degrees of freedom
while retaining a fully discrete energy-dissipation inequality together with the
velocity-level constraints underlying phase-area conservation.

In this paper, the bulk harmonic problems are eliminated through interface
Dirichlet-to-Neumann (Steklov--Poincar\'e) maps,
assembled region by region across the curve network and discretized by the symmetric Galerkin boundary element method (SGBEM)
\cite{Sirtori79,Costabel88,S08,HsiaoWendland08,LangerSteinbach03},
built on the Maue identity \cite{Maue49}.
The variational realization makes the symmetry, positive semi-definiteness, and exact constant kernel of the discrete operator transparent,
and these three properties are used in the proofs of the discrete energy dissipation and velocity-level area conservation.
Boundary operators of this kind have long driven moving-boundary and shape computations in other fields
\cite{HanAtluri02,Frangi02,Dansou19,EpplerHarbrecht06,CraigSulem93},
and energy estimates are standard tools inside boundary-integral analyses \cite{CenicerosHou98,ALS17}.
On the boundary-only side, the author's collocation schemes based on the \textit{charge simulation method} approximate the two-phase and
multi-phase flows, with an area-preserving property in the two-phase case \cite{E24}
and phase-area conservation to machine precision at the velocity level in the multi-phase case \cite{E26};
the present work places this boundary-only approach in a variational Galerkin framework
with unconditional stability and velocity-level area conservation.

The main contributions of this paper are the following.
\begin{itemize}
  \item A boundary-only formulation of the anisotropic multi-phase Mullins--Sekerka
        flow in which the assembled interface Steklov--Poincar\'e operator acts as the
        non-local mobility, together with its SGBEM discretization, proved symmetric
        positive semi-definite with exact constant kernel
        (Proposition~\ref{prop:discrete-dtn-structure}).
  \item A fully discrete semi-implicit scheme for anisotropies of the square-root class
        satisfying an energy-dissipation inequality for every time step
        (Theorem~\ref{thm:main}).
  \item The first variation of the discrete signed area of every bounded phase area vanishes, and the polygonal area
        defect equals exactly the signed area of the displacement polygon
        (Propositions~\ref{prop:cons} and~\ref{prop:areadefect}).
  \item Unique solvability of the gauge-bordered one-step system on
        solvability-admissible curve networks (Theorem~\ref{prop:solv}).
\end{itemize}

Several numerical experiments confirm the dissipation and conservation across three
orders of magnitude in the time step, verify the area-defect identity to machine precision over that sweep, and
exercise the scheme on anisotropic coarsening with a different anisotropy on each phase
and on a junction network mixing three symmetries across its curves.

This paper is organized as follows. In Section~\ref{sec:setting},
we state the continuous multi-phase anisotropic Mullins--Sekerka problem
together with general assumptions for concerned curve network.
In Section~\ref{sec:dtn}, we introduce the interface
Dirichlet-to-Neumann operator together with
anisotropic energy dissipation and area conservation properties of a classical solution.
In Section~\ref{sec:weak}, we present the weak formulation of the target problem reduced to the boundary trace.
In Section~\ref{sec:dio}, we construct the discrete interfacial operator and establish its structure corresponding to the continuous interfacial operator.
In Section~\ref{sec:scheme}, we write out the fully discrete scheme to approximate a solution to the underlying problem
and prove its structural properties.
Section~\ref{sec:numerics} exhibits the numerical experiments to confirm
feasibility and capability for different-anisotropy setting with triple junctions
as well as energy dissipation, area conservation, and the convergence rate
of the discrete flow under a coupled space--time refinement against an isotropic
three-concentric-circle exact solution.
Finally, we summarize the outcomes of this paper and limitations of the present research in Section~\ref{sec:conclusion}.

\section{Problem setting}\label{sec:setting}

Let $\Gamma(t)$ be an evolving curve network in $\R^2$ which is composed of curves
$\Gamma_i(t)\,(1\leq i\leq I_S,\, I_S \geq 1)$ partitioning
the ambient space into connected regions $\region{\ell}(t)\,(1\leq \ell\leq I_R,\, I_R\geq 2)$, exactly one of them unbounded;
for convenience, we suppose that $\region{I_R}(t)$ is unbounded;
each region is associated with one of $I_P\, (\geq 2)$ phases which are indexed by $p = 0,\cdots,I_P -1$.
Each phase is possibly composed of several regions;
to indicate this relationship between phases and regions in the curve network,
we introduce a map $\{0,\ldots,I_P-1\}\ni p\mapsto \phase{p}\in 2^{\{1,\ldots, I_R \}}$,
where $\phase{p}$ denotes the set of all indexes of the regions assigned to the phase $p$;
the unbounded region is always assigned to the phase $0$, that is,
$I_R\in\phase{0}$;
some triplets of three open curves in $\Gamma(t)$ possibly compose triple junction points $\junction{k}(t)\,(1\leq k\leq I_T,\, I_T \geq 0)$;
each three open curves which meet at $\junction{k}$ is identified a sequence of indexes $1\leq \indextj{k}{1} < \indextj{k}{2} < \indextj{k}{3} \leq I_S$;
we set $I_T = 0$ to indicate that $\Gamma(t)$ does not include any triple junction.

In this setting, we aim to approximate a $1$-parametrized family of pairs $\{(\bvp{w}(\cdot,t),\Gamma(t))\}_{t > 0}$ satisfying the following system of equations:
\begin{equation}\label{eq:strong}
  \begin{cases}
    \Delta \bvp{w} = \bvp{0}
      &\quad\text{in}\quad \R^2\setminus\Gamma(t),\ t\in(0,T], \\
    \bvp{w}\cdot\jump{\bvp{\chi}} = \curvatureaniso
      &\quad\text{on}\quad \Gamma(t),\ t\in (0,T], \\
    \jump{\nabla\bvp{w}}\bv{\nu} = -V\jump{\bvp{\chi}}
      &\quad\text{on}\quad \Gamma(t),\ t\in (0,T], \\
    \nabla\bvp{w}(\bv{x},t) = O\!\left(\frac{1}{|\bv{x}|^{2}}\right)
      &\quad\text{as}\quad |\bv{x}|\to\infty,\ t\in (0,T], \\
    \jump{\bvp{w}} = \bvp{0}
      &\quad\text{on}\quad \Gamma(t),\ t\in (0,T], \\
    \bvp{w}\cdot\bvp{1} = 0
      &\quad\text{in}\quad \R^2\setminus\Gamma(t),\ t\in (0,T], \\
    \displaystyle\sum_{\ell=1}^3\varepsilon_{k,\ell}\,
      \bv{\xi}_{\indextj{k}{\ell}}(\bv{\nu}_{\indextj{k}{\ell}})^{\perp} = \bv{0}
      &\quad\text{at}\quad \junction{k}(t),\ 1\leq k \leq I_T,\ t\in (0,T], \\
    \Gamma(0) = \Gamma_0.
  \end{cases}
\end{equation}
Here, the constant $T > 0$ is a time horizon. $\bvp{w}(\cdot,t):\R^2\setminus\Gamma(t)\to\R^{I_P}$ collects the chemical potentials of the
phases, and $\bvp{\chi}(\cdot,t):\R^2\setminus\Gamma(t)\to\{0,1\}^{I_P}$ is the characteristic vector of the
phases.  We suppose that $\Gamma_{i}$ separate different phases $\indexphaseto{i}$ and $\indexphasefrom{i}$,
and the unit normal $\normal_i$ is supposed to point
from $\indexphasefrom{i}$ to $\indexphaseto{i}$, $V_i$ is the normal velocity of $\Gamma_{i}$ in the direction of $\normal_i$,
and the jump of a quantity $u$, which is possibly vector-valued, is defined by
\begin{equation}
  \label{eq:jump}
  \jump{u}(\bv{x}) := \lim_{\varepsilon\,\downarrow\, 0}
    \left\{ u(\bv{x}+\varepsilon\normal_i) - u(\bv{x}-\varepsilon\normal_i) \right\}
  \sentence{for}\bv{x}\in\Gamma_{i},\quad 1\leq i\leq I_S.
\end{equation}
We note that the jump quantity is not defined at the triple junctions $\junction{k}(t)$.

Throughout, for $\bv a=(a_1,a_2)^\top\in\R^2$, we write
$\bv a^{\perp}:=(-a_2,a_1)^\top$ for the counter-clockwise rotation of $\bv a$ by the
angle $\pi/2$.
For each oriented curve, let $\vectortangent{i}$ be the unit tangent chosen so that $\normal_i=-\vectortangent{i}^\perp$.
If the curve is open and $\junction{k}(t)$ is an endpoint of $\Gamma_{\indextj{k}{\ell}}$,
define its endpoint-incidence sign by
\[
 \varepsilon_{k,\ell}:=
 \begin{cases}
  +1,&\junction{k}\text{ is the terminal endpoint of }\Gamma_{\indextj{k}{\ell}},\\
  -1,&\junction{k}\text{ is the initial endpoint of }\Gamma_{\indextj{k}{\ell}}.
 \end{cases}
\]
Then, we observe that $\varepsilon_{k,\ell}\bv\tau_{\indextj{k}{\ell}}$ is the outward unit co-normal of the incident curve at $\junction{k}$,
and the junction condition in \eqref{eq:strong} is independent of the chosen curve parametrization.

Each curve $\Gamma_i$ may carry its own anisotropy $\gamma_i$ of the square-root class:
\begin{equation}\label{eq:gamma}
  \gamma_i(\normal) = c_i \sum_{l=1}^{L_i}
  \sqrt{\normal^{\top}G_{i,l}\normal},
\end{equation}
where $G_{i,l}\in\R^{2\times2}$ is symmetric positive definite and $c_i > 0$ for $1\leq i\leq I_S$.

Each $\gamma_i$ is convex and positively one-homogeneous
(see \cite[Eq.~(2.15)]{BGN10}).  On $\Gamma_i$,
the Cahn--Hoffman vector and anisotropic curvature are defined by
\[
  \bv\xi_i(\bv\nu_i):=\nabla\gamma_i(\bv\nu_i),
  \qquad
  \kappa_{\gamma_i}:=-\sgrad\cdot\bv\xi_i(\bv\nu_i),
\]
where $\sgrad$ denotes the (vector-valued) surface gradient, and $\curvatureaniso$ denotes
the function on $\Gamma$ whose restriction to each $\Gamma_i$ is $\kappa_{\gamma_i}$.
Since each $\Gamma_i$ is a curve, the surface gradient of a scalar function $\phi$ is
determined by the scalar tangential derivative $\partial_{s_i}\phi$ with respect to the
arc-length of $\Gamma_i$ through
\[
  \sgradindex{i}\phi=(\partial_{s_i}\phi)\,\bv\tau_i;
\]
we reserve $\sgrad$ for the vector-valued operator, and write $\partial_s$ for the scalar
tangential derivative.
The anisotropic surface energy is defined by
\begin{equation}
  \label{eq:energy}
E_{\gamma}(\Gamma)
:=\sum_{i=1}^{I_S}\int_{\Gamma_i}\gamma_i(\bv\nu_i)\,\dH{1},
\end{equation}
where $\mathscr{H}^1$ denotes the $1$-dimensional Hausdorff measure.
In particular, if $\gamma_i(\normal)=\sigma_i|\normal|$, then
$E_{\gamma}(\Gamma)=\sum_i\sigma_i\mathscr{H}^1(\Gamma_i)$ holds; for equal
$\sigma_i$, the Young--Herring condition in \eqref{eq:strong} reduces to the $2\pi/3$ angle condition.
Finally, $\Gamma_0$ is given as an initial curve network.

The junction condition in \eqref{eq:strong} is written in the contracted planar form.
To relate it to the formulation in terms of normals and co-normals, write
$\bv\mu_{k,\ell}:=\varepsilon_{k,\ell}\bv\tau_{\indextj{k}{\ell}}$ for the outward unit
co-normal introduced above.  Since $\gamma_i$ is positively homogeneous degree one,
Euler's relation gives $\bv\xi_i(\bv\nu_i)\cdot\bv\nu_i=\gamma_i(\bv\nu_i)$, so that
$\bv\xi_i(\bv\nu_i)=\gamma_i(\bv\nu_i)\bv\nu_i
+\bigl(\bv\xi_i(\bv\nu_i)\cdot\bv\tau_i\bigr)\bv\tau_i$;
applying the rotation $\perp$ and using $\bv\nu_i^{\perp}=\bv\tau_i$ and $\bv\tau_i^{\perp}=-\bv\nu_i$,
we obtain
\begin{equation}\label{eq:herring-conormal}
 \varepsilon_{k,\ell}\,\bv\xi_{\indextj{k}{\ell}}(\bv\nu_{\indextj{k}{\ell}})^{\perp}
 =\gamma_{\indextj{k}{\ell}}(\bv\nu_{\indextj{k}{\ell}})\,\bv\mu_{k,\ell}
 -\bigl(\bv\xi_{\indextj{k}{\ell}}(\bv\nu_{\indextj{k}{\ell}})\cdot\bv\mu_{k,\ell}\bigr)
  \bv\nu_{\indextj{k}{\ell}}.
\end{equation}
Summing \eqref{eq:herring-conormal} over $\ell=1,\,2,\,3$ shows that the junction condition in
\eqref{eq:strong} coincides with the anisotropic force balance of \cite[Eq.~(1e)]{EGN26-2}.

\section{Interface operator}\label{sec:dtn}
In this section, we introduce an interfacial operator that describes the target
problem \eqref{eq:strong} as a boundary-only problem.
Let $H^{\frac12}(\Gamma)$ be the Hilbert space of compatible traces on the curve network $\Gamma$,
equipped with the quotient trace norm.  For a region $\region{}$, let
\[
  \project{\region{}}:H^{\frac12}(\Gamma)
  \longrightarrow H^{\frac12}(\partial\region{})
\]
be restriction to the boundary of $\region{}$.

Given $\phi\in H^{\frac12}(\partial\region{})$, let $u$ be its harmonic extension, that is,
\begin{equation}\label{eq:laplace-dirichlet}
  \begin{cases}
  \Delta u=0 &\sentence{in}\region{},\\
  u=\phi &\sentence{on}\partial\region{},
  \end{cases}
\end{equation}
with the far-field condition in \eqref{eq:strong} when $\region{}$ is unbounded.

Let $\sol{\region{}}:H^{\frac12}(\partial\region{})\to H^{-\frac12}(\partial\region{})$ be the
Dirichlet-to-Neumann map defined by
\[
  \sol{\region{}}\phi := \frac{\partial u}{\partial \normal_{\region{}}},
\]
where $\normal_{\region{}}$ is the outward unit normal vector on $\partial\region{}$.

The scalar interface Dirichlet-to-Neumann operator is defined as the assembled operator:
\begin{equation}\label{eq:dtn}
  \dtn{\Gamma}
  :=\sum_{r=1}^{I_R}
  \project{\region{r}}^*\,\sol{\region{r}}\,\project{\region{r}}
  :H^{\frac12}(\Gamma)\longrightarrow H^{-\frac12}(\Gamma),
\end{equation}
where for a region $\region{}$, $\project{\region{}}^*:H^{-\frac12}(\partial\region{})\to H^{-\frac12}(\Gamma)$ assembles the regional flux on $\partial\region{}$ into a distribution on $\Gamma$.
The component-wise action by $\dtn{\Gamma}$ on the trace
$\bvp{\phi}=(\phi_0,\ldots,\phi_{I_P-1})^\top$ of the multi-phase chemical potential is represented as
\[
  \dtnmulti{\Gamma}
  :=\identitymatrix{I_P}\otimes\dtn{\Gamma},
\]
where $\identitymatrix{I_P}$ denotes the identity matrix in $\R^{I_P\times I_P}$;
for a matrix $A$ and a linear map $T$, $A\otimes T$ denotes the Kronecker product, which represents the component-wise action.

According to the above definition, the operator $\dtnmulti{\Gamma}$ maps the chemical-potential traces to their assembled normal fluxes.
For later use, we write the dual product of $v\in H^{-\frac12}(\Gamma)$ and $w\in H^{\frac12}(\Gamma)$ as $\productdual{v}{w}{\Gamma}$,
while we use the notation that $\productll{v}{w}{\Gamma}$ for $v,\,w\in L^2(\Gamma)$ to denote the standard $L^2$-inner product.

We now present important properties of the interfacial operator $\dtn{\Gamma}$.
\begin{proposition}\label{prop:dtn-structure}
The operator $\dtn{\Gamma}$ is symmetric, that is,
  \[
  \productdual{\dtn{\Gamma}v}{w}{\Gamma} = \productdual{\dtn{\Gamma}w}{v}{\Gamma} \qquad\text{for every}\quad v,\,w\in H^{\frac12}(\Gamma).
  \]
  Moreover, $\dtn{\Gamma}$ is positive semi-definite, that is,
  \[
  \productdual{\dtn{\Gamma}w}{w}{\Gamma}\geq 0\qquad \text{for every}\quad w\in H^{\frac12}(\Gamma).
  \]
  Finally, the kernel of the operator $\dtn{\Gamma}$ consists of all constants, that is,
  \[
  \ker\dtn{\Gamma}=\operatorname{span}\{1\}.
  \]
\end{proposition}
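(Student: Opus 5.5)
The plan is to reduce all three assertions to Green's first identity on each region. Fix $v,w\in H^{\frac12}(\Gamma)$ and, for each region $\region{r}$, let $u_r$ and $z_r$ be the harmonic extensions of $\project{\region{r}}v$ and $\project{\region{r}}w$ in the sense of \eqref{eq:laplace-dirichlet}. By the definition \eqref{eq:dtn} and of the adjoint $\project{\region{r}}^*$,
\[
\productdual{\dtn{\Gamma}v}{w}{\Gamma}=\sum_{r=1}^{I_R}\productdual{\sol{\region{r}}\project{\region{r}}v}{\project{\region{r}}w}{\partial\region{r}} .
\]
The key step is the regional identity
\[
\productdual{\sol{\region{r}}\phi}{\psi}{\partial\region{r}}=\int_{\region{r}}\nabla u\cdot\nabla z\dL{2},
\]
where $u,z$ are the harmonic extensions of $\phi,\psi$. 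For bounded Lipschitz regions (curvilinear polygons with corners at triple junctions), this is the standard weak definition of the normal trace: $\sol{\region{}}\phi$ is characterized by $\productdual{\sol{\region{}}\phi}{\psi}{}=\int\nabla u\cdot\nabla \tilde z$ for any $H^1$ lifting $\tilde z$ of $\psi$, and this is independent of the lifting because $u$ is harmonic. Summing over $r$ immediately gives symmetry and the quadratic form $\sum_r\|\nabla u_r\|^2_{L^2(\region{r})}\ge0$.

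The main obstacle is the unbounded region $\region{I_R}$. There I would work in the energy space $\{u\in H^1_{\mathrm{loc}}:\nabla u\in L^2\}$. The far-field condition $\nabla u=O(|\bv x|^{-2})$ forces $u=c+O(|\bv x|^{-1})$, so the exterior Dirichlet problem has a unique solution that is bounded at infinity. I would apply Green's identity on $\region{I_R}\cap B_R$ and let $R\to\infty$. The boundary term on $\partial B_R$ is $O(R^{-2})\cdot O(1)\cdot 2\pi R\to0$, since $z$ is bounded and $\nabla u=O(R^{-2})$. Hence the regional identity also holds on the unbounded region, with a finite Dirichlet integral. I would treat $H^{\pm\frac12}$ duality on the non-smooth boundaries as standard (e.g.\ via McLean-type trace theory) and not redo it.

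For the kernel, first note that constants lie in it: the harmonic extension of a constant into each region is that constant, including the exterior, where it satisfies the decay condition. So each $\sol{\region{r}}1=0$. Conversely, suppose $\dtn{\Gamma}w=0$. Then $0=\productdual{\dtn{\Gamma}w}{w}{\Gamma}=\sum_r\|\nabla u_r\|^2$, so each $u_r$ is constant on the connected region $\region{r}$, and hence $w$ is constant on each $\partial\region{r}$. Because $w$ is a single-valued compatible trace on $\Gamma$, adjacent regions sharing a curve $\Gamma_i$ carry the same constant. Assuming the network $\Gamma$ is connected (as the triple-junction setting implicitly requires, or else through the unbounded region, whose boundary meets every connected component of $\Gamma$), these constants propagate to a single global constant. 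I would state this connectivity explicitly as an assumption, since for disconnected networks the outer region still links all components, and I would check this step with care.
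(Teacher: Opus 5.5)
Your approach is the same as the paper's. You use the regionwise Green identity and pass to the limit on $\mathcal R_{I_R}\cap B_L(\mathbf 0)$, where boundedness of one extension and the $O(L^{-2})$ decay of the other's normal derivative kill the circle term. Summing gives the energy identity. The kernel then follows from vanishing Dirichlet energy plus propagation of the regional constants across shared curves. Everything up to that last propagation is fine.

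The propagation step needs repair.
\begin{itemize}
\item Adding the hypothesis that $\Gamma$ is connected is unnecessary. It would also exclude networks the paper uses: $I_T=0$ with several disjoint closed curves in Sections~\ref{subsec:dtsweep} and~\ref{subsec:quadost}, and the three concentric circles in Section~\ref{subsec:accuracy}. The triple-junction setting does not force connectedness either.
\item Your fallback claim, that $\partial\mathcal R_{I_R}$ meets every connected component of $\Gamma$, is false for nested curves. For the concentric circles only the outermost circle bounds the exterior region.
\end{itemize}

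What you need is that all the regional constants $c_r$ coincide, and this holds for every network because $\mathbb R^2$ is connected. Regions sharing a curve carry the same constant, since both equal $w$ on that curve. So the piecewise constant function equal to $c_r$ on $\mathcal R_r$ has no jump across any curve. Its distributional gradient therefore vanishes on all of $\mathbb R^2$, and it is globally constant. Equivalently, a path joining two regions that crosses $\Gamma$ only at finitely many non-junction points produces a chain of pairwise adjacent regions. This is the paper's ``any two regions are joined by a chain of such pairs.'' With it, your proof goes through with no connectivity assumption on $\Gamma$.
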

\begin{proof}
For $\chemical{1},\,\chemical{2}\in H^{\frac12}(\Gamma)$,
let $\chemicalExtension{r}{1}$ and $\chemicalExtension{r}{2}$ be the
harmonic extensions of $\project{\region{r}}\chemical{1}$ and
$\project{\region{r}}\chemical{2}$, respectively.
Green's identity gives
\begin{equation}
  \label{eq:green-identity-1}
\productdual{\sol{\region{r}}\project{\region{r}}\chemical{2}}{\project{\region{r}}\chemical{1}}{\partial\region{r}}
= \int_{\region{r}}\nabla\chemicalExtension{r}{1}\cdot\nabla \chemicalExtension{r}{2}\,\dL{2}
\end{equation}
for $1\leq r\leq I_R-1$, where $\mathscr{L}^2$ denotes the $2$-dimensional Lebesgue measure.

For the unbounded region
$\region{I_R}$, we choose $L > 0$ so large that $\Gamma\subset B_L(\bv 0)$ and
apply it on $\region{I_R}\cap B_L(\bv 0)$ to derive
\begin{equation}
  \label{eq:green-identity-2}
  \productdual{\sol{\region{I_R}}\project{\region{I_R}}\chemical{2}}
    {\project{\region{I_R}}\chemical{1}}{\partial\region{I_R}}
  =\int_{\region{I_R}\cap B_L(\bv 0)}
    \nabla \chemicalExtension{I_R}{1}\cdot\nabla \chemicalExtension{I_R}{2}\,\dL{2}
  -\int_{\partial B_L(\bv 0)}\chemicalExtension{I_R}{1}\frac{\partial \chemicalExtension{I_R}{2}}{\partial \normal_{B_L(\bv 0)}}\,\dH{1},
\end{equation}
where $\normal_{B_L(\bv{0})}(\bv x) = \bv x / |\bv x|$ for $\bv x\neq \bv 0$.
The far-field condition in \eqref{eq:strong} implies that $\chemicalExtension{I_R}{1}$ is bounded (see \cite[Lemma 3]{E24}), and
\[
\frac{\partial\chemicalExtension{I_R}{2}}{\partial\normal_{B_L(\bv 0)}} = O\left(\frac{1}{L^2}\right)\qquad\text{on}\quad \partial B_L(\bv 0)\qquad\text{as}\quad L\to\infty,
\]
and hence, the boundary integral is $O(L^{-1})$ as $L\to\infty$.
Letting $L\to\infty$ and summing \eqref{eq:green-identity-1} and \eqref{eq:green-identity-2} over $1\leq r\leq I_R$
yields
\begin{equation}
  \label{eq:energy-indentity}
  \productdual{\dtn{\Gamma}\chemical{2}}{\chemical{1}}{\Gamma}
  =\sum_{r=1}^{I_R}\int_{\region{r}}
    \nabla \chemicalExtension{r}{1}\cdot\nabla \chemicalExtension{r}{2}\,\dL{2}.
\end{equation}
The right-hand side is symmetric with respect to $\chemical{1}$ and $\chemical{2}$ and is non-negative when $\chemical{1} = \chemical{2}$.
  If $\chemical{}\equiv c$ is constant, every regional harmonic extension of $\chemical{}$ is constant,
  and thus, we have $\dtn{\Gamma}c=0$ which means $\chemical{}\in\ker\dtn{\Gamma}$.

  Conversely, if $\chemical{}\in\ker\dtn{\Gamma}$, the energy identity \eqref{eq:energy-indentity} with
$\chemical{1}=\chemical{2} = w$ gives $\nabla \chemicalExtension{r}{}=0$ for every $1\leq r\leq I_R$.
Since each region $\region{r}$ is connected, $\chemicalExtension{r}{}$ is constant.
Two regions sharing a curve of $\Gamma$ carry the same constant, since the
traces of their harmonic extensions on that curve coincide.
Any two regions are joined by a chain of such pairs.
Hence, all the constants coincide and $\chemical{}\equiv c$ for some $c\in\R$.
The proof is now complete.
\end{proof}

As an example, we confirm that the $I_P$-potential formulation
\eqref{eq:strong} collapses, for two phases, to the classical Mullins--Sekerka law driven
by a regional Dirichlet-to-Neumann map.
Suppose that $I_R = I_P = 2$, $I_S = 1$, $I_T = 0$, and $\normal_1$ is the unit normal vector on $\Gamma_1$ pointing into the region $\region{1}$;
the relationship between the regions and phases is described as
$\phase{0} = \{2\}$, and $\phase{1} = \{1\}$.
In this setting, the system \eqref{eq:strong} is reduced to
\begin{equation*}
  \begin{cases}
    \Delta \chemical{0}(\cdot,t) = 0\qquad&\text{in}\quad\R^2\setminus\Gamma_1(t),\\
    \Delta \chemical{1}(\cdot,t) = 0\qquad&\text{in}\quad\R^2\setminus\Gamma_1(t),\\
    \chemical{1}(\cdot,t) - \chemical{0}(\cdot,t) = \curvatureaniso\qquad&\text{on}\quad\Gamma_1(t),\\
    -V\begin{pmatrix}
      -1\\1
    \end{pmatrix} = \begin{pmatrix}
      \jump{\nabla\chemical{0}}\normal_1\\
      \jump{\nabla\chemical{1}}\normal_1
    \end{pmatrix}\qquad &\text{on}\quad\Gamma_1(t).
  \end{cases}
\end{equation*}
By the zero-sum condition $\chemical{1} + \chemical{0} = \bvp{w}\cdot\bvp{1} = 0$, we have the system with respect to $\chemical{1}$:
\begin{equation*}
  \begin{cases}
    \Delta\chemical{1} = 0\qquad & \text{in}\quad\R^2\setminus\Gamma_{1}(t),\\
    \chemical{1} = \frac12 \curvatureaniso & \text{on}\quad\Gamma_1(t),\\
    V = -\jump{\nabla\chemical{1}}\cdot\normal_1\qquad & \text{on}\quad\Gamma_1(t).
  \end{cases}
\end{equation*}
Thus, we have
\begin{equation*}
  \begin{aligned}
  V &= -\left(\nabla\chemical{1}\lfloor_{\region{1}} - \nabla\chemical{1}\lfloor_{\region{2}}\right)\cdot\normal_1
  = \nabla\chemical{1}\lfloor_{\region{1}}\cdot\normal_{\region{1}}
    +\nabla\chemical{1}\lfloor_{\region{2}}\cdot\normal_{\region{2}}\\
  &=\sol{\region{1}}\left(\frac12\curvatureaniso\right)
    +\sol{\region{2}}\left(\frac12\curvatureaniso\right)
    =\frac12\dtn{\Gamma}\curvatureaniso,
  \end{aligned}
\end{equation*}
where we note that $\project{\region{1}}$ and $\project{\region{2}}$ are identical since $\Gamma_1 = \partial\region{1} = \partial\region{2}$.
The zero-sum gauge condition splits the anisotropic curvature equally between $\chemical{0}$ and
$\chemical{1}$, and the vector system reduces to this scalar law.

In the general multi-phase case, the interface
operator \eqref{eq:dtn} satisfies

\begin{equation}
  \label{eq:mjump}
\dtn{\Gamma}\chemical{p} = -\jump{\nabla\chemical{p}}\cdot\normal,
\end{equation}
where $\chemical{p}$ is understood as the trace of $\chemical{p}$ on $\Gamma(t)$.
Thus, the motion law of \eqref{eq:strong} reads
\begin{equation}\label{eq:mstefan}
  \dtn{\Gamma}\chemical{p} = V\jump{\bvp{\chi}_p}
  \qquad\text{on}\quad\Gamma(t),\qquad p = 0,\cdots,I_P-1.
\end{equation}
Since $\dtn{\Gamma}$ is symmetric with $\dtn{\Gamma}1=0$ according to Proposition~\ref{prop:dtn-structure},
every solution of \eqref{eq:mstefan} satisfies
\[
\productdual{V\jump{\bvp{\chi}_p}}{1}{\Gamma}
=\productdual{\dtn{\Gamma}\chemical{p}}{1}{\Gamma}
=\productdual{\dtn{\Gamma}1}{\chemical{p}}{\Gamma}=0,
\]
and thus, its right-hand side annihilates the constants,
\begin{equation}
  \label{eq:stefan-compatibility}
\int_\Gamma \jump{\bvp{\chi}_p} V\,\dH{1} = 0\qquad\text{for every}\quad p = 0,\cdots,I_P-1.
\end{equation}
For $1\leq p\leq I_P-1$, this compatibility condition is the per-phase area
conservation of Proposition~\ref{prop:ccons}.

In general, for $s\in\R$, write
\begin{equation}\label{eq:arcwise-spaces}
\mathcal{X}_{\Gamma}^s:=\prod_{i=1}^{I_S}H^s(\Gamma_i)
,\qquad \mathcal X_\Gamma^0=\prod_{i=1}^{I_S}L^2(\Gamma_i),
\end{equation}
so that no compatibility at the junctions is imposed on the elements of
$\mathcal X_\Gamma^s$, and $\mathcal X_\Gamma^{-\frac12}$ is the dual of
$\mathcal X_\Gamma^{\frac12}$ with respect to the arc-wise pairing
$\productdual{v}{w}{\Gamma}=\sum_{i=1}^{I_S}\productdual{v_i}{w_i}{\Gamma_i}$, which
extends the pairing of $H^{-\frac12}(\Gamma)$ with $H^{\frac12}(\Gamma)$ used above.
To pass between compatible traces and arc-wise data, we use the curve-wise restriction
\[
  \project{\Gamma}:H^{\frac12}(\Gamma)\longrightarrow\mathcal X_\Gamma^{\frac12},
  \qquad
  \project{\Gamma}\phi:=(\phi\lfloor_{\Gamma_1},\ldots,\phi\lfloor_{\Gamma_{I_S}})^\top,
\]
which is bounded and injective, and its adjoint
$\project{\Gamma}^{*}:\mathcal X_\Gamma^{-\frac12}\to H^{-\frac12}(\Gamma)$, which assembles
arc-wise densities into a functional on compatible traces.  These play for the curves
$\Gamma_i$ the role that $\project{\region{}}$ and $\project{\region{}}^*$ play for the
region boundaries $\partial\region{}$; the discrete counterpart of $\project{\Gamma}$ is
the curve-restriction matrix \eqref{eq:curve-restriction} below.
For interface trace data
$\phi=(\phi_1,\ldots,\phi_{I_S})^\top\in\mathcal{X}_{\Gamma}^s$,
define the phase-incidence multiplication operator:
\begin{equation}\label{eq:Mp}
  M_p\phi:=
  \bigl(m_{p,1}\phi_1,\ldots,m_{p,I_S}\phi_{I_S}\bigr)^\top
  \qquad\text{with}\quad
  m_{p,i}:=\jump{\bvp{\chi}_p}\big|_{\Gamma_i}\in\{-1,0,1\},
\end{equation}
and in operator-block notation,
\[
  M_p=\operatorname{diag}\bigl(m_{p,1}\identitymap{1},\ldots,
  m_{p,I_S}\identitymap{I_S}\bigr).
\]
Here, $\identitymap{} = (\identitymap{i})_{i=1}^{I_S}$ denote the identity embeddings, that is,
$\mathcal I_i(\bv x) := \bv x$ for each $\bv x\in\Gamma_i$.
The same notation is used on $\mathcal{X}_{\Gamma}^{1/2}$ and its dual
$\mathcal{X}_{\Gamma}^{-1/2}$.  Thus, $M_p$ has $I_S\times I_S$ operator blocks; the phase label $p$ selects one such operator.
The sign $m_{p,i}$ is zero when phase $p$ is not adjacent to $\Gamma_i$, and otherwise
records on which side of the oriented interface that phase lies.

As an example of the operators $M_p$, consider the three-phase double bubble shown in
Figure~\ref{fig:three-phase-double-bubble}.
Phase $0$ occupies the exterior, phases $1$ and $2$ respectively occupy
the left and right domains, and
\[
\Gamma=\Gamma_{1}\cup\Gamma_{2}\cup\Gamma_{3},
\]
with the interfaces oriented from $0$ to $1$, from $1$ to $2$, and from $2$ to $0$,
respectively.
\begin{figure}[t]
  \centering
  \begin{tikzpicture}[
      scale=0.82,
      >=stealth,
      interface/.style={draw=blue!25!black, line width=1.15pt},
      normal/.style={->, draw=red!75!black, line width=0.9pt},
      phase/.style={font=\large\bfseries, text=blue!20!black},
      curve label/.style={font=\small, fill=white, fill opacity=0.86,
                          text opacity=1, inner sep=2.5pt, rounded corners=1.5pt}
    ]
    \coordinate (T) at (0,1.48);
    \coordinate (B) at (0,-1.48);

    \path[fill=gray!14] (-3.65,-2.55) rectangle (3.65,2.70);

    \path[fill={rgb,255:red,86;green,180;blue,233}, fill opacity=0.28]
      (T) .. controls (-1.20,2.18) and (-2.42,1.16) .. (-2.42,0)
          .. controls (-2.42,-1.16) and (-1.20,-2.18) .. (B) -- cycle;
    \path[fill={rgb,255:red,230;green,159;blue,0}, fill opacity=0.28]
      (T) .. controls (1.20,2.18) and (2.42,1.16) .. (2.42,0)
          .. controls (2.42,-1.16) and (1.20,-2.18) .. (B) -- cycle;

    \draw[interface] (T) .. controls (-1.20,2.18) and (-2.42,1.16) .. (-2.42,0)
                             .. controls (-2.42,-1.16) and (-1.20,-2.18) .. (B);
    \draw[interface] (T) -- (B);
    \draw[interface] (B) .. controls (1.20,-2.18) and (2.42,-1.16) .. (2.42,0)
                             .. controls (2.42,1.16) and (1.20,2.18) .. (T);

    \fill[blue!25!black] (T) circle (2.1pt) (B) circle (2.1pt);
    \node[phase] at (-1.02,-0.66) {$1$};
    \node[phase] at (1.02,-0.66) {$2$};
    \node[phase] at (0,2.34) {$0$};
    \node[curve label] at (-1.62,0.78) {$\Gamma_{1}$};
    \node[curve label] at (0.38,0.72) {$\Gamma_{2}$};
    \node[curve label] at (1.62,0.78) {$\Gamma_{3}$};

    \draw[normal] (-2.42,0) -- (-1.58,0)
      node[midway,below=2pt,text=red!55!black] {$\normal_{1}$};
    \draw[normal] (0,0) -- (0.84,0)
      node[midway,below=2pt,text=red!55!black] {$\normal_{2}$};
    \draw[normal] (2.42,0) -- (3.26,0)
      node[midway,below=2pt,text=red!55!black] {$\normal_{3}$};
  \end{tikzpicture}

  \caption{Three-phase double bubble. The arrows show the normals defining the jumps.}
  \label{fig:three-phase-double-bubble}
\end{figure}
Ordering the interfaces by $(\Gamma_1,\Gamma_2,\Gamma_3)$, definition~\eqref{eq:Mp}
gives
\begin{equation*}
\begin{aligned}
M_0&=\operatorname{diag}(-\identitymap{1},0,\identitymap{3}),&
M_1&=\operatorname{diag}(\identitymap{1},-\identitymap{2},0),&
M_2&=\operatorname{diag}(0,\identitymap{2},-\identitymap{3}).
\end{aligned}
\end{equation*}

As a conclusion of this section, we shall show two important properties of a classical solution to \eqref{eq:strong},
a dissipation property of the anisotropic interfacial energy $\energyaniso$ and an area-preserving property of each phase.
Though they can be shown in same ways as in \cite[Proposition~4.1 and 4.2]{E26}, we reproduce their proofs for completeness in terms of
the introduced interfacial operator $\dtn{\Gamma}$.

\begin{proposition}[Energy dissipation]\label{prop:diss}
  Let $\{(\bvp{w}(\cdot,t),\Gamma(t))\}_{t > 0}$ be a smooth solution to \eqref{eq:strong}.
Then, the interfacial energy \eqref{eq:energy} is non-increasing in time.
Precisely, it holds that
\begin{equation}\label{eq:cdiss}
  \ddt\energyaniso(\Gamma(t))
  = -\sum_{p = 0}^{I_P-1}\productdual{\dtn{\Gamma(t)}\chemical{p}}{\chemical{p}}{\Gamma(t)} \leq 0.
\end{equation}
\end{proposition}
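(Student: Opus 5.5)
The argument combines three ingredients: the first variation of the anisotropic energy of a curve network with triple junctions, the Gibbs--Thomson law $\bvp{w}\cdot\jump{\bvp{\chi}}=\curvatureaniso$, and the motion law \eqref{eq:mstefan}.

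\emph{First variation.} I would compute $\ddt\energyaniso(\Gamma(t))$ curve by curve. Parametrize each $\Gamma_i(t)$ by a velocity $\bv{\mathcal V}_i$ whose normal part is $V_i$. The classical formula for positively one-homogeneous $\gamma_i$ (cf.\ \cite{BGN10}) gives
\[
\ddt\int_{\Gamma_i}\gamma_i(\bv\nu_i)\dH{1}=\int_{\Gamma_i}\kappa_{\gamma_i}V_i\dH{1}+\bigl[\bv\xi_i(\bv\nu_i)^{\perp}\cdot\bv{\mathcal V}_i\bigr]_{\text{initial}}^{\text{terminal}}.
\]
One derives it by writing the energy density as $\bv\xi_i(\bv\nu_i)\cdot\bv\nu_i$ and using $\partial_s$ in arc length. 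The boundary term comes from integrating the tangential derivative of $\bv\xi_i\cdot\bv\nu_i$ along the curve, with Euler's relation handling the homogeneity. The sign of the boundary term is exactly $\varepsilon_{k,\ell}$.

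\emph{Junction terms.} Summing over $i$, the endpoint contributions group at each $\junction{k}$ into
\[
\Bigl(\sum_{\ell=1}^3\varepsilon_{k,\ell}\bv\xi_{\indextj{k}{\ell}}(\bv\nu_{\indextj{k}{\ell}})^\perp\Bigr)\cdot\bv{\mathcal V}(\junction{k}).
\]
This uses that the three curves share a common junction velocity, since the network stays connected there. The Young--Herring condition in \eqref{eq:strong} then makes every junction term vanish, so
\[
\ddt\energyaniso=\int_\Gamma\curvatureaniso V\dH{1}.
\]
Closed curves contribute no boundary term.

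\emph{Inserting the laws.} Substituting $\curvatureaniso=\sum_p\chemical{p}\jump{\bvp{\chi}_p}$, which is valid because $\jump{\bvp w}=\bvp 0$ makes the trace of $\chemical{p}$ well defined on $\Gamma$, gives
\[
\ddt\energyaniso=\sum_p\int_\Gamma \chemical{p}\,V\jump{\bvp{\chi}_p}\dH{1}.
\]
By \eqref{eq:mstefan}, $V\jump{\bvp{\chi}_p}=\dtn{\Gamma}\chemical{p}$. Reading the integral as the duality pairing then yields
\[
\ddt\energyaniso=\sum_p\productdual{\dtn{\Gamma}\chemical{p}}{\chemical{p}}{\Gamma}.
\]

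\emph{Sign.} The expected result has a minus sign, so I would recheck the conventions. The motion law in \eqref{eq:strong} is $\jump{\nabla\bvp w}\bv\nu=-V\jump{\bvp\chi}$, and \eqref{eq:mjump} states $\dtn\Gamma\chemical{p}=-\jump{\nabla\chemical p}\cdot\bv\nu$. The sign must be reconciled through the definition of $\kappa_{\gamma_i}=-\sgrad\cdot\bv\xi_i$ relative to the direction of $\bv\nu_i$. The first-variation formula with this curvature sign reads $\ddt\int\gamma=-\int\kappa_\gamma V$ when $\bv\nu$ is oriented as here, in agreement with \cite{BGN10} conventions. I therefore expect $\ddt\energyaniso=-\int_\Gamma\curvatureaniso V\dH1$. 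The final inequality $\le0$ then follows from positive semi-definiteness in Proposition~\ref{prop:dtn-structure}.

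\emph{Main obstacle.} The hardest part is the careful bookkeeping of signs and orientations in the first variation: the curvature convention, the direction of the normal, and the endpoint incidence signs $\varepsilon_{k,\ell}$. This bookkeeping is what guarantees that the junction terms match the contracted Young--Herring form exactly. I would verify it on a single isotropic circle first: shrinking with $V<0$ must decrease length, which fixes the overall sign.
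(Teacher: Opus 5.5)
Your argument is the same as the paper's. The Gibbs--Thomson and motion laws turn $\sum_{p}\productdual{\dtn{\Gamma}\chemical{p}}{\chemical{p}}{\Gamma}$ into $\productdual{V}{\curvatureaniso}{\Gamma}$, and the first-variation formula $\ddt\energyaniso(\Gamma(t))=-\productdual{V}{\curvatureaniso}{\Gamma}$ finishes the proof. The only difference is that the paper cites this formula from \cite{BGN10,EGN26-2}, whereas you sketch it and kill the junction terms with the Young--Herring law.

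Do correct your first display, because its curvature term has the wrong sign. For a parametrization $\bv x(\rho,t)$ of $\Gamma_i(t)$ one has $\gamma_i(\bv\nu_i)\,|\partial_\rho\bv x|=\gamma_i\bigl(-(\partial_\rho\bv x)^{\perp}\bigr)$, which gives $\ddt\int_{\Gamma_i}\gamma_i(\bv\nu_i)\dH{1}=\int_{\Gamma_i}\bv\xi_i(\bv\nu_i)^{\perp}\cdot\partial_{s}\bv{\mathcal V}_i\dH{1}$. Next, $\partial_s\bv\xi_i=-\kappa_{\gamma_i}\bv\tau_i$; it is tangential because $D^2\gamma_i(\bv\nu_i)\bv\nu_i=\bv 0$, so $(\partial_s\bv\xi_i)^{\perp}=\kappa_{\gamma_i}\bv\nu_i$. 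Integrating by parts then yields $-\int_{\Gamma_i}\kappa_{\gamma_i}V_i\dH{1}$ plus exactly your endpoint term. The minus sign you only ``expect'' in your last paragraph is therefore forced, not a matter of convention.
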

\begin{proof}
Using \eqref{eq:mjump}, and the motion law and Gibbs--Thomson law in
\eqref{eq:strong}, we obtain
\begin{equation*}
  \begin{aligned}
\sum_{p = 0}^{I_P-1}\productdual{\dtn{\Gamma(t)}\chemical{p}}{\chemical{p}}{\Gamma(t)}
&= \sum_{p=0}^{I_P-1}
   \productdual{-\jump{\nabla\chemical{p}}\cdot\normal}{\chemical{p}}{\Gamma(t)}
 = \sum_{p=0}^{I_P-1}\productdual{V\jump{\bvp{\chi}_p}}{\chemical{p}}{\Gamma(t)}\\
&= \productdual{V}{\bvp{\chemical{}}\cdot\jump{\bvp{\chi}}}{\Gamma(t)}
 = \productdual{V}{\curvatureaniso}{\Gamma(t)}.
  \end{aligned}
\end{equation*}
Meanwhile, we recall from \cite[Lemma 3.1 and Eq.~(3.4)]{BGN10} and \cite[Eq.~(6)]{EGN26-2} that
\[
\ddt\energyaniso(\Gamma(t)) = -\sum_{i=1}^{I_S}\int_{\Gamma_i(t)}V\curvatureaniso\dH{1} = -\productdual{V}{\curvatureaniso}{\Gamma(t)}.
\]
Combining the above two identities concludes the proof.
\end{proof}

\begin{proposition}[Area conservation per phase]\label{prop:ccons}
  Assume that $\{(\bvp{w}(\cdot,t),\Gamma(t))\}_{0<t\leq T}$ is a smooth solution to \eqref{eq:strong}.
  For each $p = 0,\cdots,I_P-1$, let $A_p(t)$ be the area enclosed by the regions associated with the phase $p$ at time $t$, that is,
  \[
  A_p(t) := \sum_{\substack{1\leq r\leq I_R\\r\in\phase{p}}}\left|\region{r}(t)\right|.
  \]
Then, it holds that
\[
\ddt A_p(t) = 0\qquad\text{for all}\quad p = 1,\cdots,I_P-1.
\]
Here, we have excluded the case of the phase $0$ from the statement since $A_0(t) = \infty$.
\end{proposition}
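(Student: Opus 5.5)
The plan is to express $\ddt A_p$ as a boundary integral of the normal velocity and then invoke the compatibility condition \eqref{eq:stefan-compatibility}, which was derived from the symmetry of $\dtn{\Gamma}$ and $\dtn{\Gamma}1=0$ in Proposition~\ref{prop:dtn-structure}.

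First, fix $p\in\{1,\ldots,I_P-1\}$ and note that every region $\region{r}$ with $r\in\phase{p}$ is bounded, since only $\region{I_R}$ is unbounded and it belongs to $\phase{0}$. For a bounded region moving smoothly, the Reynolds transport theorem gives
\[
\ddt|\region{r}(t)|=\int_{\partial\region{r}(t)}V_{\region{r}}\dH{1},
\]
where $V_{\region{r}}$ is the normal velocity of $\partial\region{r}$ in the direction of its outward normal $\normal_{\region{r}}$. The corners of $\partial\region{r}$ at triple junctions form a finite set, so they do not affect the integral. The one point needing care is that a piecewise-smooth boundary with corners still satisfies the transport formula; I would handle this by the standard approximation, or by noting that the Lipschitz domain deforms smoothly.

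Next, I decompose each $\partial\region{r}$ into pieces of the curves $\Gamma_i$ and sum over $r\in\phase{p}$. On a curve $\Gamma_i$ adjacent to phase $p$ the sign works as follows. If phase $p$ lies on the $\indexphasefrom{i}$ side, then the outward normal of the $p$-region is $+\normal_i$, so $V_{\region{r}}=V_i$; in this case $\jump{\bvp{\chi}_p}=0-1=-1$. If phase $p$ lies on the $\indexphaseto{i}$ side, then $V_{\region{r}}=-V_i$ and $\jump{\bvp{\chi}_p}=+1$. Curves with both sides in phase $p$ do not occur, because $\Gamma_i$ separates distinct phases. Curves not adjacent to $p$ contribute zero, and this agrees with $m_{p,i}=0$. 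Therefore
\[
\ddt A_p(t)=-\sum_{i=1}^{I_S}\int_{\Gamma_i(t)}\jump{\bvp{\chi}_p}V_i\dH{1}=-\int_{\Gamma(t)}\jump{\bvp{\chi}_p}V\dH{1}.
\]

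Finally, I apply the motion law \eqref{eq:mstefan} together with Proposition~\ref{prop:dtn-structure}:
\[
\int_\Gamma\jump{\bvp{\chi}_p}V\dH{1}=\productdual{\dtn{\Gamma}\chemical{p}}{1}{\Gamma}=\productdual{\dtn{\Gamma}1}{\chemical{p}}{\Gamma}=0.
\]
This is exactly \eqref{eq:stefan-compatibility}, so $\ddt A_p=0$.

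The main obstacle is the sign bookkeeping in the middle step, namely matching the orientation of $\normal_i$ against the outward normals of the regions. The transport theorem at the corners is the only analytic subtlety, and smoothness of the solution makes it routine.
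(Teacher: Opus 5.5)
Your proof is correct and follows essentially the route the paper takes: the paper notes that the compatibility condition \eqref{eq:stefan-compatibility}, obtained from the symmetry of $\dtn{\Gamma}$ and $\dtn{\Gamma}1=0$, is for $p\ge1$ exactly the per-phase area conservation, and it defers the remaining details to \cite[Proposition~4.2]{E26}. Your transport formula and the sign check $V_{\region{r}}=-\jump{\bvp{\chi}_p}V_i$ supply precisely that deferred step.
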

\begin{proof}
  This can be shown by a same way as \cite[Proposition 4.2]{E26}.
\end{proof}

\section{Boundary-reduced weak formulation}\label{sec:weak}

We now present a boundary-reduced weak formulation of \eqref{eq:strong}.
To this end, define the following boundary element spaces:
\begin{equation}
  \label{eq:weak-bes-1}
 \mathbb S_\Gamma:=H^{\frac12}(\Gamma),
 \qquad
 \mathbb S_\Gamma^{I_P}:=[H^{\frac12}(\Gamma)]^{I_P},
 \qquad
 \mathbb K_\Gamma:=\mathcal X_\Gamma^{0},
\end{equation}
and define the junction-compatible geometric space:
\begin{equation}
  \label{eq:weak-bes-2}
 \mathbb X_\Gamma
 :=\left\{\bv\eta=(\bv\eta_i)_{i=1}^{I_S}
 \in\prod_{i=1}^{I_S}H^1(\Gamma_i;\R^2)\biggm|
 \bv\eta_{\indextj{k}{1}}(\junction{k})
 =\bv\eta_{\indextj{k}{2}}(\junction{k})
 =\bv\eta_{\indextj{k}{3}}(\junction{k}),\quad 1\leq \forall k\leq I_T
 \right\}.
\end{equation}
The phase--interface incidence operator and its adjoint are
\[
 \begin{aligned}
 \mathcal D_\Gamma^*:\mathbb S_\Gamma^{I_P}&\longrightarrow\mathcal X_\Gamma^{\frac12},
 &\mathcal D_\Gamma^*\bvp\phi&:=\sum_{p=0}^{I_P-1}M_p\project{\Gamma}\phi_p,\\
 \mathcal D_\Gamma:\mathcal X_\Gamma^{-\frac12}&\longrightarrow
 [H^{-\frac12}(\Gamma)]^{I_P},
 &(\mathcal D_\Gamma V)_p&:=\project{\Gamma}^{*}M_pV,
 \end{aligned}
\]
where
\[
 \sum_{p=0}^{I_P-1}
 \productdual{(\mathcal D_\Gamma V)_p}{\phi_p}{\Gamma}
 =\productdual{V}{\mathcal D_\Gamma^*\bvp\phi}{\Gamma}.
\]
Here, the left-hand pairing is that of $H^{-\frac12}(\Gamma)$ with $H^{\frac12}(\Gamma)$ and
the right-hand one is the arc-wise pairing of \eqref{eq:arcwise-spaces}, so that the
identity is exactly the statement that $\mathcal D_\Gamma$ is the adjoint of
$\mathcal D_\Gamma^*$.

Since $\ker\dtn{\Gamma}=\operatorname{span}\{1\}$ (see Proposition~\ref{prop:dtn-structure}),
fix the remaining common additive
constant by
\[
 \mathbb W_\Gamma
 :=\left\{\bvp w\in\mathbb S_\Gamma^{I_P}:
 \mathfrak g_\Gamma(\bvp w)=0\right\}
 \qquad\text{with}\quad
 \mathfrak g_\Gamma(\bvp w)
 :=\sum_{p=0}^{I_P-1}\int_\Gamma w_p\,\dH{1}.
\]
Define
\begin{equation}\label{eq:continuous-anisotropic-form}
 a_{\boldsymbol\gamma,\Gamma}(\mathcal I,\bv\eta)
 :=\sum_{i=1}^{I_S}\int_{\Gamma_i}
 \bv\xi_i(\bv\nu_i)^\perp\cdot\partial_{s_i}\bv\eta_i\,\dH{1},
\end{equation}
where $\partial_{s_i}$ is taken in the direction of $\bv\tau_i$ fixed above.
Under these notations,
the boundary-reduced mixed problem corresponding to \eqref{eq:strong} is to seek
\[
 (\bvp w,\kappa,V)
 \in\mathbb W_\Gamma\times\mathbb K_\Gamma\times\mathcal X_\Gamma^{-\frac12}
\]
such that
\begin{align}
 \sum_{p=0}^{I_P-1}
 \productdual{\dtn{\Gamma}w_p}{\phi_p}{\Gamma}
 -\productdual{V}{\mathcal D_\Gamma^*\bvp\phi}{\Gamma}
 &=0
 &&\forall\bvp\phi\in\mathbb S_\Gamma^{I_P},
 \label{eq:continuous-reduced-field}\\
 \productll{\mathcal D_\Gamma^*\bvp w-\kappa}{\psi}{\Gamma}&=0
 &&\forall\psi\in\mathbb K_\Gamma,
 \label{eq:continuous-reduced-gibbs}\\
 \sum_{i=1}^{I_S}\int_{\Gamma_i}
 \kappa_i\bv\nu_i\cdot\bv\eta_i\,\dH{1}
 +a_{\boldsymbol\gamma,\Gamma}(\mathcal I,\bv\eta)&=0
 &&\forall\bv\eta\in\mathbb X_\Gamma.
 \label{eq:continuous-reduced-curvature}
\end{align}
The first two equations follow by testing the trace equations \eqref{eq:mstefan}
against $\phi_p\in\mathbb S_\Gamma$
in the duality pairing $\productdual{\cdot}{\cdot}{\Gamma}$, and
the Gibbs--Thomson law in \eqref{eq:strong} against $\psi\in\mathbb K_\Gamma$ in $L^2(\Gamma)$.
The mean gauge defining $\mathbb W_\Gamma$ also recovers the pointwise
zero-sum gauge in \eqref{eq:strong}: choosing
$\bvp\phi=(\phi,\ldots,\phi)$ in the first equation and using
$\sum_pM_p=0$ yields
$\dtn{\Gamma}(\sum_pw_p)=0$.  Thus,
$\sum_pw_p=c$ by $\ker\dtn{\Gamma}=\operatorname{span}\{1\}$, and
$\mathfrak g_\Gamma(\bvp w)=0$ implies $c=0$.
To show \eqref{eq:continuous-reduced-curvature}, we recall the continuous weak
curvature identity of \cite[Lemma~3.1 and Eq.~(3.4)]{BGN10} and
\cite[Eq.~(17c)]{EGN26-2}.  For any $\bv\eta\in\mathbb X_\Gamma$,
curvewise integration by parts and
$\kappa_{\gamma_i}=-\sgradindex{i}\cdot\bv\xi_i(\bv\nu_i)$ give
\begin{align*}
 \sum_{i=1}^{I_S}\int_{\Gamma_i(t)}
   \kappa_{\gamma_i}\bv\nu_i\cdot\bv\eta_i\,\dH{1}
 +a_{\boldsymbol\gamma,\Gamma(t)}(\mathcal I,\bv\eta)
 &=\sum_{k=1}^{I_T}\sum_{\ell=1}^{3}
   \varepsilon_{k,\ell}\,
   \bv\eta_{\indextj{k}{\ell}}(\junction{k})\cdot
   \bv\xi_{\indextj{k}{\ell}}
   \bigl(\bv\nu_{\indextj{k}{\ell}}\bigr)^\perp\\
 &=\sum_{k=1}^{I_T}\bv\eta(\junction{k})\cdot
   \sum_{\ell=1}^{3}\varepsilon_{k,\ell}\,
   \bv\xi_{\indextj{k}{\ell}}
   \bigl(\bv\nu_{\indextj{k}{\ell}}\bigr)^\perp
 =0.
\end{align*}
The last equality follows from the junction compatibility of $\bv\eta$ and the
oriented Young--Herring law in \eqref{eq:strong}.

\section{Discrete interfacial operator}
\label{sec:dio}

We first give the continuous boundary-integral representation.  For a region $\region{}$,
set $\Gamma_{\region{}}:=\partial \region{}$ and let $\bv\nu_{\region{}}$ be its outward unit normal.
We use this normal in both the boundary-integral operators and the regional
Dirichlet-to-Neumann map.  In particular, for the unbounded region,
$\bv\nu_{\region{}}$ points into its bounded complement.
Let $G$ be the fundamental solution of the planar Laplacian defined by
\[
G(\bv{x})=-\frac{1}{2\pi}\log|\bv{x}|\qquad\text{for}\quad \bv{x}\neq \bv{0},
\]
and define the single-layer, double-layer, and hypersingular operators
$\operarorsingle{\region{}}:H^{-\frac12}(\Gamma_{\region{}})\to H^{\frac12}(\Gamma_{\region{}})$, $\operarordouble{\region{}}:H^{\frac12}(\Gamma_{\region{}})\to H^{\frac12}(\Gamma_{\region{}})$, and $\operarorhypersingular{\region{}}:H^{\frac12}(\Gamma_{\region{}})\to H^{-\frac12}(\Gamma_{\region{}})$ on $\Gamma_{\region{}}$ by
\begin{equation}\label{eq:bie-operators}
\begin{aligned}
  (\operarorsingle{\region{}}\psi)(\bv{x})
    &:=\int_{\Gamma_{\region{}}}G(\bv{x}-\bv{y})\psi(\bv{y})\,\dH{1}(\bv{y})&&\sentence{for}\psi\in H^{-\frac12}(\Gamma_{\region{}}),\\
  (\operarordouble{\region{}}\phi)(\bv{x})
    &:=\int_{\Gamma_{\region{}}}\frac{\partial G}{\partial \bv\nu_{\region{}}(\bv y)}(\bv{x}-\bv{y})\phi(\bv{y})\,\dH{1}(\bv{y})&&\sentence{for}\phi\in H^{\frac12}(\Gamma_{\region{}}),\\
  (\operarorhypersingular{\region{}}\phi)(\bv{x})&:=-\frac{\partial \operarordoubleregional{\region{}}}{\partial \bv\nu_{\region{}}(\bv x)}\phi(\bv{x})&&\sentence{for}\phi\in H^{\frac12}(\Gamma_{\region{}}).
\end{aligned}
\end{equation}
Here, $\operarordoubleregional{\region{}}$ denotes the double-layer potential given by
the same integral as $\operarordouble{\region{}}$ for $\bv x\notin\Gamma_{\region{}}$;
the normal derivative defining $\operarorhypersingular{\region{}}$ is understood as its boundary trace.
The Calder\'on relations give the symmetric representation of the interior regional
Dirichlet-to-Neumann map (see \cite[Eq.~(6.43)]{S08})
\begin{equation}\label{eq:regional-dtn-bie}
  \sol{\region{}}^{-}
  =\operarorhypersingular{\region{}}+
   \operatorJ{\region{}}^*
   \operarorsingle{\region{}}^{-1}
   \operatorJ{\region{}}
   \sentence{with}
   \operatorJ{\region{}} := \tfrac12\identitymapbdd+\operarordouble{\region{}}.
\end{equation}
The continuous Maue identity is
\begin{equation}\label{eq:maue-continuous}
  \productdual{\operarorhypersingular{\region{}}\phi}{\psi}{\Gamma_{\region{}}}
  =\productdual{\operarorsingle{\region{}}\partial_s\phi}{\partial_s\psi}{\Gamma_{\region{}}}
  \sentence{for all}\phi,\psi\in H^{\frac12}(\Gamma_{\region{}}),
\end{equation}
where $\partial_s$ is the scalar tangential derivative with respect to the
arc-length of $\Gamma_{\region{}}$, taken in the direction of $\bv\tau_{\region{}}
:=\bv\nu_{\region{}}^{\perp}$ (see \cite[Theorem~6.15]{S08}).

For the unbounded region, let $\mathcal C_{\region{}}$ denote the zero-total-charge
inverse of $\operarorsingle{\region{}}$: for $g\in H^{1/2}(\Gamma_{\region{}})$,
$\rho=\mathcal C_{\region{}}g$ is the first component of the following saddle system,
where $q_{\region{}}\rho:=\left\langle\rho,\mathbf1\right\rangle_{\Gamma_{\region{}}}$:
\begin{equation}\label{eq:exterior-constrained-continuous}
  \begin{pmatrix}
    \operarorsingle{\region{}}& \mathbf1\\
    q_{\region{}} & 0
  \end{pmatrix}
  \begin{pmatrix}\rho\\c\end{pmatrix}
  =\begin{pmatrix}g\\0\end{pmatrix}.
\end{equation}
Thus, $q_{\region{}}\rho=0$.
The exterior map returning the outward flux $\partial_{\bv\nu_{\region{}}}u$ is
\begin{equation}\label{eq:regional-dtn-exterior-bie}
  \sol{\region{}}^{+}
  =\operarorhypersingular{\region{}}+
   \operatorJ{\region{}}^*\mathcal C_{\region{}}\operatorJ{\region{}}.
\end{equation}
These continuous regional maps assemble to $\dtn{\Gamma}$ in \eqref{eq:dtn}.

We now pass to their Galerkin realization.  Let $\curvepolygon{}$ be the polygonal network and let
$\totaln$ denote the number of its distinct vertices on $\curvepolygon{}$, so that all global vertex vector
belongs to $\R^{2\totaln}$. For each curve $\curvepolygon{i}$,
let $N_i$ be the number of all vertices on $\curvepolygon{i}$,
including both endpoints when $\Gamma_i$ is open and counting the periodic endpoint only once
when it is closed. We set
\[
  N_a:=\sum_{i=1}^{I_S}N_i,
\]
and thus junction vertices are counted only once in $\totaln$ and once on every incident curve in $N_a$;
in particular, for a network containing only triple junctions,
$N_a=\totaln+2I_T$ holds.

Enumerate the distinct vertices of $\curvepolygon{}$ as $\{\bv X_j\}_{j=1}^{\totaln}$ and the vertices of
$\curvepolygon{i}\,(1\leq i\leq I_S)$ in curve order as $\{\bv X_{i,k}\}_{k=1}^{N_i}$.  Let
\[
  \iota_i:\{1,\ldots,N_i\}\longrightarrow\{1,\ldots,\totaln\}
\]
be the injective local-to-global index map determined by
$\bv X_{i,k}=\bv X_{\iota_i(k)}$.  Different maps $\iota_i$ take the same value precisely at a
vertex shared by the corresponding three curves at triple junction points.
For each $1\leq i\leq I_S$, define the curve-restriction matrix
$P_i\in\R^{N_i\times\totaln}$ component-wise by
\begin{equation}\label{eq:curve-restriction}
  (P_i)_{kj}:=\delta_{\iota_i(k),j},
  \qquad 1\le k\le N_i,\quad 1\le j\le\totaln.
\end{equation}
Define the stacked restriction matrix
\[
  P_h:=
  \begin{pmatrix}P_1\\ \vdots\\ P_{I_S}\end{pmatrix}
  \in\R^{N_a\times\totaln}.
\]
Using the phase--curve signs $m_{p,i}$ of \eqref{eq:Mp}, set
\[
  M_{p,h}:=\operatorname{diag}
  \bigl(m_{p,1}\identitymatrix{N_1},\ldots,
        m_{p,I_S}\identitymatrix{N_{I_S}}\bigr)
  \in\R^{N_a\times N_a},
\]
and define the discrete signed phase--interface incidence matrix by
\begin{equation}\label{eq:discrete-incidence}
  \mathcal D_h
  :=
  \begin{pmatrix}
    P_h^{\top}M_{0,h}\\
    \vdots\\
    P_h^{\top}M_{I_P-1,h}
  \end{pmatrix}
  \in\R^{I_P\totaln\times N_a}.
\end{equation}
For each $0\leq p\leq I_P-1$, let $\checmicalDisc{p}$ be the vector of nodal values of $\chemical{p}$.
Then, for $\checmicalDiscVec{}=(\checmicalDisc{0};\ldots;\checmicalDisc{I_P-1})\in\R^{I_P\totaln}$, we have
\begin{equation}\label{eq:discrete-incidence-action}
  \mathcal D_h^{\top}\checmicalDiscVec{}
  =\sum_{p=0}^{I_P-1}M_{p,h}P_h\checmicalDisc{p}\in\R^{N_a},
\end{equation}
whose restriction to an interface $\curvepolygon{i}$
oriented from phase $p$ to phase $q$ corresponds to the nodal
jump $\checmicalDisc{q} - \checmicalDisc{p}$.
Moreover, we have
\begin{equation}
  \label{eq:mph-sum-zero}
\sum_{p=0}^{I_P-1} M_{p,h}= O \in\R^{N_a\times N_a}
\end{equation}
since each interface has two adjacent phases with
opposite incidence signs.
For consecutive vertices of $\curvepolygon{i}$, set
$h_{i,k}:=|\bv X_{i,k+1}-\bv X_{i,k}|$.  Define the lumped weight at its $k$-th vertex by
\begin{equation}
  \label{eq:arcwise-mass}
  \mu_{i,k}:=
  \begin{cases}
    \tfrac12(h_{i,k-1}+h_{i,k}) & \qquad\text{if}\quad\curvepolygon{i}\text{ is closed},\\[2pt]
    \tfrac12h_{i,1} & \qquad\text{if}\quad\curvepolygon{i}\text{ is open and }k=1,\\[2pt]
    \tfrac12(h_{i,k-1}+h_{i,k}) & \qquad\text{if}\quad\curvepolygon{i}\text{ is open and }1<k<N_i,\\[2pt]
    \tfrac12h_{i,N_i-1} & \qquad\text{if}\quad\curvepolygon{i}\text{ is open and }k=N_i,
  \end{cases}
\end{equation}
where the indices are periodic in the closed case, namely we use the convention that
$\bv X_{i,N_i+1} = \bv X_{i,1}$ and $\bv X_{i,0} = \bv X_{i,N_i}$.
The arc-wise lumped mass matrix is defined by
\begin{equation}\label{eq:arcwise-lumped-mass}
  \Mv:=\operatorname{diag}
  \bigl(\mu_{1,1},\ldots,\mu_{1,N_1},\ldots,
        \mu_{I_S,1},\ldots,\mu_{I_S,N_{I_S}}\bigr)
  \in\R^{N_a\times N_a}.
\end{equation}

For a region $\region{}$, let
$\curvepolygon{\region{}}$ denote the polygonal approximation of
$\partial \region{}$ with the orientation induced by
the outward normal $\bv\nu_{\region{}}$, let
$\edges{R}{}$ be the set of all edges forming $\curvepolygon{\region{}}$,
and for $k \in\{0,1\}$, let $\mathbb P_k(E)$ denote the polynomials of degree
at most $k$ on $E\in\edges{R}{}$.  Set
\begin{equation*}
  \begin{aligned}
  \spaceAffineGalerkin{\region{}} &:=\left\{v_h\in C(\curvepolygon{\region{}})\biggm|v_h\lfloor_{E}\in\mathbb P_1(E)
    \ \text{for all }E\in\edges{R}{}\right\},\\
  \spaceConstGalerkin{\region{}} &:=\left\{\psi_h\in L^2(\curvepolygon{\region{}})\biggm|\psi_h\lfloor_E\in\mathbb P_0(E)
    \ \text{for all }E\in\edges{R}{}\right\}.
  \end{aligned}
\end{equation*}
Let $N_X = N_X(\region{}):=\dim \spaceAffineGalerkin{\region{}}$
and $N_Y = N_Y(\region{}):=\dim \spaceConstGalerkin{\region{}}$, and enumerate the vertices and edges as
$\{\bv{x}_j\}_{j=1}^{N_X}$ and $\{E_i\}_{i=1}^{N_Y}$.
In the sequel, we suppress the arguments of $N_X(\cdot)$ and $N_Y(\cdot)$ unless it is possibly confusing for simplicity.
Let
$\{\varphi_j\}_{j=1}^{N_X}$ be the nodal basis of $\spaceAffineGalerkin{\region{}}$, defined by
$\varphi_j(\bv{x}_k)=\delta_{jk}$ (Kronecker's delta), and let
$\{\chi_i\}_{i=1}^{N_Y}$ be the edgewise-constant basis of $\spaceConstGalerkin{\region{}}$, defined by
$\chi_i=\mathbf{1}_{E_i}$.
We now introduce the matrices
$\matrixsingle{\region{}}\in\R^{N_Y\times N_Y}$, $\matrixdouble{\region{}}\in\R^{N_Y\times N_X}$, $\matrixhypersingular{\region{}}\in\R^{N_X\times N_X}$, and $\matrixcross{\region{}}\in\R^{N_Y\times N_X}$ as follows:
\begin{equation}\label{eq:bem-matrices}
\begin{aligned}
  (\matrixsingle{\region{}})_{ij}&:=\productdual{\chi_i}{\operarorsingle{\region{}}\chi_j}{\curvepolygon{\region{}}},&
  (\matrixdouble{\region{}})_{ij}&:=\productdual{\chi_i}{\operarordouble{\region{}}\varphi_j}{\curvepolygon{\region{}}},\\
  (\matrixhypersingular{\region{}})_{ij}&:=\productdual{\varphi_i}{\operarorhypersingular{\region{}}\varphi_j}{\curvepolygon{\region{}}},&
  (\matrixcross{\region{}})_{ij}&:=\productll{\chi_i}{\varphi_j}{\curvepolygon{\region{}}}.
\end{aligned}
\end{equation}
Let $\matrixD{\region{}}\in\R^{N_Y\times N_X}$ be the coefficient matrix of the
element-wise tangential derivative, defined by
\begin{equation}\label{eq:tangential-derivative-matrix}
  \partial_s\varphi_j=\sum_{i=1}^{N_Y}(\matrixD{\region{}})_{ij}\chi_i
  \qquad\text{for}\quad 1\leq j \leq N_X.
\end{equation}
For the bounded region, the symmetric Galerkin discretization of \eqref{eq:regional-dtn-bie} is
therefore
\begin{equation}\label{eq:S}
  \solintGalerkin{\region{}}
  :=\matrixhypersingular{\region{}}+(\matrixJ{\region{}})^{\top}(\matrixsingle{\region{}})^{-1}\matrixJ{\region{}}
  \with
  \matrixJ{\region{}}:=\frac12 \matrixcross{\region{}}+ \matrixdouble{\region{}},
\end{equation}
when $\matrixsingle{\region{}}$ is invertible,
and the Galerkin realization of \eqref{eq:maue-continuous} is
\begin{equation}\label{eq:maue-discrete}
  \matrixhypersingular{\region{}}=(\matrixD{\region{}})^{\top}\matrixsingle{\region{}}\matrixD{\region{}}.
\end{equation}

Meanwhile, for the unbounded region, set
$\matrixl{\region{}}:=\matrixcross{\region{}}\vectorunit{N_X}$ and let
$\matrixC{\region{}}y$ be the first component of the solution of the discrete counterpart
of \eqref{eq:exterior-constrained-continuous},
\begin{equation}\label{eq:exterior-constrained-inverse-saddle}
 \begin{pmatrix}
  \matrixsingle{\region{}} & \matrixl{\region{}}\\
  (\matrixl{\region{}})^{\top} & 0
 \end{pmatrix}
 \begin{pmatrix}\rho\\ c\end{pmatrix}
 =\begin{pmatrix}y\\ 0\end{pmatrix}.
\end{equation}
This system is uniquely solvable with no condition on
$\operatorname{cap}(\curvepolygon{\region{}})$, see Lemma~\ref{lem:zero-mean} below, and it is
the form we use for the exterior block.  Eliminating $c$ recovers the closed form:
\begin{equation}\label{eq:exterior-constrained-inverse}
  \matrixC{\region{}}= (\matrixsingle{\region{}})^{-1}
  -\frac{(\matrixsingle{\region{}})^{-1}\matrixl{\region{}}(\matrixl{\region{}})^{\top}(\matrixsingle{\region{}})^{-1}}
  {(\matrixl{\region{}})^{\top}(\matrixsingle{\region{}})^{-1}\matrixl{\region{}}},
\end{equation}
when $\matrixsingle{\region{}}$ is invertible. Define
\[
  \solextGalerkin{\region{}}:=\matrixhypersingular{\region{}}
  +(\matrixJ{\region{}})^{\top}\matrixC{\region{}}\matrixJ{\region{}}.
\]
Then, we have
\begin{equation}
  \label{eq:zero-total-charge-h}
\matrixC{\region{}}\matrixl{\region{}}=0.
\end{equation}

Let $\projGalerkin{\region{}}$ be the restriction of a global nodal trace on $\curvepolygon{}$ to $\spaceAffineGalerkin{\region{}}$, and set

\[
\solGalerkin{\region{}} =\begin{cases}
  \solintGalerkin{\region{}}\qquad&\text{if}\quad \region{}\ \text{is bounded,}\\
  \solextGalerkin{\region{}}\qquad&\text{otherwise}.
\end{cases}
\]
The discrete counterpart of
\eqref{eq:dtn} is now defined by
\begin{equation}\label{eq:discrete-dtn}
  \dtn{\curvepolygon{}}:=\sum_{\ell=1}^{I_R}(\projGalerkin{\region{\ell}})^{\top}\solGalerkin{\region{\ell}}\projGalerkin{\region{\ell}}.
\end{equation}

Before proving that the matrix $\dtn{\curvepolygon{}}$ is positive semi-definite,
we prepare a quantity of an interface $\Gamma$.

\begin{definition}[Logarithmic energy and capacity]
  \label{dfn:log-energy-capacity}
For a Borel measure $\mu$ on $\R^2$, the logarithmic energy of $\mu$ is defined by
\[
I(\mu) := 2\pi\int_{\R^2}\int_{\R^2}G(\bv x - \bv y)\dmu(\bv x)\dmu(\bv y).
\]
Let $\Gamma\subset\R^2$ be a compact boundary and
$\mathcal P(\Gamma)$ the set of all probability measures $\mu$ on $\R^2$ such that $\operatorname{supp}\mu\subseteq\Gamma$.
Then, the logarithmic capacity of $\Gamma$ is defined by
\[
  \operatorname{cap}(\Gamma)
  :=\exp\!\left(
  -\inf_{\mu\in\mathcal P(\Gamma)}
  I(\mu)
  \right).
\]
\end{definition}

With the aid of the logarithmic capacity introduced above,
we recall a sufficient condition to guarantee the positive definiteness of the Galerkin matrix of the single-layer operator.
\begin{proposition}
  \label{prop:capacity}
Assume that $\operatorname{cap}(\curvepolygon{\region{}})<1$. Then,
the single-layer operator $\operarorsingle{\region{}}$ on $H^{-1/2}(\curvepolygon{\region{}})$ is coercive.
Consequently,
its Galerkin matrix $\matrixsingle{\region{}}$ is symmetric and positively definite.
\end{proposition}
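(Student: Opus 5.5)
The plan is to prove coercivity of $\operarorsingle{\region{}}$ on $H^{-1/2}(\curvepolygon{\region{}})$ from the classical characterisation of the kernel of the planar single layer through the logarithmic capacity, and then pass to the Galerkin matrix by restriction to the subspace $\spaceConstGalerkin{\region{}}$. The approach follows \cite[Section~6.6]{S08} and \cite{HsiaoWendland08}.

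\textbf{Step 1: the equilibrium density and its sign.}
For a closed Lipschitz curve $\Gamma=\curvepolygon{\region{}}$, consider the saddle system \eqref{eq:exterior-constrained-continuous} with $g=0$ and the constraint replaced by total charge one. Its solution $w_{eq}$ is the equilibrium density, and $\operarorsingle{\region{}}w_{eq}\equiv c_\Gamma$ is constant on $\Gamma$. With the normalisation $G=-\frac1{2\pi}\log|\cdot|$, the constant equals $c_\Gamma=-\frac1{2\pi}\log\operatorname{cap}(\Gamma)$. This follows from Definition~\ref{dfn:log-energy-capacity}: the equilibrium measure $w_{eq}\,\dH{1}$ minimises $I(\mu)$ over $\mathcal P(\Gamma)$, and
\[
I(w_{eq}\,\dH{1})=2\pi\,\productdual{\operarorsingle{\region{}}w_{eq}}{w_{eq}}{\Gamma}=2\pi c_\Gamma .
\]
Hence $\operatorname{cap}(\Gamma)<1$ is equivalent to $c_\Gamma>0$.

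\textbf{Step 2: coercivity.}
Split any $\psi\in H^{-1/2}(\Gamma)$ as $\psi=\psi_0+\alpha w_{eq}$, where $\alpha=\productdual{\psi}{1}{\Gamma}$ and $\psi_0$ has zero total charge. Then:
\begin{itemize}
\item The cross terms vanish, since $\productdual{\operarorsingle{\region{}}w_{eq}}{\psi_0}{\Gamma}=c_\Gamma\productdual{\psi_0}{1}{\Gamma}=0$.
\item By the standard result \cite[Theorem~6.23]{S08}, $\operarorsingle{\region{}}$ is coercive on the zero-charge subspace $H^{-1/2}_0(\Gamma)$, for every curve and independently of capacity. This comes from representing $\productdual{\operarorsingle{\region{}}\psi_0}{\psi_0}{\Gamma}$ as the Dirichlet energy of the single-layer potential on $\R^2$, which decays like $O(|\bv x|^{-1})$ because the total charge is zero.
\end{itemize}
Therefore
\[
\productdual{\operarorsingle{\region{}}\psi}{\psi}{\Gamma}\ge c_0\|\psi_0\|^2_{H^{-1/2}}+c_\Gamma\alpha^2 .
\]
Since $\psi\mapsto(\psi_0,\alpha)$ is an isomorphism and $c_\Gamma>0$, the right-hand side controls $\|\psi\|^2_{H^{-1/2}(\Gamma)}$.

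\textbf{Step 3: the Galerkin matrix.}
Symmetry of $\matrixsingle{\region{}}$ comes from the symmetry of the kernel, $G(\bv x-\bv y)=G(\bv y-\bv x)$. For $\bvp c\in\R^{N_Y}$, set $\psi_h=\sum_i c_i\chi_i\in\spaceConstGalerkin{\region{}}\subset H^{-1/2}$. Then
\[
\bvp c^\top\matrixsingle{\region{}}\bvp c=\productdual{\operarorsingle{\region{}}\psi_h}{\psi_h}{\Gamma}\ge C\|\psi_h\|^2>0
\]
for $\bvp c\neq0$, because the $\chi_i$ are linearly independent.

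The main obstacle is Step 1: it needs the existence and regularity of the equilibrium density on a polygon and its identification with the minimiser of $I$, together with the correct sign and normalisation of $c_\Gamma$. I would cite these rather than reprove them, checking that the factor $2\pi$ in $I(\mu)$ makes $c_\Gamma=-\frac1{2\pi}\log\operatorname{cap}$ exact.
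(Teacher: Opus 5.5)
Your proof is correct and follows the paper's route: first coercivity of $\operarorsingle{\region{}}$ under the capacity condition, then restriction to $\spaceConstGalerkin{\region{}}$ using the linear independence of the $\chi_i$, with symmetry coming from $G(\bv x-\bv y)=G(\bv y-\bv x)$. The only difference is that the paper simply cites \cite[Theorem~6.23]{S08} for the coercivity, while you unpack it through the splitting $\psi=\psi_0+\alpha w_{eq}$ and the identity $c_\Gamma=-\frac{1}{2\pi}\log\operatorname{cap}(\Gamma)$, which makes explicit the cited fact that the minimiser in Definition~\ref{dfn:log-energy-capacity} is $w_{eq}\,\dH{1}$ with $w_{eq}\geq0$; in Step~2, however, cite the zero-charge ellipticity as a result in its own right, not the theorem the paper uses for the full statement.
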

\begin{proof}
  The coercivity of the single-layer operator follows from \cite[pp.~142--143, Theorem~6.23]{S08}.
  Thus, there exists a constant $c_V>0$ such that
  \[
    \productdual{\psi}{\operarorsingle{\region{}}\psi}{\curvepolygon{\region{}}}
    \geq c_V\|\psi\|_{H^{-1/2}(\curvepolygon{\region{}})}^2
    \qquad\text{for every }\psi\in H^{-\frac{1}{2}}(\curvepolygon{\region{}}).
  \]
  For $y=(y_i)_{i=1}^{N_Y}\in\R^{N_Y}$, set
  \[
    \psi_h:=\sum_{i=1}^{N_Y}y_i\chi_i
    \in \spaceConstGalerkin{\region{}}\subset H^{-1/2}(\curvepolygon{\region{}}).
  \]
  By the definition of the Galerkin matrix in \eqref{eq:bem-matrices},
  \[
    y^\top\matrixsingle{\region{}}y
    =\sum_{i,j=1}^{N_Y}y_i y_j
      \productdual{\chi_i}{\operarorsingle{\region{}}\chi_j}{\curvepolygon{\region{}}}
    =\productdual{\psi_h}{\operarorsingle{\region{}}\psi_h}{\curvepolygon{\region{}}}
    \geq c_V\|\psi_h\|_{H^{-1/2}(\curvepolygon{\region{}})}^2.
  \]
  Since $\{\chi_i\}_{i=1}^{N_Y}$ is a basis of $\spaceConstGalerkin{\region{}}$,
  $y \neq 0$ implies $\psi_h\neq 0$, and hence the right-hand side of the above inequality is positive.
  Therefore, we conclude that $\matrixsingle{\region{}}$ is positive definite.
  Moreover, the symmetry of $\matrixsingle{\region{}}$ follows from $G(\bv x-\bv y)=G(\bv y-\bv x)$.
  The proof is now complete.
\end{proof}

We now recall a criterion to satisfy the hypothesis of Proposition~\ref{prop:capacity}, which can be confirmed much easier.
\begin{lemma}
  \label{lem:diam-cap}
If $\operatorname{diam}(\Gamma)<2$, then $\operatorname{cap}(\Gamma) < 1$ holds.
\end{lemma}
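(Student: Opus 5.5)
The plan is to bound the logarithmic energy $I(\mu)$ from below, uniformly over all probability measures supported on $\Gamma$, using only the pointwise bound on the kernel that $\operatorname{diam}(\Gamma)<2$ provides. Set $d:=\operatorname{diam}(\Gamma)<2$. For $\bv x,\bv y\in\Gamma$ we have $|\bv x-\bv y|\leq d$, so
\[
2\pi G(\bv x-\bv y)=-\log|\bv x-\bv y|\geq -\log d\qquad(\bv x\neq\bv y).
\]
When $\bv x=\bv y$ the integrand is $+\infty$, which is consistent with the same lower bound. The integrand is therefore bounded below by a constant on the support of $\mu\otimes\mu$, so $I(\mu)$ is well defined in $(-\infty,+\infty]$. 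Integrating against the probability measure $\mu\otimes\mu$ gives
\[
I(\mu)\geq -\log d
\]
for every $\mu\in\mathcal P(\Gamma)$.

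Taking the infimum over $\mu\in\mathcal P(\Gamma)$ gives $\inf_\mu I(\mu)\geq-\log d$. Hence
\[
\operatorname{cap}(\Gamma)\leq \exp(\log d)=d=\operatorname{diam}(\Gamma).
\]
This is the classical inequality $\operatorname{cap}\leq\operatorname{diam}$, and in fact it can be sharpened to $\operatorname{diam}/2$. The crude bound is enough, though: $d<2$ alone would only give $\operatorname{cap}(\Gamma)<2$, so we need something sharper. To get $\operatorname{cap}(\Gamma)<1$, I would use the sharp classical bound $\operatorname{cap}(\Gamma)\leq\operatorname{diam}(\Gamma)/2$, citing for instance Ransford's \emph{Potential Theory in the Complex Plane}, Theorem 5.3.5. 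This then gives $\operatorname{cap}(\Gamma)\leq d/2<1$.

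The main obstacle is that the elementary kernel estimate above only yields the constant $d$, not $d/2$. The sharp bound needs an extra argument. One route is symmetrization: by Pólya–Szegő, $\operatorname{cap}(K)\leq\operatorname{cap}$ of the disk of diameter $\operatorname{diam}K$. Another route is to observe that $\Gamma$ lies in a closed disk of radius $d$ (centered at any point of $\Gamma$), or, via Jung's theorem, in a disk of radius $d/\sqrt3$. Monotonicity of capacity under inclusion, together with the fact that a disk of radius $R$ has capacity $R$, then gives $\operatorname{cap}(\Gamma)\leq d/\sqrt3<2/\sqrt3$. That is still not below $1$ for $d$ close to $2$, so the Jung route falls short and the diameter-over-two bound is genuinely needed.

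I would therefore cite the classical result $\operatorname{cap}(K)\leq\operatorname{diam}(K)/2$ for compact $K\subset\R^2$, for example Ransford's Theorem 5.3.5. If a self-contained proof is desired, I would give the standard argument via Fekete points and the transfinite diameter: $\operatorname{cap}=\tau(K)$, and Fekete products are compared with those of the segment $[-d/2,d/2]$ after Steiner symmetrization. The final step is then immediate.
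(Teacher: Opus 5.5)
Your main argument is correct and is essentially the paper's proof, which also just invokes the classical bound $\operatorname{cap}(\Gamma)\le\operatorname{diam}(\Gamma)/2$ from the literature. Your elementary estimate $\operatorname{cap}\le\operatorname{diam}$ and the Jung-theorem bound $d/\sqrt3$ are both correct, and you rightly judge them insufficient. However, the sentence ``The crude bound is enough, though'' contradicts that conclusion and should be removed.

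Your optional self-contained sketch would fail as written. The segment $[-d/2,d/2]$ has capacity $d/4$, strictly less than the value $d/2$ attained by the disk of diameter $d$, so no valid comparison with that segment can give the bound. In addition, Steiner symmetrization does not increase logarithmic capacity. It therefore yields lower bounds such as $\operatorname{cap}\ge(\mathrm{area}/\pi)^{1/2}$, not the upper bound you need.

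A correct short proof runs as follows. Replace $\Gamma$ by its convex hull $H$; it has the same diameter, and $\operatorname{cap}(\Gamma)\le\operatorname{cap}(H)$ by monotonicity. Assuming $H$ is not a point, let $g(\zeta)=\operatorname{cap}(H)\,\zeta+b_0+b_1\zeta^{-1}+\cdots$ map $\{|\zeta|>1\}$ conformally onto the complement of $H$. Then apply the maximum principle to $h(\zeta):=\bigl(g(\zeta)-g(-\zeta)\bigr)/(2\zeta)$. This function is analytic on $\{|\zeta|>1\}\cup\{\infty\}$, satisfies $h(\infty)=\operatorname{cap}(H)$, and has $\limsup_{|\zeta|\downarrow1}|h(\zeta)|\le d/2$, which gives $\operatorname{cap}(H)\le d/2$.
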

\begin{proof}
  See \cite[Eq.(1.2)]{BPS02}.

\end{proof}

We note that the capacity condition of Proposition~\ref{prop:capacity} restricts the size of $\curvepolygon{\region{}}$.
The exterior block evaluates the quadratic form of $\matrixsingle{\region{}}$ only at vectors $y$ with
$(\matrixl{\region{}})^{\top}y=0$, and at those points, it is positive regardless of the logarithmic capacity.
\begin{lemma}\label{lem:zero-mean}
Let $\region{}$ be a polygonal region.  Then, it follows that
\begin{equation}\label{eq:ell-identities}
 (\matrixl{\region{}})^{\top}y
 =\int_{\curvepolygon{\region{}}}\psi_h\dH{1}
 \sentence{for}\psi_h=\sum_{i=1}^{N_Y}y_i\chi_i,
 \qquad
 (\matrixl{\region{}})^{\top}\matrixD{\region{}}=\vectorzero{N_X}^{\top}.
\end{equation}
Moreover, it holds that
\begin{equation}\label{eq:zero-mean-matrix}
 y^{\top}\matrixsingle{\region{}}y>0
 \qquad\text{for every}\quad y\in\R^{N_Y}\setminus\{\vectorzero{N_Y}\}
 \quad\text{with}\quad (\matrixl{\region{}})^{\top}y=0,
\end{equation}
and consequently the matrix of \eqref{eq:exterior-constrained-inverse-saddle} is invertible.
\end{lemma}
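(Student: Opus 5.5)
The proof splits into three steps: the two identities in \eqref{eq:ell-identities}, then the positivity \eqref{eq:zero-mean-matrix}, then invertibility of the saddle matrix.

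\textbf{Step 1: the identities \eqref{eq:ell-identities}.} By definition, $(\matrixl{\region{}})_i=\sum_j(\matrixcross{\region{}})_{ij}=\productll{\chi_i}{\sum_j\varphi_j}{\curvepolygon{\region{}}}$. Since the nodal basis is a partition of unity, $\sum_j\varphi_j\equiv1$, so $(\matrixl{\region{}})_i=|E_i|$. Hence $(\matrixl{\region{}})^\top y=\sum_i y_i|E_i|=\int_{\curvepolygon{\region{}}}\psi_h\dH{1}$. For the second identity, apply the first to the coefficient vector of $\partial_s\varphi_j$ given by \eqref{eq:tangential-derivative-matrix}. This yields $(\matrixl{\region{}})^\top\matrixD{\region{}}e_j=\int_{\curvepolygon{\region{}}}\partial_s\varphi_j\dH{1}$. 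Each connected component of $\curvepolygon{\region{}}$ is a closed polygon and $\varphi_j$ is continuous and piecewise affine on it, so the integral telescopes to zero.

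\textbf{Step 2: positivity \eqref{eq:zero-mean-matrix}.} I would use the classical fact that the single-layer operator is $H^{-1/2}$-elliptic on the subspace of zero-mean densities, \emph{regardless} of capacity; see \cite[Theorem~6.23]{S08}, whose proof establishes exactly this first. If that is not directly citable, I would argue by scaling. Choose $\lambda>0$ with $\operatorname{diam}(\lambda\curvepolygon{\region{}})<2$. By Lemma~\ref{lem:diam-cap} and Proposition~\ref{prop:capacity}, the single-layer operator on $\lambda\curvepolygon{\region{}}$ is coercive. Changing variables $\bv x=\lambda\bv x'$ gives $G(\lambda(\bv x'-\bv y'))=G(\bv x'-\bv y')-\frac1{2\pi}\log\lambda$. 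The constant term contributes $-\frac{\log\lambda}{2\pi}\bigl(\int\psi\bigr)^2$, which vanishes for zero-mean $\psi$. Therefore $\productdual{\psi}{\operarorsingle{\region{}}\psi}{}$ equals a positive multiple of the quadratic form on the scaled polygon, and is strictly positive for $\psi\neq0$ with zero mean. Applying this to $\psi_h=\sum_iy_i\chi_i$, which has zero mean when $(\matrixl{\region{}})^\top y=0$ by Step 1, and using that $\psi_h\neq0$ whenever $y\neq0$, gives \eqref{eq:zero-mean-matrix}. The main obstacle is this step: checking that the scaling bookkeeping is exact, i.e.\ that the $\log\lambda$ term only couples through $\int\psi$, so that no hidden capacity condition remains.

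\textbf{Step 3: invertibility of the saddle matrix.} Suppose $\matrixsingle{\region{}}\rho+c\,\matrixl{\region{}}=0$ and $(\matrixl{\region{}})^\top\rho=0$. Multiplying the first equation by $\rho^\top$ gives $\rho^\top\matrixsingle{\region{}}\rho=0$, so $\rho=0$ by Step 2. Then $c\,\matrixl{\region{}}=0$, and since $\matrixl{\region{}}$ has entries $|E_i|>0$, it is nonzero, so $c=0$. The matrix is square, so a trivial kernel implies it is invertible.
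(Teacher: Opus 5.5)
Your proposal is correct. Steps~1 and~3 are essentially the paper's argument: the partition of unity gives $(\matrixl{\region{}})_i=|E_i|$, $\partial_s\varphi_j$ telescopes over each closed component, and you multiply the homogeneous saddle system by $\rho^{\top}$. Step~2 takes a different route.

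The paper proves \eqref{eq:zero-mean-matrix} directly by potential theory. For a zero-mean density $\psi$, the single-layer potential $u$ satisfies $u=O(|\bv x|^{-1})$ and $\nabla u=O(|\bv x|^{-2})$. Green's identity on $B_L(\bv 0)\setminus\curvepolygon{\region{}}$ with $L\to\infty$ then gives $\productdual{\psi}{\operarorsingle{\region{}}\psi}{\curvepolygon{\region{}}}=\int_{\R^2}|\nabla u|^2\dL{2}$. If this vanishes, $u$ is constant, hence $u\equiv0$ by the decay, and $\psi=0$ by the jump relation.

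Your scaling reduction is also sound. Dilation shifts the logarithmic kernel by an additive constant, whose contribution is a multiple of $\bigl(\int\psi\dH{1}\bigr)^2=0$, and the Jacobian factor is positive. So strict positivity on zero-mean densities is scale invariant. It then follows from Proposition~\ref{prop:capacity} and Lemma~\ref{lem:diam-cap} applied to a shrunken copy of the polygon, which is again the boundary of a polygonal region.

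There is a harmless slip in direction. If $\lambda\curvepolygon{\region{}}$ is the small copy, the substitution should be $\bv x=\bv x'/\lambda$ with $\bv x'\in\lambda\curvepolygon{\region{}}$. The sign of the $\log\lambda$ term does not matter, since it multiplies $(\int\psi)^2$.

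Trade-offs: your route is short, reuses results already stated in the paper, and shows transparently that the capacity hypothesis only concerns the constant mode, which is exactly the remark preceding the lemma. However, it rests on the black-box coercivity theorem behind Proposition~\ref{prop:capacity}, whose standard proof is typically derived from the very zero-mean ellipticity you want. It is therefore less self-contained at the level of the underlying theory. The paper's argument needs no capacity theory, and it exhibits the quadratic form as a Dirichlet energy, mirroring the continuous identity \eqref{eq:energy-indentity} for $\dtn{\Gamma}$.
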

\begin{proof}
  The first equation of \eqref{eq:ell-identities} is straightforward from the definition of $\matrixl{\region{}}$.
Since $\{\varphi_j\}_{j=1}^{N_X}$ is a partition of unity on $\curvepolygon{\region{}}$, the definition
of $\matrixcross{\region{}}$ in \eqref{eq:bem-matrices} gives
\begin{equation}
  \label{eq:zero-mean-1}
(\matrixl{\region{}})_i=\int_{E_i}\sum_{j=1}^{N_X}\varphi_j\dH{1}=|E_i|.
\end{equation}
Let $\phi_h\in\spaceAffineGalerkin{\region{}}$ have the coefficient vector $x\in\R^{N_X}$
and let $\bv a_i,\,\bv b_i$ be the endpoints of $E_i$ in the direction of the orientation.
Then, since $\phi_h$ is affine, \eqref{eq:tangential-derivative-matrix} gives
$$|E_i|(\matrixD{\region{}}x)_i=\phi_h(\bv b_i)-\phi_h(\bv a_i),$$
and summing over the edges telescopes on each closed component of $\curvepolygon{\region{}}$ together with \eqref{eq:zero-mean-1} gives
$(\matrixl{\region{}})^{\top}\matrixD{\region{}}x=0$ for every $x\in\R^{N_X}$, and hence the second equation in \eqref{eq:ell-identities} follows.

Suppose that $\psi\in H^{-\frac12}(\curvepolygon{\region{}})$ satisfies
$\productdual{\psi}{1}{\curvepolygon{\region{}}}=0$ and let
$u$ be the single-layer potential of $\psi$, that is,
\begin{equation}\label{eq:zero-mean-potential}
 u(\bv x):=\int_{\curvepolygon{\region{}}}G(\bv x-\bv y)\psi(\bv y)\dH{1}(\bv y),
 \qquad\bv x\in\R^2\setminus\curvepolygon{\region{}}.
\end{equation}
The above $u$ can be extended to the whole space $\R^2$ in the trace sense;
observe that $u$ satisfies the jump relation $\jump{\partial_{\normal_{\region{}}}u}=-\psi$,
where this jump is defined by \eqref{eq:jump} replaced $\normal_i$ with $\normal_{\region{}}$.

Then, we compute
\begin{equation}\label{eq:zero-mean-farfield}
 u(\bv x)= u(\bv x) - \left(-\frac{1}{2\pi}\log|\bv x|\productdual{\psi}{1}{\curvepolygon{\region{}}}\right)
 =-\frac1{2\pi}\int_{\curvepolygon{\region{}}}
 \log\frac{|\bv x-\bv y|}{|\bv x|}\,\psi(\bv y)\dH{1}(\bv y).
\end{equation}
The logarithm in \eqref{eq:zero-mean-farfield} is $O(|\bv x|^{-1})$ uniformly in
$\bv y\in\curvepolygon{\region{}}$, and hence
\begin{equation}\label{eq:zero-mean-decay}
 u(\bv x)=O\left(\frac1{|\bv x|}\right),
 \qquad
 \nabla u(\bv x)=O\left(\frac1{|\bv x|^2}\right)
 \qquad\text{as}\quad|\bv x|\to\infty .
\end{equation}
We now take $L>0$ be so large that $\curvepolygon{\region{}}\subset B_L(\bv 0)$.
Since $u$ is harmonic in $\R^2\setminus\curvepolygon{\region{}}$ and continuous across it,
Green's identity on each
connected component of $B_L(\bv 0)\setminus\curvepolygon{\region{}}$ gives, after summing
the two traces on $\curvepolygon{\region{}}$,
\begin{equation*}
  \begin{aligned}
 \int_{B_L(\bv 0)\setminus\curvepolygon{\region{}}}|\nabla u|^2\dL{2}
 &=-\int_{\curvepolygon{\region{}}}
   \jump{\partial_{\normal_{\region{}}}u}\,u\dH{1}
 +\int_{\partial B_L(\bv 0)}u\,\frac{\partial u}{\partial\normal_{B_L(\bv 0)}}\dH{1}\\
  &= \productdual{\psi}{\operarorsingle{\region{}}\psi}{\curvepolygon{\region{}}} + O\left(\frac1{L^2}\right)
  \sentence{as}L\to\infty.
 \end{aligned}
\end{equation*}
Here, we have invoked that the trace of $u$ equals $\operarorsingle{\region{}}\psi$ and
the bounds given in \eqref{eq:zero-mean-decay}.
Letting $L\to\infty$, we obtain
\begin{equation}
  \label{eq:zero-mean-2}
 \productdual{\psi}{\operarorsingle{\region{}}\psi}{\curvepolygon{\region{}}}
 =\int_{\R^2}|\nabla u|^2\dL{2}\geq0 .
\end{equation}
If the right-hand side of \eqref{eq:zero-mean-2} vanishes, then $\nabla u=\bv 0$ almost
everywhere in $\R^2$.  The continuity of $u$ across $\curvepolygon{\region{}}$ places $u$
in $H^1_{\mathrm{loc}}(\R^2)$, and $\R^2$ is connected, so $u$ must be constant;
the decay condition \eqref{eq:zero-mean-decay} then forces $u\equiv0$.
The jump relation consequently gives $\psi=-\jump{\partial_{\normal_{\region{}}}u}=0$.

Suppose that $y\in\R^{N_Y}\setminus\{\vectorzero{N_Y}\}$ satisfies $(\matrixl{\region{}})^{\top}y=0$.
Then, $\psi_h=\sum_iy_i\chi_i\in\spaceConstGalerkin{\region{}}$ is nonzero and,
by the first identity in \eqref{eq:ell-identities}, has vanishing total charge;
that is,
\[
 \productdual{\psi_h}{1}{\curvepolygon{\region{}}}
 =\int_{\curvepolygon{\region{}}}\psi_h\dH{1}
 =(\matrixl{\region{}})^{\top}y=0 .
\]
Meanwhile, the definition of $\matrixsingle{\region{}}$ in \eqref{eq:bem-matrices} gives
\[
 y^{\top}\matrixsingle{\region{}}y
 =\sum_{i=1}^{N_Y}\sum_{j=1}^{N_Y}y_iy_j
  \productdual{\chi_i}{\operarorsingle{\region{}}\chi_j}{\curvepolygon{\region{}}}
 =\productdual{\psi_h}{\operarorsingle{\region{}}\psi_h}{\curvepolygon{\region{}}}.
\]
By the previous argument, noting that $\productdual{\psi_h}{1}{\curvepolygon{\region{}}} = 0$,
the right-hand side of the above inequality is positive thanks to the identity of \eqref{eq:zero-mean-2};
otherwise it must hold that $\psi_h = 0$, and this is a contradiction.
Therefore, \eqref{eq:zero-mean-matrix} follows.

It remains to prove that the matrix of \eqref{eq:exterior-constrained-inverse-saddle} is
invertible.  Let $(\rho,c)\in\R^{N_Y}\times\R$ be annihilated by it, that is,
\[
 \matrixsingle{\region{}}\rho+c\,\matrixl{\region{}}=\vectorzero{N_Y},
 \qquad
 (\matrixl{\region{}})^{\top}\rho=0.
\]
Multiplying the first equation by $\rho^{\top}$ from the left and inserting the second one,
we obtain
\[
 \rho^{\top}\matrixsingle{\region{}}\rho
 =-c\,(\matrixl{\region{}})^{\top}\rho=0.
\]
Since $\rho$ satisfies $(\matrixl{\region{}})^{\top}\rho=0$, the inequality
\eqref{eq:zero-mean-matrix} forces $\rho=\vectorzero{N_Y}$.
The first equation then reads
$c\,\matrixl{\region{}}=\vectorzero{N_Y}$, and $\matrixl{\region{}}\neq\vectorzero{N_Y}$ by
\eqref{eq:zero-mean-1}, whence $c=0$.  The proof is now complete.
\end{proof}

\begin{lemma}\label{lem:discrete-regional-dtn-structure}
Let $\region{}$ be a connected polygonal region whose boundary is a finite disjoint
union of simple closed polygons, and suppose, in the case that $\region{}$ is
bounded, that $\matrixsingle{\region{}}$ is symmetric
positive definite.
Then, for either the interior or the exterior block defined in
\eqref{eq:S} and \eqref{eq:exterior-constrained-inverse-saddle}, it follows that
\begin{subequations}\label{eq:discrete-sol-structure}
  \begin{align}
    (\solGalerkin{\region{}})^{\top} &= \solGalerkin{\region{}}\label{eq:discrete-sol-structure-a},\\
    x^\top \solGalerkin{\region{}} x &\ge 0\label{eq:discrete-sol-structure-b},\\
  \ker \solGalerkin{\region{}} &= \operatorname{span}\{\vectorunit{N_X}\}\label{eq:discrete-sol-structure-c}.
  \end{align}
\end{subequations}
\end{lemma}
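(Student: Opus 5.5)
The plan is to treat the three claims separately, using the Maue factorization \eqref{eq:maue-discrete}, $\matrixhypersingular{\region{}}=(\matrixD{\region{}})^{\top}\matrixsingle{\region{}}\matrixD{\region{}}$, to split $\solGalerkin{\region{}}$ into two Gram-type terms. Symmetry \eqref{eq:discrete-sol-structure-a} is immediate. $\matrixsingle{\region{}}$ is symmetric because $G$ is even, so $\matrixhypersingular{\region{}}$ is symmetric. The interior term $(\matrixJ{\region{}})^{\top}(\matrixsingle{\region{}})^{-1}\matrixJ{\region{}}$ is then symmetric. In the exterior case, the closed form \eqref{eq:exterior-constrained-inverse} is unavailable without invertibility of $\matrixsingle{\region{}}$, so I would instead show that $\matrixC{\region{}}$ is symmetric directly from the saddle system \eqref{eq:exterior-constrained-inverse-saddle}. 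That system has a symmetric invertible matrix by Lemma~\ref{lem:zero-mean}, so its inverse is symmetric, and $\matrixC{\region{}}$ is its upper-left block.

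For \eqref{eq:discrete-sol-structure-b}, take $x\in\R^{N_X}$ and write
\[
x^{\top}\solGalerkin{\region{}}x=(\matrixD{\region{}}x)^{\top}\matrixsingle{\region{}}(\matrixD{\region{}}x)+y^{\top}\mathcal{Q}\,y,
\qquad y:=\matrixJ{\region{}}x ,
\]
where $\mathcal Q$ is either $(\matrixsingle{\region{}})^{-1}$ or $\matrixC{\region{}}$.

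\emph{First term.} By Lemma~\ref{lem:zero-mean}, $(\matrixl{\region{}})^{\top}\matrixD{\region{}}x=0$. Hence \eqref{eq:zero-mean-matrix} gives nonnegativity in both cases, with no capacity assumption needed. Moreover, the term vanishes iff $\matrixD{\region{}}x=0$.

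\emph{Second term, interior case.} It is nonnegative because $(\matrixsingle{\region{}})^{-1}$ is SPD.

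\emph{Second term, exterior case.} Let $(\rho,c)$ solve \eqref{eq:exterior-constrained-inverse-saddle} with right-hand side $y$, so that $\rho=\matrixC{\region{}}y$. Then
\[
y^{\top}\rho=\rho^{\top}\matrixsingle{\region{}}\rho+c\,(\matrixl{\region{}})^{\top}\rho=\rho^{\top}\matrixsingle{\region{}}\rho\ge0,
\]
again by \eqref{eq:zero-mean-matrix}, since $(\matrixl{\region{}})^{\top}\rho=0$.

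For the kernel \eqref{eq:discrete-sol-structure-c}, first show that $\vectorunit{N_X}$ lies in it. $\matrixD{\region{}}\vectorunit{N_X}=0$ because the derivative of a constant vanishes. Also $\matrixJ{\region{}}\vectorunit{N_X}=\tfrac12\matrixl{\region{}}+\matrixdouble{\region{}}\vectorunit{N_X}$. The step I expect to be the main obstacle is evaluating $\matrixdouble{\region{}}\vectorunit{N_X}$ exactly at the discrete level. I would use Gauss' formula for the double-layer kernel on polygons: for $\bv x$ on the boundary away from vertices, $(\operarordouble{\region{}}1)(\bv x)=-\tfrac12$ for bounded $\region{}$ with outward normal, and $+\tfrac12$ in the exterior orientation. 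This is exact on each edge a.e., so testing with $\chi_i$ gives
\[
\matrixJ{\region{}}\vectorunit{N_X}=0\ \text{(interior)},\qquad \matrixJ{\region{}}\vectorunit{N_X}=\matrixl{\region{}}\ \text{(exterior)}.
\]
In the exterior case, \eqref{eq:zero-total-charge-h} then yields $\matrixC{\region{}}\matrixJ{\region{}}\vectorunit{N_X}=0$. The sign conventions for $\normal_{\region{}}$ in the exterior case, which points into the bounded complement, and multiply connected boundaries have to be checked carefully here.

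Conversely, suppose $\solGalerkin{\region{}}x=0$. Then $x^{\top}\solGalerkin{\region{}}x=0$, and since both terms are nonnegative, both vanish. The first term vanishing gives $\matrixD{\region{}}x=0$, so $x$ is constant on each closed boundary component. If $\partial\region{}$ is connected, this already gives $x\in\operatorname{span}\{\vectorunit{N_X}\}$. For several components, I would use the second term.

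\emph{Interior case.} The second term vanishing gives $\matrixJ{\region{}}x=0$. Let $x=\sum_m c_m\mathbf 1_{\Sigma_m}$ over the components $\Sigma_m$. Gauss' formula on each component gives $\operatorJ{\region{}}\mathbf 1_{\Sigma_m}$ equal to $\mathbf 1_{\Sigma_m}$ minus the indicator of the components where $\Sigma_m$ is the outer boundary. This is a triangular relation that forces all $c_m$ to be equal.

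\emph{Exterior case.} The second term vanishing forces $\rho=0$, so $y=c\,\matrixl{\region{}}$. The same component-wise Gauss computation then forces equal constants.
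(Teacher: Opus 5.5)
Your proposal is correct and follows the paper's proof essentially step by step: the Maue splitting into two Gram-type quadratic forms, the saddle-system description of $C^h_{\mathcal R}$ (nonnegativity, and $y=c\,\ell^h_{\mathcal R}$ when the form vanishes), the Gauss identity for $K^h_{\mathcal R}\mathbf 1$, and the component-wise double-layer computation that forces the boundary constants to agree. The only difference is cosmetic: you get the symmetry of $C^h_{\mathcal R}$ as the upper-left block of the inverse of the symmetric saddle matrix, whereas the paper checks $y_1^{\top}C^h_{\mathcal R}y_2=\rho_1^{\top}V^h_{\mathcal R}\rho_2$ directly.
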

\begin{proof}
We first prove \eqref{eq:discrete-sol-structure-a} and
\eqref{eq:discrete-sol-structure-b}.
The Maue formula \eqref{eq:maue-discrete} gives
\[
 \bigl(\matrixhypersingular{\region{}}\bigr)^\top
 =\matrixhypersingular{\region{}},
 \qquad
 x^\top\matrixhypersingular{\region{}}x
 =(\matrixD{\region{}}x)^\top\matrixsingle{\region{}}(\matrixD{\region{}}x)\geq0,
\]
where the inequality follows from the assumption on
$\matrixsingle{\region{}}$ when $\region{}$ is bounded and, in general, from
$(\matrixl{\region{}})^{\top}\matrixD{\region{}}=\vectorzero{N_X}^{\top}$ in
\eqref{eq:ell-identities} together with \eqref{eq:zero-mean-matrix}.
Consequently, the interior formula \eqref{eq:S} yields
\begin{equation}\label{eq:interior-block-quadratic-form}
\begin{aligned}
 \bigl(\solintGalerkin{\region{}}\bigr)^\top
 &=\solintGalerkin{\region{}},\\
 x^\top\solintGalerkin{\region{}}x
 &=(\matrixD{\region{}}x)^\top\matrixsingle{\region{}}(\matrixD{\region{}}x)
 +(\matrixJ{\region{}}x)^\top(\matrixsingle{\region{}})^{-1}(\matrixJ{\region{}}x)
 \geq0.
\end{aligned}
\end{equation}

For the exterior block, let $y_1,\,y_2\in\R^{N_Y}$ and let $(\rho_1,c_1)\in\R^{N_Y}\times\R$ and
$(\rho_2,c_2)\in\R^{N_Y}\times\R$ be the corresponding solutions of
\eqref{eq:exterior-constrained-inverse-saddle},
so that $\matrixC{\region{}}y_i=\rho_i$ for $i=1,2$.
Using the two equations of
\eqref{eq:exterior-constrained-inverse-saddle} in turn, we obtain
\[
 y_1^{\top}\matrixC{\region{}}y_2
 =\bigl(\matrixsingle{\region{}}\rho_1+c_1\matrixl{\region{}}\bigr)^{\top}\rho_2
 =\rho_1^{\top}\matrixsingle{\region{}}\rho_2,
\]
which is symmetric in $y_1$ and $y_2$;
hence $\matrixC{\region{}}$ is symmetric.
Taking $y_1=y_2=y$ and invoking \eqref{eq:zero-mean-matrix}, which applies because
$(\matrixl{\region{}})^{\top}\rho=0$, gives
\begin{equation}\label{eq:constrained-inverse-quadratic-form}
 y^{\top}\matrixC{\region{}}y=\rho^{\top}\matrixsingle{\region{}}\rho\geq0,
\end{equation}
with equality precisely when $\rho=\vectorzero{N_Y}$, that is, when
$y=c\,\matrixl{\region{}}$ (see again \eqref{eq:exterior-constrained-inverse-saddle}).
Therefore, we obtain
\begin{equation}\label{eq:constrained-inverse-kernel}
 \ker \matrixC{\region{}} = \operatorname{span}\{\matrixl{\region{}}\},
\end{equation}
which contains \eqref{eq:zero-total-charge-h} as the case $(\rho,c)=(\vectorzero{N_Y},1)$.
It now follows from \eqref{eq:maue-discrete}, \eqref{eq:exterior-constrained-inverse-saddle}, and
\eqref{eq:constrained-inverse-quadratic-form} that
\begin{equation}\label{eq:exterior-block-quadratic-form}
\begin{aligned}
 \bigl(\solextGalerkin{\region{}}\bigr)^\top
 &=\solextGalerkin{\region{}},\\
 x^\top\solextGalerkin{\region{}}x
 &=(\matrixD{\region{}}x)^\top\matrixsingle{\region{}}(\matrixD{\region{}}x)
 +(\matrixJ{\region{}}x)^\top \matrixC{\region{}}
   (\matrixJ{\region{}}x)
 \geq0.
\end{aligned}
\end{equation}
Therefore, \eqref{eq:discrete-sol-structure-a} and \eqref{eq:discrete-sol-structure-b} follow from \eqref{eq:interior-block-quadratic-form} and \eqref{eq:exterior-block-quadratic-form}.

It remains to prove \eqref{eq:discrete-sol-structure-c}.
Let us show that $\operatorname{span}\{\vectorunit{N_X}\}\subseteq\ker\solintextGalerkin{\region{}}$.
Since the nodal basis is a partition of unity, we have
\[
 \sum_{j=1}^{N_X}\varphi_j\equiv1\sentence{on}\curvepolygon{\region{}},
 \qquad
 \matrixD{\region{}}\mathbf1=0.
\]
With the outward normal of $\region{}$, the double-layer trace relations
\cite[Lemma~6.11, Eq.~(6.14), and the subsequent exterior-trace formula]{S08}
give
\[
 \operarordouble{\region{}}\mathbf1=
 \begin{cases}
  -\frac12\mathbf1&\qquad\region{}\text{ is bounded},\\
  +\frac12\mathbf1&\qquad\region{}\text{ is unbounded}.
 \end{cases}
\]
Therefore, for every $E_i$, we have
\[
 \begin{aligned}
 (\matrixdouble{\region{}}\vectorunit{N_X})_i
 &=\sum_{j=1}^{N_X}
   \productdual{\chi_i}{\operarordouble{\region{}}\varphi_j}{\curvepolygon{\region{}}}
 =\productdual{\chi_i}{\operarordouble{\region{}}\mathbf1}{\curvepolygon{\region{}}}.
 \end{aligned}
\]
Consequently, it follows that
\begin{equation}\label{eq:discrete-calderon-constant}
 \matrixdouble{\region{}}\vectorunit{N_X}=
 \begin{cases}
  -\frac12\matrixcross{\region{}}\vectorunit{N_X},&\qquad\region{}\text{ is bounded},\\
  +\frac12\matrixcross{\region{}}\vectorunit{N_X},&\qquad\region{}\text{ is unbounded},
 \end{cases}
 \qquad
 \matrixD{\region{}}\vectorunit{N_X}=0.
\end{equation}
If $\region{}$ is bounded, then
\[
 \matrixJ{\region{}}\vectorunit{N_X}=0,
 \qquad
 \matrixhypersingular{\region{}}\vectorunit{N_X}
 =(\matrixD{\region{}})^{\top}\matrixsingle{\region{}}\matrixD{\region{}}\vectorunit{N_X}=0,
\]
and hence \eqref{eq:S} gives
\[
 \solintGalerkin{\region{}}\vectorunit{N_X}=0.
\]
If $\region{}$ is unbounded,
then \eqref{eq:exterior-constrained-inverse} and \eqref{eq:zero-total-charge-h} give
\[
 \matrixJ{\region{}}\vectorunit{N_X}
 =\matrixcross{\region{}}\vectorunit{N_X}=\matrixl{\region{}},
 \qquad
 \matrixC{\region{}}\matrixl{\region{}}=0,
 \qquad
 \matrixhypersingular{\region{}}\vectorunit{N_X}=0,
\]
and therefore
\[
 \solextGalerkin{\region{}}\vectorunit{N_X}=0.
\]

Conversely, we shall prove that $\operatorname{span}\{\mathbf 1\} \supseteq\ker\solintextGalerkin{\region{}}$.
Suppose that $x\in\ker\solintextGalerkin{\region{}}$, and let
$\phi_h\in \spaceAffineGalerkin{\region{}}$ be the trace with the coefficient vector $x\in \R^{N_X}$.

For the interior block, we may suppose that $\region{}$ is bounded.
Then, we can write

\[
\partial\region{} = \boundarycomponent{0}\cup\bigcup_{i=1}^m \boundarycomponent{i},
\]
where $\boundarycomponent{0}$ borders the unbounded component of $\R^2\setminus\region{}$, while the remainder $\boundarycomponent{i}$'s border
the $m$ bounded components of $\R^2\setminus\region{}$; here, we use the convention that $\partial\region{} = \boundarycomponent{0}$ in the case $m = 0$.
Since $x^\top\solintGalerkin{\region{}}x=0$,
\eqref{eq:interior-block-quadratic-form} together with the positive definiteness of $\matrixsingle{\region{}}$ and $(\matrixsingle{\region{}})^{-1}$ gives
\[
 \matrixD{\region{}}x=0,
 \qquad
 \matrixJ{\region{}}x=0.
\]
Since $\phi_h$ has zero tangential derivative,
it follows that $\phi_h\lfloor_{\boundarycomponent{j}}\equiv c_j$ for $0\leq j\leq m$.
The component-wise constants give the decomposition:
\begin{equation}
  \label{eq:regional-dtn-1}
 \phi_h=c_0\functionCharacteristic{\boundarycomponent{0}}
 +\sum_{j=1}^m c_j\functionCharacteristic{\boundarycomponent{j}}
 \qquad\text{on}\quad \partial \region{}.
\end{equation}
We note that any point in $\region{}$ is surrounded by the outer boundary $\boundarycomponent{0}$,
whereas any bounded component with hole boundary $\boundarycomponent{j}$ does not include it.

We now recall from
\eqref{eq:bie-operators} that
$\operarordoubleregional{\region{}}$ denotes the off-boundary double-layer potential
to distinguish it from its boundary operator $\operarordouble{\region{}}$:
\[
 (\operarordoubleregional{\region{}}\phi)(\bv z)
 :=\int_{\partial R}
 \frac{\partial G}{\partial\bv\nu_{\region{}}(\bv y)}(\bv z-\bv y)
 \phi(\bv y)\,\dH{1}(\bv y),
 \qquad \bv z\in\R^2\setminus\partial \region{}.
\]
In particular, it follows that
\[
 \operarordoubleregional{\region{}}\functionCharacteristic{\boundarycomponent{0}}=-1,
 \qquad
 \operarordoubleregional{\region{}}\functionCharacteristic{\boundarycomponent{j}}=0
 \quad(1\leq j\leq m)
 \qquad\text{in }\region{}.
\]
By linearity, we deduce from \eqref{eq:regional-dtn-1} that
\[
 \operarordoubleregional{\region{}}\phi_h=-c_0
 \sentence{in}\region{}.
\]
Taking the interior Dirichlet trace of this identity
and using the double-layer jump formula, we obtain 
\[
 (-\tfrac12\identitymapbdd+\operarordouble{\region{}})\phi_h=-c_0\functionCharacteristic{}
 \sentence{on}\partial\region{}.
\]
Adding $\phi_h$ to both sides and restricting the result to $\boundarycomponent{j}$, we obtain
\[
 \left(\tfrac12\identitymapbdd+\operarordouble{\region{}}\right)\phi_h\lfloor_{\boundarycomponent{j}}
 =\phi_h\lfloor_{\boundarycomponent{j}}-c_0
 =c_j-c_0.
\]
By the definitions of $\matrixcross{\region{}}$, $\matrixdouble{\region{}}$, and $\matrixJ{\region{}}$ in
\eqref{eq:bem-matrices} and \eqref{eq:S}, an edge $E_i\subset\boundarycomponent{j}$
therefore satisfies
\[
 (\matrixJ{\region{}}x)_i
 =\int_{E_i}(\tfrac12\identitymapbdd+\operarordouble{\region{}})\phi_h\,\dH{1}
 =\int_{E_i}(c_j-c_0)\,\dH{1}
 =|E_i|(c_j-c_0).
\]
Since $\matrixJ{\region{}}x=0$ and $|E_i|>0$, it follows that
$c_j=c_0\ (0\leq j\leq m)$.
Consequently, we see that
\[
 \phi_h=c_0\functionCharacteristic{\partial\region{}}
 \sentence{and}
 x=c_0\vectorunit{N_X}\in\operatorname{span}\{\vectorunit{N_X}\}.
\]

For the exterior block, the assumption $x\in\ker\solextGalerkin{\region{}}$ implies
$x^\top \solextGalerkin{\region{}}x=0$.
Letting $y = \matrixD{\region{}}x$, we see that $(\matrixl{\region{}})^\top y = 0$ by Lemma~\ref{lem:zero-mean},
and \eqref{eq:zero-mean-matrix} implies that $y = 0$.
Since $\matrixC{\region{}}$ is positively semidefinite, the formula \eqref{eq:exterior-block-quadratic-form} yields that
$\matrixJ{\region{}}x\in \ker\matrixC{\region{}}$, and hence, by \eqref{eq:constrained-inverse-kernel}, we have
\[
 \matrixJ{\region{}}x=\alpha\matrixl{\region{}}
 \sentence{for some}\alpha\in\R.
\]
In this case, there is no boundary $\boundarycomponent{0}$, and hence we can write $\partial\region{}=\bigcup_{j=1}^m\boundarycomponent{j}$,
where $\boundarycomponent{j}$ bounds the $j$-th bounded component of the complement.
The outward normal $\normal_{\region{}}$ of the unbounded region points into each component.
Let $\phi_h\lfloor_{\boundarycomponent{j}}=c_j$.
We observe that the double-layer potential of $\functionCharacteristic{\boundarycomponent{j}}$ is
equal to $+1$ inside the components and zero outside it.
Therefore, its boundary traces give
\[
 \operarordouble{\region{}}\functionCharacteristic{\boundarycomponent{j}}
 =\frac12\functionCharacteristic{\boundarycomponent{j}}
 \sentence{a.e. on}\partial\region{},
\]
and thus, we have
\[
 (\tfrac12\identitymapbdd+\operarordouble{\region{}})\phi_h=\phi_h.
\]
For each edge $E_i\subset\boundarycomponent{j}$, it follows that
\[
 |E_i|c_j=(\matrixJ{\region{}}x)_i
 =\alpha(\matrixl{\region{}})_i=\alpha|E_i|,
\]
therefore $c_j=\alpha$ for every $1\leq j\leq m$, and $x=\alpha\vectorunit{N_X}$.
The proof is now complete.
\end{proof}

We now show properties of the linear operator $\dtn{\curvepolygon{}}$ which are corresponding to Proposition~\ref{prop:dtn-structure} as follows.
\begin{proposition}\label{prop:discrete-dtn-structure}
Suppose that
$\operatorname{cap}(\curvepolygon{\region{r}})<1$ for every bounded region
$\region{r}$, $1\leq r\leq I_R$.
It holds that
\begin{subequations}\label{eq:discrete-dtn-structure}
  \begin{align}
    (\dtn{\curvepolygon{}})^{\top} &= \dtn{\curvepolygon{}}\label{eq:discrete-dtn-structure-a},\\
    z^{\top}\dtn{\curvepolygon{}} z &= \sum_{r=1}^{I_R}(\projGalerkin{\region{r}}z)^{\top}\solGalerkin{\region{r}}(\projGalerkin{\region{r}}z)\geq0
    \qquad\forall z\in\R^{\totaln}\label{eq:discrete-dtn-structure-b},\\
    \ker\dtn{\curvepolygon{}} &= \operatorname{span}\{\vectorunit{\totaln}\}\label{eq:discrete-dtn-structure-c}.
  \end{align}
\end{subequations}
\end{proposition}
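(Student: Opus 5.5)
The plan is to reduce everything to the regional statements of Lemma~\ref{lem:discrete-regional-dtn-structure} and then glue the regions together through the connectivity of the network. First, I will check that the hypotheses of that lemma hold for every region. For each bounded region $\region{r}$, the assumption $\operatorname{cap}(\curvepolygon{\region{r}})<1$ together with Proposition~\ref{prop:capacity} makes $\matrixsingle{\region{r}}$ symmetric positive definite. The unbounded region needs no such hypothesis, since its block is built from the saddle system \eqref{eq:exterior-constrained-inverse-saddle}, which is well posed by Lemma~\ref{lem:zero-mean}. Hence \eqref{eq:discrete-sol-structure} holds for every $\solGalerkin{\region{r}}$, $1\le r\le I_R$.

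Symmetry \eqref{eq:discrete-dtn-structure-a} is then immediate from the definition \eqref{eq:discrete-dtn}. Each summand $(\projGalerkin{\region{r}})^\top\solGalerkin{\region{r}}\projGalerkin{\region{r}}$ is symmetric because $\solGalerkin{\region{r}}$ is. Likewise, the identity in \eqref{eq:discrete-dtn-structure-b} follows by expanding $z^\top\dtn{\curvepolygon{}}z$ summand by summand, and its nonnegativity follows from \eqref{eq:discrete-sol-structure-b} applied to $x=\projGalerkin{\region{r}}z$.

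For the kernel \eqref{eq:discrete-dtn-structure-c}, I will argue in two directions.

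\emph{Constants lie in the kernel.} The restriction $\projGalerkin{\region{r}}$ maps $\vectorunit{\totaln}$ to $\vectorunit{N_X(\region{r})}$, because it merely selects the nodal values on $\curvepolygon{\region{r}}$. By \eqref{eq:discrete-sol-structure-c}, each summand therefore annihilates $\vectorunit{\totaln}$.

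\emph{The kernel consists of constants.} If $\dtn{\curvepolygon{}}z=0$, then $z^\top\dtn{\curvepolygon{}}z=0$. Since every term in \eqref{eq:discrete-dtn-structure-b} is nonnegative, each term $(\projGalerkin{\region{r}}z)^\top\solGalerkin{\region{r}}\projGalerkin{\region{r}}z$ must vanish. For a symmetric positive semidefinite matrix $S$, the condition $x^\top Sx=0$ implies $Sx=0$: write $S=B^\top B$ and note $\|Bx\|^2=0$. Hence $\projGalerkin{\region{r}}z\in\ker\solGalerkin{\region{r}}$, so $z$ is constant, say equal to $c_r$, on the vertices of $\curvepolygon{\region{r}}$.

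The remaining step is to show that all the $c_r$ coincide. This mirrors the chain argument in the proof of Proposition~\ref{prop:dtn-structure}. Every curve $\curvepolygon{i}$ separates two regions and contributes at least two vertices to both of their boundaries, so adjacent regions carry the same constant. Since the regions form a partition of the plane by a connected-through-adjacency network, the adjacency graph of regions is connected. Finally, every vertex of $\curvepolygon{}$ lies on some region boundary, so $z=c\vectorunit{\totaln}$.

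I expect the only delicate point to be this connectivity of the region-adjacency graph: that any two regions can be joined by a chain of regions sharing a curve. I would justify it topologically, observing that the union of the closures of the regions is $\R^2$, which is connected, and that the regions meet only along curves of the network, and then simply cite it as in the continuous proof.
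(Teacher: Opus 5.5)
Your proposal is correct and follows the paper's proof essentially step for step. Proposition~\ref{prop:capacity} supplies the hypothesis of Lemma~\ref{lem:discrete-regional-dtn-structure} for the bounded regions, symmetry and nonnegativity are read off summand by summand from \eqref{eq:discrete-dtn}, and the kernel is identified by forcing each regional trace into $\operatorname{span}\{\vectorunit{N_X}\}$ and then gluing the constants across regions that share boundary vertices. Your extra justifications make explicit steps the paper states without detail: the factorization $S=B^\top B$, and the topological argument for connectivity of the region-adjacency relation (where a single shared vertex would already suffice).
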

\begin{proof}
For every bounded region $\region{\ell}$, the assumption on the capacity of
$\curvepolygon{\region{\ell}}$ and Proposition~\ref{prop:capacity} imply that
the matrix $\matrixsingle{\region{\ell}}$ is symmetric and positively definite.
Hence, the hypothesis of Lemma~\ref{lem:discrete-regional-dtn-structure} is fulfilled for every
region, the unbounded one included.

Consequently, \eqref{eq:discrete-sol-structure-a} and the assembly formula \eqref{eq:discrete-dtn} give
\[
 \bigl(\dtn{\curvepolygon{}}\bigr)^\top
 =\sum_{r=1}^{I_R}(\projGalerkin{\region{r}})^\top
 \bigl(\solGalerkin{\region{r}}\bigr)^\top \projGalerkin{\region{r}}
 =\sum_{r=1}^{I_R}(\projGalerkin{\region{r}})^\top
 \solGalerkin{\region{r}} \projGalerkin{\region{r}}
 =\dtn{\curvepolygon{}}.
\]
Hence, \eqref{eq:discrete-dtn-structure-a} follows.
Likewise, \eqref{eq:discrete-sol-structure-b} gives, for every global nodal vector $z\in\R^{\totaln}$,
\[
 z^\top\dtn{\curvepolygon{}}z
 =\sum_{r=1}^{I_R}(\projGalerkin{\region{r}}z)^\top
 \solGalerkin{\region{r}}(\projGalerkin{\region{r}}z)
 \geq0.
\]
This proves \eqref{eq:discrete-dtn-structure-b}.

Since $\projGalerkin{\region{r}}\vectorunit{\totaln}=\vectorunit{N_X(\region{r})}$, the regional kernel identity
\eqref{eq:discrete-sol-structure-c} together with \eqref{eq:discrete-dtn} gives
\[
 \operatorname{span}\{\vectorunit{\totaln}\}
 \subseteq\ker\dtn{\curvepolygon{}}.
\]
Conversely, assume that $z\in\ker\dtn{\curvepolygon{}}$.
Then, the formula \eqref{eq:discrete-dtn-structure-b} together with \eqref{eq:discrete-sol-structure-b} implies that
\[
 (\projGalerkin{\region{r}}z)^\top
 \solGalerkin{\region{r}}(\projGalerkin{\region{r}}z)=0
 \qquad 1\leq \forall r\leq I_R.
\]
By \eqref{eq:discrete-sol-structure-a} and \eqref{eq:discrete-sol-structure-b},
we see that $\projGalerkin{\region{r}}z \in\ker\solGalerkin{\region{r}}$, and
\eqref{eq:discrete-sol-structure-c} implies that
there are constants $c_r\in\R$ such that
\[
 \projGalerkin{\region{r}}z=c_r\vectorunit{N_X(\region{r})}
 \qquad 1\leq \forall r\leq I_R.
\]
Since the nodal trace should correspond
on two regions sharing a curve as a part of their boundaries,
we see that $c_1=\cdots=c_{I_R}=:c$, and therefore,
\[
 z=c\vectorunit{\totaln},
 \qquad
 \ker\dtn{\curvepolygon{}}=\operatorname{span}\{\vectorunit{\totaln}\}.
\]
The proof is now complete.
\end{proof}

\section{Fully discrete scheme}\label{sec:scheme}
We discretize the time interval $[0,T]$ into $M$ sub-intervals.
Given a curve network $\Gamma_0\subset\R^2$, we approximate it by a polygonal curve network $\curvepolygonsec{0}$ as in Section~\ref{sec:dio};
we hereafter construct a sequence of polygonal curve networks $\{\curvepolygonsec{m}\}_{m=0}^M$ approximating a solution to the system \eqref{eq:strong},
where $\curvepolygonsec{m}$ denotes the approximate interface of $\Gamma(t)$ at $t = mT/M$ for $m=0,\ldots,M$.

For each curve $\curvepolygonsec{m}_i$, let
\[
 \femspaceS{m}{i}
 :=\left\{\phi_i\in C(\curvepolygonsec{m}_i)\biggm|
 \phi_i\lfloor_e\in\mathbb P_1(e)\quad\forall e\in\edgesDisc{m}{i}\right\},
\]
where $\edgesDisc{m}{i}$ denotes the set of all edges on the curve $\curvepolygonsec{m}_i$.
For an edge $j=[\bv X_{i,k}^m,\bv X_{i,k+1}^m]\in\edgesDisc{m}{i}$ and $\bv Z\in\R^{2\totaln}$,
define
\begin{equation}\label{eq:edge-difference}
  \begin{aligned}
 \bv\ell_j^m &:=\bv X_{i,k+1}^m-\bv X_{i,k}^m,\\
 \bv\ell_j(\bv Z)
 &:=\bv Z_{i,k+1}-\bv Z_{i,k}
 \sentence{with}
 \bv Z_{i,k}:=\bv Z_{\iota_i(k)}\in\R^2,
  \end{aligned}
\end{equation}
where the vertex index is periodic for a closed curve.
Based on this space, we define three boundary element spaces corresponding to $\femspaceSS{}{\Gamma}$, $\femspaceKK{}{\Gamma}$, and $\femspaceXX{}{\Gamma}$ in \eqref{eq:weak-bes-1} and \eqref{eq:weak-bes-2}:
\begin{align*}
 \mathbb S^m
 &: =\left\{(\phi_1,\ldots,\phi_{I_S})\in\prod_{i=1}^{I_S}\femspaceS{m}{i}\biggm|
 \phi_{\indextj{k}{1}}(\junction{k})
 =\phi_{\indextj{k}{2}}(\junction{k})
 =\phi_{\indextj{k}{3}}(\junction{k}),\ 1\leq \forall k\leq I_T\right\},\\
 \mathbb K^m
 &: =\prod_{i=1}^{I_S}\femspaceS{m}{i},\\
 \mathbb X^m
 &: =\left\{(\bv\eta_1,\ldots,\bv\eta_{I_S})
 \in\prod_{i=1}^{I_S}[\femspaceS{m}{i}]^2\biggm|
 \bv\eta_{\indextj{k}{1}}(\junction{k})
 =\bv\eta_{\indextj{k}{2}}(\junction{k})
 =\bv\eta_{\indextj{k}{3}}(\junction{k}),\ 1\leq\forall k\leq I_T\right\}.
\end{align*}
It is easily seen that $\dim \femspaceSS{m}{} = \totaln$, $\dim \femspaceKK{m}{}= N_a$, and $\dim \femspaceXX{m}{} = 2\totaln$.
For $u=(u_i)_{i=1}^{I_S},\,z=(z_i)_{i=1}^{I_S}\in\femspaceKK{m}{}$,
define the mass-lumped inner product by
\begin{equation}\label{eq:mass-lumped-inner-product}
  \productmass{u}{z}{M_v}
 :=\sum_{i=1}^{I_S}\sum_{k=1}^{N_i}
 \mu_{i,k}\,u_i(\bv X_{i,k}^m)z_i(\bv X_{i,k}^m) \approx \sum_{i = 1}^{I_S} \int_{\curvepolygonsec{m}_i} u(\bv x)z(\bv x)\,\dH{1}(\bv x),
\end{equation}
where we recall the definition of the arc-wise lumped mass $\mu_{i,k}$ and matrix $M_v$ from \eqref{eq:arcwise-mass} and \eqref{eq:arcwise-lumped-mass}, respectively.
For a vector $g\in \R^{I_P\totaln}$ satisfying $g^\top \mathbf 1_{I_P\totaln} \neq 0$, define
\begin{equation}
  \label{eq:gauge}
 \femspaceWW{m}{g}
 :=\left\{\bvp W\in(\femspaceSS{m}{})^{I_P}\biggm|g^\top\bvp W=0\right\}.
\end{equation}
In the implementation, the vector $g$ is set to the mass-lumped line-integral vector.
More precisely,
\[
 g:=\mathbf1_{I_P}\otimes\widehat m_v\qquad\text{with}\quad
 \widehat m_v:=P_h^\top M_v\mathbf1_{N_a}\in\R^{N_{\mathrm{tot}}}.
\]

We now encode the constraint of $\femspaceWW{m}{g}$ as
\begin{equation*}
 g^\top\bvp W
 =\sum_{p=0}^{I_P-1}\widehat m_v^\top W_p
 =\sum_{p=0}^{I_P-1}\sum_{j=1}^{N_{\mathrm{tot}}}
   \widehat m_{v,j}W_{p,j}=0.
\end{equation*}
For an edge of $\curvepolygonsec{m}_i$ with consecutive vertices
$\bv X_{i,k}^m$ and $\bv X_{i,k+1}^m$, we let
\begin{equation}\label{eq:edge-normal}
 \normaldiscsec{i,k}{m}
 :=-\frac{\left(\bv X_{i,k+1}^m-\bv X_{i,k}^m\right)^{\perp}}{h_{i,k}}
\end{equation}
be its unit normal, oriented as in Section~\ref{sec:setting}, that is,
pointing from $\indexphasefrom{i}$ to $\indexphaseto{i}$; here, $h_{i,k}$ is the edge length
introduced in \eqref{eq:arcwise-mass}, and the vertex index is periodic when
$\curvepolygon{i}$ is closed.

We now define the mass-lumped vertex normal:
\begin{equation}\label{eq:nodal-normal-choices}
 \normalmass{m}{i,k}:=
 \begin{cases}
 \dfrac{h_{i,k-1}\normaldiscsec{i,k-1}{m}+h_{i,k}\normaldiscsec{i,k}{m}}
       {h_{i,k-1}+h_{i,k}}
 & \text{if }\curvepolygon{i}\text{ is closed},\\
 \normaldiscsec{i,1}{m}
 & \text{if }\curvepolygon{i}\text{ is open and }k=1,\\
 \dfrac{h_{i,k-1}\normaldiscsec{i,k-1}{m}+h_{i,k}\normaldiscsec{i,k}{m}}
       {h_{i,k-1}+h_{i,k}}
 & \text{if }\curvepolygon{i}\text{ is open and }1<k<N_i,\\
 \normaldiscsec{i,N_i-1}{m}
 & \text{if }\curvepolygon{i}\text{ is open and }k=N_i.
 \end{cases}
\end{equation}
Define the arc-wise flattening map
\begin{equation}
  \label{eq:flattening-map}
 \alpha(i,k):=\sum_{r=1}^{i-1}N_r+k,
 \qquad 1\leq i\leq I_S,\quad 1\leq k\leq N_i,
\end{equation}
so that $\alpha(i,k)\in\{1,\ldots,N_a\}$.
Let $B_n\in\R^{N_a\times2\totaln}$ be the nodal normal projection $B_n\bv{Z}\in\R^{N_a}$ of $\bv{Z}\in \R^{2\totaln}$ defined by
\[
(B_n \bv Z)_{\alpha(i,k)}
:=\bv Z_{\iota_i(k)}\cdot\normalmass{m}{i,k},
\qquad 1\leq i\leq I_S,\quad 1\leq k\leq N_i.
\]
For $\bvp\Phi\in(\femspaceSS{m}{})^{I_P}$, define
\[
 (\mathcal D_h^*\bvp\Phi)_i
 :=\sum_{p=0}^{I_P-1}m_{p,i}\,\Phi_p\lfloor_{\curvepolygonsec{m}_i},
 \qquad 1\leq i\leq I_S,
\]
whose coefficient matrix in the nodal bases is $\mathcal D_h^\top$.

Given vertices $\bv X^m$, one discrete step seeks
\[
 (\checmicalDiscVec{m+1},\kappa^{m+1},\bv X^{m+1})
 \in\femspaceWW{m}{g}\times\femspaceKK{m}{}\times\femspaceXX{m}{},
\]
with
\[
 v^{m+1}:=B_n\frac{\bv X^{m+1}-\bv X^m}{\Delta t}\in\femspaceKK{m}{},
\]
such that
\begin{align}
 \sum_{p=0}^{I_P-1}\productdual{\dtn{\curvepolygonsec{m}}\checmicalDisc{p}^{m+1}}{\Phi_p}{\curvepolygonsec{m}}
 - \productmass{v^{m+1}}{\mathcal D_h^*\bvp\Phi}{M_v}&=0
 &&\forall\bvp\Phi\in(\femspaceSS{m}{})^{I_P},\label{eq:weak-discrete-motion}\\
 \productmass{\mathcal D_h^*\bvp W^{m+1}-\kappa^{m+1}}{\chi}{M_v}&=0
 &&\forall\chi\in\femspaceKK{m}{},\label{eq:weak-discrete-gibbs-thomson}\\
 (\kappa^{m+1},B_n\bv\eta)_{M_v}
 +a_\gamma^m(\bv X^{m+1},\bv\eta)&=0
 &&\forall\bv\eta\in\femspaceXX{m}{},\label{eq:weak-discrete-curvature}
\end{align}
where
\[
  a^m_\gamma(\bv X, \bv \eta)
  :=\sum_{i=1}^{I_S}c_i\sum_{j\in\edgesDisc{m}{i}}\sum_{l=1}^{L_i}
  \frac{\bv\ell_j(\bv X)^{\top}\widetilde G_{i,l}\bv\ell_j(\bv \eta)}
       {s_{i,l}(\bv\ell_j^m)}
\]
with
\[
  s_{i,l}(\bv q):=\sqrt{\bv q^{\top}\widetilde G_{i,l}\bv q},
  \quad
  \widetilde G_{i,l}:=R_{\tfrac{\pi}{2}}^{\top}G_{i,l}R_{\tfrac{\pi}{2}},
  \quad
R_{\tfrac{\pi}{2}} := \begin{pmatrix}
  0 & -1\\
  1 & 0
\end{pmatrix}.
\]
Here and below, $R_\theta$ denotes the rotation matrix by the angle $\theta$.

For $\Xm\in\R^{2\totaln}$, define the discrete anisotropic interfacial energy by
\begin{equation}\label{eq:discrete-anisotropic-energy}
  \Eh(\Xm)
  :=\sum_{i=1}^{I_S}c_i\sum_{j\in\edgesDisc{m}{i}}\sum_{l=1}^{L_i}
  s_{i,l}\bigl(\bv\ell_j(\Xm)\bigr) = \int_{\curvepolygonsec{m}}\gamma(\normal)\,\dH{1}.
\end{equation}

\begin{remark}[Gauge equivalence under the exact-kernel assumption]
  Assume that $\ker\dtn{\curvepolygonsec{m}} = \operatorname{span}\{\bvp 1\}$.
  Then, we observe that the gauge condition \eqref{eq:gauge} implies the point-wise nodal zero-sum condition
\[
 \sum_{p=0}^{I_P-1}W_{p,j}^{m+1}=0,
 \qquad 1\leq j\leq N_{\mathrm{tot}}.
\]
Indeed, suppose that $\checmicalDiscVec{m+1}\in\femspaceWW{m}{g}$, take any $\phi\in\femspaceSS{m}{}$ and choose
$\bvp\Phi=(\phi,\ldots,\phi)\in(\femspaceSS{m}{})^{I_P}$ in \eqref{eq:weak-discrete-motion}.
By the definition of the discrete incidence action \eqref{eq:discrete-incidence-action} and
the zero-sum property of $M_{p,h}$ \eqref{eq:mph-sum-zero}, it follows that
\[
 \mathcal D_h^*\bvp\Phi
 =\sum_{p=0}^{I_P-1}M_{p,h}P_h\phi
 =\left(\sum_{p=0}^{I_P-1}M_{p,h}\right)P_h\phi
 =0 \in\R^{N_a},
\]
and hence the second term of \eqref{eq:weak-discrete-motion} vanishes. 
Therefore, we deduce from the linearity of $\dtn{\curvepolygonsec{m}}$ that
\begin{equation*}
 0 =\sum_{p=0}^{I_P-1}
 \productdual{\dtn{\curvepolygonsec{m}}W_p^{m+1}}{\phi}{\curvepolygonsec{m}}
 =\productdual{
 \dtn{\curvepolygonsec{m}}\left(\sum_{p=0}^{I_P-1}W_p^{m+1}\right)
 }{\phi}{\curvepolygonsec{m}}.
\end{equation*}
Since $\phi\in\femspaceSS{m}{}$ is arbitrary, it follows that
$$\dtn{\curvepolygonsec{m}}\left(\sum_{p=0}^{I_P-1}W_p^{m+1}\right)=0.$$
Hence, the exact-kernel assumption yields
\[
 \sum_{p=0}^{I_P-1}W_p^{m+1}=c\mathbf1.
\]
Since every component of $\widehat m_v$ is positive, we obtain
\[
 0=g^\top\bvp W^{m+1}
  =\widehat m_v^\top\sum_{p=0}^{I_P-1}W_p^{m+1}
  =c\,\widehat m_v^\top\mathbf1,
\]
and hence $c = 0$.
\end{remark}

With respect to the nodal bases of $\femspaceSS{m}{}$, $\femspaceKK{m}{}$, and
$\femspaceXX{m}{}$, we use the same symbols for finite element functions and its coefficient
vectors. Thus,
\[
 W_p^{m+1}\in\R^{\totaln},\qquad
 \kappa^{m+1}\in\R^{N_a},\qquad
 \bv X^m,\,\bv X^{m+1}\in\R^{2\totaln},
\]
and set
\[
 \checmicalDiscVec{m+1}
 :=(W_0^{m+1};\ldots;W_{I_P-1}^{m+1})
 \in\R^{I_P\totaln}.
\]
Let $A_\gamma^m\in\R^{2\totaln\times2\totaln}$ represent the stiffness form, i.e.,
\[
 \bv\eta^{\top}A_\gamma^m\bv X
 =a_\gamma^m(\bv X,\bv\eta)
 \qquad\text{for all}\qquad\bv X,\,\bv\eta\in\R^{2\totaln}.
\]
Then, the discrete weak formulation represented in \eqref{eq:weak-discrete-motion}, \eqref{eq:weak-discrete-gibbs-thomson}, and \eqref{eq:weak-discrete-curvature} is equivalent to
\begin{subequations}\label{eq:fully-discrete-scheme}
\begin{align}
  (\identitymatrix{I_P}\otimes \dtn{\curvepolygonsec{m}})\,\checmicalDiscVec{m+1} - \frac{1}{\dt}\, \mathcal D_h \Mv B_n (\bv X^{m+1} - \bv X^m) &= 0, \label{eq:F1}\\
  \mathcal D_h^{\top} \checmicalDiscVec{m+1} - \kappa^{m+1} &= 0, \label{eq:F2}\\
  B_n^{\top}\Mv\,\kappa^{m+1} + A^m_\gamma \bv X^{m+1} &= 0 \label{eq:E2}
\end{align}
together with the coefficient constraint
\begin{equation}
  \label{eq:G1}
 g^{\top}\checmicalDiscVec{m+1}=0.
\end{equation}
\end{subequations}
We can eliminate $\kappa^{m+1}$ from the system by substituting \eqref{eq:F2} into \eqref{eq:E2}.
Letting
\[
 \delta \bv X^{m+1} :=\bv X^{m+1}-\bv X^m,
 \qquad
 C_h^m:=\mathcal D_h\Mv B_n,
\]
the corresponding system has the symmetric block form:
\begin{equation}\label{eq:bordered-block-system}
\begin{pmatrix}
 \dt(\identitymatrix{I_P}\otimes\dtn{\curvepolygonsec{m}}) & -C_h^m & g\\
 -(C_h^m)^{\top} & -A_\gamma^m & 0\\
 g^{\top} & 0 & 0
\end{pmatrix}
\begin{pmatrix}
 \checmicalDiscVec{m+1}\\ \delta\bv X^{m+1}\\ \lambda
\end{pmatrix}
=
\begin{pmatrix}
 0\\ A_\gamma^m\bv X^m\\ 0
\end{pmatrix},
\end{equation}
where $\lambda\in\R$ is the rescaled gauge multiplier.

Before proving our first main result, we prepare an elementary lemma.
\begin{lemma}\label{lem:convex}
For $n\in\mathbb N$, let $G\in\R^{n\times n}$ be symmetric and let $s(\vec p) := \sqrt{\vec p^\top G\vec p}$ for $\vec p\in \R^n$.
If $G$ is positively semi-definite, then
\[
  \vec q^{\top}G(\vec q- \vec p) \ \ge\ \frac12\left(s(\vec q)^2 - s(\vec p)^2\right)
\]
holds for every $\vec p,\,\vec q\in\R^n$.
Meanwhile, if $G$ is positive definite and $p\neq0$, then
\[
  \frac{1}{2s(\vec p)}\left(s(\vec q)^2 - s(\vec p)^2\right) \ \ge\ s(q)-s(p)
\]
holds.
\end{lemma}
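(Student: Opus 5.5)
The first inequality will follow from expanding a square; the second from a one-variable AM--GM type bound. I will treat the two claims separately.

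For the first claim, I will use that $G$ is symmetric and positive semi-definite, so $(\vec q-\vec p)^\top G(\vec q-\vec p)\ge0$. Expanding this gives $\vec q^\top G\vec q-2\vec q^\top G\vec p+\vec p^\top G\vec p\ge0$. Next I rewrite
\[
\vec q^\top G(\vec q-\vec p)=\tfrac12\bigl(\vec q^\top G\vec q-\vec p^\top G\vec p\bigr)+\tfrac12(\vec q-\vec p)^\top G(\vec q-\vec p),
\]
which is a direct algebraic identity using the symmetry $\vec q^\top G\vec p=\vec p^\top G\vec q$. Dropping the nonnegative last term and recalling $s(\vec q)^2=\vec q^\top G\vec q$ and $s(\vec p)^2=\vec p^\top G\vec p$ yields the claim. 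The only point requiring care is the symmetry used in the identity.

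For the second claim, positive definiteness and $\vec p\neq0$ give $s(\vec p)>0$, so the left-hand side is well defined. Writing $a:=s(\vec q)\ge0$ and $b:=s(\vec p)>0$, the inequality reads $\frac{a^2-b^2}{2b}\ge a-b$. Multiplying by $2b>0$, this is equivalent to $a^2-b^2-2ab+2b^2=(a-b)^2\ge0$, which holds trivially.

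There is no real obstacle here. The only subtle point is making sure $s(\vec p)>0$ so that the division and the multiplication by $2b$ are legitimate. This is exactly where positive definiteness and $\vec p\neq0$ enter; for the first claim semi-definiteness suffices. In the paper these two inequalities will be chained with $\vec q=\bv\ell_j(\bv X^{m+1})$, $\vec p=\bv\ell_j^m$ and $G=\widetilde G_{i,l}$ to bound $a_\gamma^m(\bv X^{m+1},\delta\bv X^{m+1})$ from below by $E_h(\bv X^{m+1})-E_h(\bv X^m)$.
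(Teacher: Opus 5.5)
Your proposal is correct and follows essentially the same route as the paper: the first inequality comes from the identity $\vec q^{\top}G(\vec q-\vec p)-\tfrac12\bigl(s(\vec q)^2-s(\vec p)^2\bigr)=\tfrac12(\vec q-\vec p)^{\top}G(\vec q-\vec p)\ge0$, which uses the symmetry of $G$. The second comes from expanding $(s(\vec q)-s(\vec p))^2\ge0$ and dividing by $2s(\vec p)>0$.
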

\begin{proof}
We compute
\begin{equation*}
  \begin{aligned}
\vec q^{\top}G(\vec q- \vec p) -\frac12\left(s(\vec q)^2 - s(\vec p)^2\right)
&= \frac12 \vec q^\top G\vec q - \vec q^\top G\vec p + \frac12 \vec p^\top G\vec p\\
&= \frac12 \vec q^\top G\vec q - \frac12 \vec q^\top G\vec p - \frac12 \vec q^\top G\vec p  + \frac12 \vec p^\top G\vec p\\
&= \frac12 \vec q^\top G(\vec q - \vec p) - \frac12 (\vec q - \vec p)^\top G\vec p\\
&= \frac12 \vec q^\top G(\vec q - \vec p) - \frac12 \vec p^\top G^\top (\vec q - \vec p)
= \frac12 (\vec q - \vec p)^\top G (\vec q - \vec p) \geq 0.
  \end{aligned}
\end{equation*}
Here, we have invoked the symmetric property  of $G$ to obtain the last identity,
and the last inequality follows from the positive semi-definiteness of $G$.
For the second inequality, we compute
\begin{equation*}
  \begin{aligned}
    0\leq (s(\vec q) - s(\vec p))^2
    = s(\vec q)^2 - 2s(\vec q)s(\vec p) + s(\vec p)^2
    = s(\vec q)^2 - s(\vec p)^2 -2s(\vec q)s(\vec p) + 2s(\vec p)^2,
  \end{aligned}
\end{equation*}
and thus
\[
2s(\vec p)(s(\vec q) - s(\vec p)) \leq s(\vec q)^2 - s(\vec p)^2.
\]
The proof is now complete.
\end{proof}

\begin{definition}
  The polygonal curve network $\curvepolygonsec{m}$ is said to be \textit{non-degenerate} provided that
$s_{i,l}(\bv{\ell}^m_j)>0$ holds for every $1\leq i\leq I_S$, $j\in\edgesDisc{m}{i}$, and $1\leq l\leq L_i$, so that $A^m_\gamma$ is defined.
\end{definition}

We are now in a position to state the main result of this paper.

\begin{theorem}\label{thm:main}
Assume that the hypothesis of Proposition~\ref{prop:discrete-dtn-structure} is valid, and
$\curvepolygonsec{m}$ is non-degenerate.
Then, any solution $(\checmicalDiscVec{m+1},\kappa^{m+1},\bv X^{m+1})$ of \eqref{eq:F1}--\eqref{eq:E2} satisfies, for every $\dt>0$,
\begin{equation}\label{eq:main}
  \Eh(\bv X^{m+1}) + \dt \sum_{p=0}^{I_P-1} (\checmicalDisc{p}^{m+1})^{\top}\dtn{\curvepolygonsec{m}}\checmicalDisc{p}^{m+1} \le \Eh(\bv X^{m})
\end{equation}
holds. In particular, $\Eh$ is non-increasing for every $\dt > 0$.
\end{theorem}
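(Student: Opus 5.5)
The plan is the standard test-function argument of parametric finite element stability proofs, carried out on the matrix form \eqref{eq:F1}--\eqref{eq:E2}. First, multiply \eqref{eq:F1} from the left by $\dt\,(\checmicalDiscVec{m+1})^{\top}$. By the block structure of $\identitymatrix{I_P}\otimes\dtn{\curvepolygonsec{m}}$, the first term becomes $\dt\sum_{p}(\checmicalDisc{p}^{m+1})^{\top}\dtn{\curvepolygonsec{m}}\checmicalDisc{p}^{m+1}$. The second term becomes $(\checmicalDiscVec{m+1})^{\top}\mathcal D_h\Mv B_n\delta\bv X^{m+1}=(\mathcal D_h^{\top}\checmicalDiscVec{m+1})^{\top}\Mv B_n\delta\bv X^{m+1}$. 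Using \eqref{eq:F2}, this equals $(\kappa^{m+1})^{\top}\Mv B_n\delta\bv X^{m+1}$.

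Next, multiply \eqref{eq:E2} by $(\delta\bv X^{m+1})^{\top}$. Since $\Mv$ is diagonal and hence symmetric, this gives $(\kappa^{m+1})^{\top}\Mv B_n\delta\bv X^{m+1}=-(\delta\bv X^{m+1})^{\top}A_\gamma^m\bv X^{m+1}$. Combining the two identities yields
\[
 \dt\sum_{p=0}^{I_P-1}(\checmicalDisc{p}^{m+1})^{\top}\dtn{\curvepolygonsec{m}}\checmicalDisc{p}^{m+1}
 =-a_\gamma^m(\bv X^{m+1},\bv X^{m+1}-\bv X^m).
\]
Proposition~\ref{prop:discrete-dtn-structure} guarantees that the left-hand side is nonnegative. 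The symmetry of $\dtn{\curvepolygonsec{m}}$ is not actually needed here, only the quadratic form. The hypotheses enter only through this nonnegativity and through the well-definedness of $A^m_\gamma$.

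The main step is to bound $a_\gamma^m(\bv X^{m+1},\delta\bv X^{m+1})$ from below by $\Eh(\bv X^{m+1})-\Eh(\bv X^m)$. Linearity of $\bv\ell_j$ gives $\bv\ell_j(\delta\bv X^{m+1})=\bv q-\bv p$ with $\bv q:=\bv\ell_j(\bv X^{m+1})$ and $\bv p:=\bv\ell_j^m=\bv\ell_j(\bv X^m)$. Each summand is therefore $\bv q^{\top}\widetilde G_{i,l}(\bv q-\bv p)/s_{i,l}(\bv p)$. The matrix $\widetilde G_{i,l}=R^{\top}G_{i,l}R$ is symmetric positive definite, and non-degeneracy gives $s_{i,l}(\bv p)>0$. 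The first inequality of Lemma~\ref{lem:convex}, divided by $s_{i,l}(\bv p)>0$, and then the second inequality give
\[
 \frac{\bv q^{\top}\widetilde G_{i,l}(\bv q-\bv p)}{s_{i,l}(\bv p)}\ \ge\ \frac{s_{i,l}(\bv q)^2-s_{i,l}(\bv p)^2}{2s_{i,l}(\bv p)}\ \ge\ s_{i,l}(\bv q)-s_{i,l}(\bv p).
\]
Multiplying by $c_i>0$ and summing over $i$, $j\in\edgesDisc{m}{i}$ and $l$ yields $a_\gamma^m(\bv X^{m+1},\delta\bv X^{m+1})\ge\Eh(\bv X^{m+1})-\Eh(\bv X^m)$. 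Here both energies are evaluated edgewise over the edge set of $\curvepolygonsec{m}$, which is valid because the connectivity is unchanged between the two steps.

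Inserting this bound into the identity gives \eqref{eq:main}, and dropping the nonnegative dissipation term gives monotonicity for every $\dt>0$. The only delicate point I expect is bookkeeping: the dual pairing in \eqref{eq:weak-discrete-motion} must coincide with the Euclidean product of nodal vectors, and $\Mv$ must be symmetric, so that the two cross terms cancel exactly. No restriction on $\dt$ is needed, because the anisotropic form is linearized with the denominator $s_{i,l}(\bv\ell_j^m)$ frozen at the old time step.
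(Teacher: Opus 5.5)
Your proposal is correct and follows essentially the same route as the paper's proof. You test \eqref{eq:F1} with the chemical potentials and \eqref{eq:E2} with the displacement, cancel the cross terms via \eqref{eq:F2} and the symmetry of the lumped mass matrix, and then apply both inequalities of Lemma~\ref{lem:convex} edgewise with the frozen denominator $s_{i,l}(\bv\ell_j^m)>0$ to bound the linearized anisotropic form below by the energy difference.
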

\vspace{-0.8em}
\begin{proof}
Let
\[
v_n^{m+1} = \frac{1}{\dt}\Bn \delta\bv{X}^{m+1},
\qquad \mathcal{Q}(\checmicalDiscVec{}) := \sum_{p=0}^{I_P-1} (\checmicalDisc{p})^{\top}\dtn{\curvepolygonsec{m}} \checmicalDisc{p}.
\]

Left-multiplying the coefficient equation
\eqref{eq:F1} by $(\checmicalDiscVec{m+1})^{\top}$ gives
\begin{equation}
  \label{eq:Dw-formula-1}
  \begin{aligned}
  0 &= \mathcal{Q}(\checmicalDiscVec{m+1}) - \frac1{\dt}\,(\checmicalDiscVec{m+1})^\top\mathcal D_h\Mv\Bn \delta \bv X^{m+1}\\
    &= \mathcal{Q}(\checmicalDiscVec{m+1}) - (\mathcal D_h^\top\checmicalDiscVec{m+1})^\top\Mv v_n^{m+1}
     = \mathcal{Q}(\checmicalDiscVec{m+1}) - (\kappa^{m+1})^\top\Mv v_n^{m+1}.
  \end{aligned}
\end{equation}
Here, we have invoked \eqref{eq:F2} to obtain the last identity.

Left-multiplying the coefficient equation
\eqref{eq:E2} by $(\delta\bv X^{m+1})^{\top}$ and using the symmetry of
$\Mv$, we compute
\begin{equation}
  \label{eq:Dw-formula-2}
  \begin{aligned}
  a^m_\gamma(\Xmp,\delta\Xmp)
  &= (\delta\Xmp)^{\top}A^m_\gamma\Xmp = -(\delta\Xmp)^{\top}\Bn^{\top}\Mv\kappa^{m+1}\\
  &= -\dt\, (v_n^{m+1})^\top \Mv\kappa^{m+1}
   = -\dt\,(\kappa^{m+1})^\top\Mv v_n^{m+1}.
  \end{aligned}
\end{equation}
We combine \eqref{eq:Dw-formula-1} and \eqref{eq:Dw-formula-2} to obtain
\begin{equation}\label{eq:pf:2}
  a^m_\gamma(\Xmp,\delta\Xmp) = -\dt\,\mathcal{Q}(\checmicalDiscVec{m+1}).
\end{equation}

We now recall the definition of the discrete anisotropic energy from \eqref{eq:discrete-anisotropic-energy}:
\[
\Eh(\Xm)=\sum_{i=1}^{I_S}c_i\sum_{j\in\edgesDisc{m}{i}}\sum_{l=1}^{L_i}
s_{i,l}(\bv\ell_j(\Xm)) .
\]
For each $1\leq i\leq I_S$, $j\in\edgesDisc{m}{i}$, and $1\leq l\leq L_i$,
put $\bv q = \bv{\ell}_j(\Xmp)$ and
$\bv p = \bv{\ell}_j(\Xm)$. Then, it is easily seen that $\bv q- \bv p = \bv{\ell}_j(\delta\Xmp)$ holds.
We now apply Lemma~\ref{lem:convex} with $G=\widetilde G_{i,l}$ and $s = s_{i,l}$ for $j\in\edgesDisc{m}{i}$
to derive
\[
  \frac{\bv q^{\top}\widetilde G_{i,l}(\bv q- \bv p)}{s_{i,l}(\bv p)}
  \ge \frac{s_{i,l}(\bv q)^2 - s_{i,l}(\bv p)^2}{2s_{i,l}(\bv p)}
  \ge s_{i,l}(\bv q)-s_{i,l}(\bv p),
\]
and thus
\[
  \frac{\bv \ell_j(\Xmp)^\top\widetilde{G}_{i,l}\bv \ell_j(\delta\Xmp)}{s_{i,l}(\bv\ell_j(\Xm))}
  \ge s_{i,l}(\bv \ell_j(\Xmp)) - s_{i,l}(\bv \ell_j(\Xm)).
\]
Multiplying the above inequality by $c_i$ and summing over $1\leq i \leq I_S$, $j\in\edgesDisc{m}{i}$, and $1\leq l\leq L_i$ gives
\begin{equation}\label{eq:pf:3}
  a^m_\gamma(\Xmp,\delta\Xmp) \ge \Eh(\Xmp)-\Eh(\Xm).
\end{equation}

Combining \eqref{eq:pf:2} and \eqref{eq:pf:3} yields
$$\Eh(\Xmp)-\Eh(\Xm) \le -\dt\,\mathcal{Q}(\checmicalDiscVec{m+1}),$$
which is nothing but \eqref{eq:main}, and this concludes the proof.
\end{proof}

\begin{remark}
  The inequality \eqref{eq:pf:3} is indeed already known for the class \eqref{eq:gamma} of anisotropies as stated in \cite[Lemma 102]{BGN20},
  although we have reproduced this estimate for the reader's convenience.
\end{remark}

We now show an area-preserving property of each phase for the fully discrete scheme \eqref{eq:F1}, \eqref{eq:F2}, and \eqref{eq:E2} in the velocity level.
\begin{proposition}\label{prop:cons}
For $0\leq p\leq I_P-1$, let $\bvp e_p\in\R^{I_P}$ be defined by $(\bvp e_p)_r := \delta_{pr}$.
Set
\[
 z_p:=\bvp e_p\otimes\mathbf1_{\totaln}\in\R^{I_P\totaln},
 \qquad
 d_p:=\mathcal D_h^{\top}z_p\in\R^{N_a}.
\]
Then, every solution of \eqref{eq:F1} satisfies $d_p^{\top}\Mv v_n^{m+1} = 0$ for
each phase $0\leq p\leq I_P-1$.
\end{proposition}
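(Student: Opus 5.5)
The plan is to test \eqref{eq:F1} against the phase indicator vector $z_p$ and then use the kernel property of the discrete interface operator. Left-multiplying \eqref{eq:F1} by $z_p^{\top}$ gives
\[
 z_p^{\top}(\identitymatrix{I_P}\otimes\dtn{\curvepolygonsec{m}})\checmicalDiscVec{m+1}
 -\frac1{\dt}\,z_p^{\top}\mathcal D_h\Mv\Bn\,\delta\bv X^{m+1}=0 .
\]
By the definition $v_n^{m+1}=\dt^{-1}\Bn\delta\bv X^{m+1}$ and $d_p=\mathcal D_h^{\top}z_p$, the second term is exactly $-d_p^{\top}\Mv v_n^{m+1}$. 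It therefore suffices to show that the first term vanishes.

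For the first term, the Kronecker structure together with $z_p=\bvp e_p\otimes\mathbf1_{\totaln}$ gives
\[
 z_p^{\top}(\identitymatrix{I_P}\otimes\dtn{\curvepolygonsec{m}})\checmicalDiscVec{m+1}
 =\mathbf1_{\totaln}^{\top}\dtn{\curvepolygonsec{m}}\checmicalDisc{p}^{m+1}.
\]
We then use the symmetry \eqref{eq:discrete-dtn-structure-a} to rewrite this as $(\dtn{\curvepolygonsec{m}}\mathbf1_{\totaln})^{\top}\checmicalDisc{p}^{m+1}$.

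It remains to show that $\dtn{\curvepolygonsec{m}}\mathbf1_{\totaln}=0$. This is the inclusion $\operatorname{span}\{\mathbf1_{\totaln}\}\subseteq\ker\dtn{\curvepolygonsec{m}}$ in \eqref{eq:discrete-dtn-structure-c}. It also follows directly, region by region, from $\projGalerkin{\region{r}}\mathbf1_{\totaln}=\mathbf1_{N_X}$ and \eqref{eq:discrete-sol-structure-c}, and in fact only the easy direction of that kernel identity is needed. That direction requires $\matrixJ{\region{}}\mathbf1=0$ (interior) or $\matrixC{\region{}}\matrixl{\region{}}=0$ (exterior), together with $\matrixD{\region{}}\mathbf1=0$.

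The only delicate point is a standing hypothesis. The proposition as stated does not repeat the capacity assumption, yet $\dtn{\curvepolygonsec{m}}$ must be well defined, which needs $\matrixsingle{\region{}}$ to be invertible for bounded regions. I would state explicitly that we work under the hypothesis of Proposition~\ref{prop:discrete-dtn-structure}, or simply whenever the scheme is defined. Symmetry of each block $\solGalerkin{\region{}}$ follows from the symmetric SGBEM form, so no positive definiteness beyond well-definedness is actually used. With these ingredients we obtain $d_p^{\top}\Mv v_n^{m+1}=0$ for every phase $p$, including $p=0$.
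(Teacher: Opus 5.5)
Your proposal is correct and follows the paper's proof essentially verbatim: you test \eqref{eq:F1} with $z_p^{\top}$, use the Kronecker mixed-product rule to isolate $\mathbf 1_{\totaln}^{\top}\dtn{\curvepolygonsec{m}}W_p^{m+1}$, move the operator onto $\mathbf 1_{\totaln}$ by symmetry, and remove the term via $\mathbf 1_{\totaln}\in\ker\dtn{\curvepolygonsec{m}}$ from Proposition~\ref{prop:discrete-dtn-structure}. Your remark is also accurate: only symmetry and the easy kernel inclusion are needed, so the standing capacity hypothesis serves merely to make the bounded-region blocks well defined, and this is implicitly how the paper uses that proposition.
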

\begin{proof}
Left-multiplication of \eqref{eq:F1} by
$z_p^{\top}=\bvp e_p^{\top}\otimes\mathbf1_{\totaln}^{\top}\in \R^{1\times I_P\totaln}$ extracts the $p$-th phase block and gives
\begin{equation*}
  \begin{aligned}
0
&=z_p^{\top}
  (\identitymatrix{I_P}\otimes\dtn{\curvepolygonsec{m}})
  \checmicalDiscVec{m+1}
  -\frac1{\dt}z_p^{\top}\mathcal D_h\Mv\Bn \delta\Xmp\\
&=\mathbf1_{\totaln}^{\top}
  \dtn{\curvepolygonsec{m}}\checmicalDisc{p}^{m+1}
  -\frac1{\dt}
  (\mathcal D_h^{\top}z_p)^{\top}\Mv\Bn \delta\Xmp
=\bigl(\dtn{\curvepolygonsec{m}}\mathbf1_{\totaln}\bigr)^{\top}
  \checmicalDisc{p}^{m+1}
  -d_p^{\top}\Mv v_n^{m+1}.
  \end{aligned}
\end{equation*}
Here, the second equality uses the mixed-product rule for Kronecker products,
while the third uses the symmetry of $\dtn{\curvepolygonsec{m}}$.
We deduce from Proposition~\ref{prop:discrete-dtn-structure} that $\bvp 1_{\totaln} \in \ker\dtn{\curvepolygonsec{m}}$, and hence the first term of the right-hand side of the above identity vanishes.
This concludes the proof.
\end{proof}
We now show that the first variation of the discrete area of each phase
exactly vanishes, and that the resulting area defect equals the signed area of the
displacement polygon $\delta\Xmp$.
\begin{proposition}\label{prop:areadefect}
For $1\leq p\leq I_P-1$ and each region index $r\in\phase{p}$,
let $\regionDisc{r}(\bv X)$ denote its polygonal approximation of $\region{r}$.
Let $J_r^{\partial}$ be the number of closed polygonal boundary components and write
\[
 \partial\regionDisc{r}(\bv X)
 =\bigcup_{q=1}^{J_r^{\partial}}\Sigma_{r,q}^h(\bv X),
\]
and let $N_{r,q}^{\partial}$ be the number of vertices on
$\Sigma_{r,q}^h(\bv X)$.  We denote the corresponding ordered vertices,
obtained from the global geometry vector $\bv X$, by
\[
 \bv Y^{(r,q)}(\bv X)
 :=\bigl(\bv Y_j^{(r,q)}(\bv X)\bigr)_{j=1}^{N_{r,q}^{\partial}}.
\]
Each component is oriented so that $\regionDisc{r}(\bv X)$ lies locally on
its left; thus the outer boundary is counterclockwise and each hole boundary
is clockwise.  We define the signed area of $\regionDisc{r}(\bv X)$ by
\begin{equation}
  \label{eq:shoelace}
 \areaDiscRegion{r}(\bv X)
 :=\frac12\sum_{q=1}^{J_r^{\partial}}
 \sum_{j=1}^{N_{r,q}^{\partial}}
 \bv Y_j^{(r,q)}(\bv X)\times\bv Y_{j+1}^{(r,q)}(\bv X),
\end{equation}
where the vertex index is cyclic separately on each boundary component, i.e.,
$\bv Y_{N_{r,q}^{\partial}+1}^{(r,q)}(\bv X)=\bv Y_1^{(r,q)}(\bv X)$.
Here and below, $\bv a\times\bv b:=a_1b_2-a_2b_1$ denotes the scalar cross
product of $\bv a=(a_1,a_2)^\top$ and $\bv b=(b_1,b_2)^\top$ in $\R^2$.
Using this, the signed area of the $p$-th phase is defined by
\begin{equation}
  \label{eq:shoelace-p}
  \areaDisc{p}(\bv X) := \sum_{r\in\phase{p}}\areaDiscRegion{r}(\bv X).
\end{equation}
Then, every solution of \eqref{eq:F1} satisfies
\begin{equation}
  \label{eq:first-variation-area}
  \mathrm D\areaDisc{p}(\Xm)[\delta\Xmp] = -\dt\, d_p^\top\Mv v_n^{m+1} = 0.
\end{equation}
Consequently, it follows that
\begin{equation}\label{eq:areadefect}
  \areaDisc{p}(\Xmp) - \areaDisc{p}(\Xm) = \areaDisc{p}(\delta\Xmp).
\end{equation}
\end{proposition}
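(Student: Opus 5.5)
The plan is to separate the algebraic structure of the shoelace formula from the scheme. First, $\areaDisc{p}$ is a homogeneous quadratic form in $\bv X$. Indeed, each $\bv Y_j^{(r,q)}(\bv X)$ is a selection of a global vertex $\bv X_{\iota}$, so
\[
\areaDisc{p}(\bv X)=\tfrac12\,\mathcal B_p(\bv X,\bv X)
\]
for a bilinear form $\mathcal B_p$ obtained by summing $\bv Y_j\times\bv Y_{j+1}$ over the oriented boundary edges of all regions in $\phase{p}$. Polarization gives, for any $\bv X,\bv Z$,
\[
\areaDisc{p}(\bv X+\bv Z)=\areaDisc{p}(\bv X)+\mathrm D\areaDisc{p}(\bv X)[\bv Z]+\areaDisc{p}(\bv Z),
\]
where $\mathrm D\areaDisc{p}(\bv X)[\bv Z]=\tfrac12\bigl(\mathcal B_p(\bv X,\bv Z)+\mathcal B_p(\bv Z,\bv X)\bigr)$. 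Taking $\bv X=\Xm$ and $\bv Z=\delta\Xmp$, the identity \eqref{eq:areadefect} follows at once from \eqref{eq:first-variation-area}. So everything reduces to computing the first variation.

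Second, I compute $\mathrm D\areaDisc{p}$ edgewise. For a single oriented edge $[\bv a,\bv b]$, the contribution $\tfrac12\bv a\times\bv b$ has derivative
\[
\tfrac12\bigl(\bv Z_a\times\bv b+\bv a\times\bv Z_b\bigr).
\]
Regroup the sum over a closed component by vertex, using $\bv a\times\bv b=-\bv b\times\bv a$ and $\bv u\times\bv w=\bv u\cdot(-\bv w^{\perp})$. The coefficient of $\bv Z_j$ is then $\tfrac12(\bv Y_{j+1}-\bv Y_{j-1})$ crossed appropriately. This equals, up to a sign convention, $\tfrac12\bigl(h_{-}\,\bv n_{-}+h_{+}\,\bv n_{+}\bigr)$, where $\bv n_\pm$ are the unit outward normals of the two incident edges and $h_\pm$ their lengths. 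I then pass from region boundaries to curves. Each curve $\curvepolygon{i}$ bounding a region of phase $p$ appears with orientation agreeing or disagreeing with $\curvepolygon{i}$'s own orientation. The outward normal of the region $r\in\phase{p}$ equals $-m_{p,i}\normaldiscsec{i,k}{m}$: if phase $p$ is the "from" side, $m_{p,i}=-1$ and $\normal_i$ points out of $p$.

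Third, I sum over all regions $r\in\phase{p}$. Interfaces between two regions of the same phase do not occur, since each $\Gamma_i$ separates different phases. Hence each edge of a curve with $m_{p,i}\neq0$ is counted exactly once, and curves with $m_{p,i}=0$ drop out. Now group by curve vertex $(i,k)$ rather than by global vertex. The two incident edges of an interior vertex give
\[
-m_{p,i}\,\tfrac12\bigl(h_{i,k-1}\normaldiscsec{i,k-1}{m}+h_{i,k}\normaldiscsec{i,k}{m}\bigr)=-m_{p,i}\,\mu_{i,k}\normalmass{m}{i,k},
\]
by \eqref{eq:arcwise-mass} and \eqref{eq:nodal-normal-choices}. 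At an open endpoint (triple junction) only one edge of $\curvepolygon{i}$ is present, giving $-m_{p,i}\tfrac12 h_{i,1}\normaldiscsec{i,1}{m}=-m_{p,i}\mu_{i,1}\normalmass{m}{i,1}$, and similarly at $k=N_i$. Here the displacement $\bv Z_{\iota_i(k)}$ is shared by all incident curves, which is exactly what the restriction $P_h$ encodes. Thus
\[
\mathrm D\areaDisc{p}(\Xm)[\bv Z]=-\sum_{i,k}m_{p,i}\,\mu_{i,k}\,\bv Z_{\iota_i(k)}\cdot\normalmass{m}{i,k}=-(M_{p,h}\mathbf1_{N_a})^\top\Mv\Bn\bv Z.
\]
Since $d_p=\mathcal D_h^\top z_p=M_{p,h}P_h\mathbf1_{\totaln}=M_{p,h}\mathbf 1_{N_a}$ by \eqref{eq:discrete-incidence-action}, substituting $\bv Z=\delta\Xmp=\dt$-multiple of the velocity gives $-\dt\,d_p^\top\Mv v_n^{m+1}$. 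This vanishes by Proposition~\ref{prop:cons}.

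The main obstacle is the sign and orientation bookkeeping in the second and third steps. This covers matching the region orientation (region on the left, counterclockwise outer and clockwise holes) with $\normaldiscsec{i,k}{m}=-(\cdot)^\perp/h$, confirming that the resulting factor is $-m_{p,i}$ consistently with the stated minus sign, and checking that junction vertices contribute exactly the half-edge weights used in $\mu_{i,k}$. I would verify these on a single counterclockwise triangle with $m_{p,1}=+1$, where the outward normal should give positive area growth, and then argue that the general case follows edge by edge.
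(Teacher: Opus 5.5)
Your proposal is correct and follows essentially the paper's proof: differentiate the shoelace formula, regroup by vertex to get $\tfrac12\,\delta\bv Y_j\times(\bv Y_{j+1}-\bv Y_{j-1})$, rewrite this in curve-wise indexing as $-m_{p,i}\mu_{i,k}\,\delta\bv X_{\iota_i(k)}\cdot\normalmass{m}{i,k}$ (half-edge weights at junctions, and curves with $m_{p,i}=0$ drop out), invoke Proposition~\ref{prop:cons}, and close with the exact quadratic expansion, which you justify more explicitly via polarization than the paper does. For your triangle sanity check, note that $m_{p,1}=+1$ for a triangle enclosing phase $p$ means $\normal_1$ points into the triangle, so the curve is parametrized clockwise (since $\normal_i=-\vectortangent{i}^\perp$), opposite to the counterclockwise traversal used in \eqref{eq:shoelace}; this is exactly the case the factor $-m_{p,i}$ accounts for.
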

\begin{remark}
By the componentwise shoelace formula and the orientation specified above, it follows that
\begin{equation*}
 \mathcal L^2(\regionDisc{r}(\bv X)) = \areaDiscRegion{r}(\bv X).
\end{equation*}
\end{remark}
\begin{proof}[\textbf{Proof of Proposition~\ref{prop:areadefect}}]
For simplicity, set
\[
 \bv Y_j^{m,(r,q)}:=\bv Y_j^{(r,q)}(\Xm),
 \qquad
 \delta\bv Y_j^{m+1,(r,q)}:=\bv Y_j^{(r,q)}(\delta\Xmp).
\]
We differentiate \eqref{eq:shoelace} in the direction of $\delta\Xmp$ to obtain
\begin{equation*}
\begin{aligned}
 \mathrm D\areaDiscRegion{r}(\bv X)[\delta\Xmp]
 &:= \dde\areaDiscRegion{r}(\bv X + \varepsilon\delta\Xmp)\biggm|_{\varepsilon = 0}\\
 &=\frac12\sum_{q=1}^{J_r^{\partial}}
 \sum_{j=1}^{N_{r,q}^{\partial}}\left(
 \delta\bv Y_j^{m+1,(r,q)}\times\bv Y_{j+1}^{(r,q)}(\bv X)
 +\bv Y_j^{(r,q)}(\bv X)\times\delta\bv Y_{j+1}^{m+1,(r,q)}\right).
\end{aligned}
\end{equation*}
Evaluating this identity at $\bv X = \Xm$ together with the summation over $r\in\phase{p}$ gives
\begin{equation}
\begin{aligned}
 \mathrm D\areaDisc{p}(\Xm)[\delta\Xmp]
 &= \frac12\sum_{r\in\phase{p}}\sum_{q=1}^{J_r^{\partial}}\left(
 \sum_{j=1}^{N_{r,q}^{\partial}}\delta\bv Y_j^{m+1,(r,q)}\times\bv Y_{j+1}^{m,(r,q)}
 -\sum_{j=1}^{N_{r,q}^{\partial}}\delta\bv Y_j^{m+1,(r,q)}\times\bv Y_{j-1}^{m,(r,q)}\right)\\
 &=\frac12\sum_{r\in\phase{p}}\sum_{q=1}^{J_r^{\partial}}
 \sum_{j=1}^{N_{r,q}^{\partial}}
 \delta\bv Y_j^{m+1,(r,q)}\times
 \bigl(\bv Y_{j+1}^{m,(r,q)}-\bv Y_{j-1}^{m,(r,q)}\bigr),
\end{aligned}
\label{eq:phase-area-vertex-form}
\end{equation}
where the cyclic convention is applied separately for every pair $(r,q)$.
We next rewrite \eqref{eq:phase-area-vertex-form} in the curve-wise indexing
introduced in Section~\ref{sec:dio}.
At a non-junction vertex of $\curvepolygonsec{m}_i$,
the definition \eqref{eq:nodal-normal-choices} and the lumped weight
\eqref{eq:arcwise-mass} give
\begin{equation}\label{eq:area-gradient-normal}
 \mu_{i,k}\normalmass{m}{i,k}
 =\frac12\left(
 h_{i,k-1}\normaldiscsec{i,k-1}{m}
 +h_{i,k}\normaldiscsec{i,k}{m}
 \right).
\end{equation}
For every interface adjacent to phase $p$, we now introduce its phase-outward edge
normal by $\normaldiscsec{p;i,k}{m}$, and we observe that the jump convention in
\eqref{eq:Mp} gives
\begin{equation}\label{eq:phase-outward-normal}
 \normaldiscsec{p;i,k}{m}
 =-m_{p,i}\normaldiscsec{i,k}{m},
 \qquad m_{p,i}\in\{-1,1\},
\end{equation}
and $m_{p,i}=0$ when phase $p$ is not adjacent to $\curvepolygonsec{m}_i$.
Consequently, the phase-outward mass-lumped vertex normal satisfies
\begin{equation}\label{eq:phase-area-gradient-normal}
 \begin{aligned}
 \frac12\left(
 h_{i,k-1}\normaldiscsec{p;i,k-1}{m}
 +h_{i,k}\normaldiscsec{p;i,k}{m}
 \right)
 &=-m_{p,i}\mu_{i,k}\normalmass{m}{i,k}.
 \end{aligned}
\end{equation}
Suppose that vertex $j$ of the boundary component
$\Sigma_{r,q}^h$ in \eqref{eq:phase-area-vertex-form} corresponds to the
non-junction curve vertex $(i,k)$.  With the componentwise boundary orientation
introduced in \eqref{eq:shoelace}, we have
\[
 \bv Y_{j+1}^{m,(r,q)}-\bv Y_{j-1}^{m,(r,q)}
 =R_{\frac{\pi}{2}}
 \left(
 h_{i,k-1}\normaldiscsec{p;i,k-1}{m}
 +h_{i,k}\normaldiscsec{p;i,k}{m}
 \right).
\]
On noting that
$\bv a\times\bv b=\bv a^{\top}R_{\frac{\pi}{2}}^{\top}\bv b$ and
$R_{\frac{\pi}{2}}^{\top}R_{\frac{\pi}{2}}=I_2$, it follows from
\eqref{eq:phase-area-gradient-normal} that
\begin{equation}
\begin{aligned}
 \frac12\delta\bv Y_j^{m+1,(r,q)}\times
 \left(\bv Y_{j+1}^{m,(r,q)}-\bv Y_{j-1}^{m,(r,q)}\right)
 &= \frac12(\delta\bv Y_j^{m+1,(r,q)})^{\top}
 R_{\frac{\pi}{2}}^{\top}R_{\frac{\pi}{2}}
 \left(
 h_{i,k-1}\normaldiscsec{p;i,k-1}{m}
 +h_{i,k}\normaldiscsec{p;i,k}{m}
 \right) \\
 &=-m_{p,i}\mu_{i,k}\,
 \delta\bv X_{\iota_i(k)}^{m+1}\cdot\normalmass{m}{i,k}.
 \label{eq:phase-area-local-contribution}
\end{aligned}
\end{equation}
We can easily perform a similar computation to obtain the formula \eqref{eq:phase-area-local-contribution} at a triple junction.
Indeed, the junction is an endpoint of the two curves which are part of
$\partial\regionDisc{r}$ that meet there, so \eqref{eq:arcwise-mass} and
\eqref{eq:nodal-normal-choices} give them the weight $\tfrac12h$ and the adjacent edge
normal, while the third incident curve separates the two phases other than $p$ and
therefore carries $m_{p,i}=0$.
Therefore, we deduce from \eqref{eq:phase-area-vertex-form} that
\begin{equation}
 \label{eq:phase-area-matrix-form}
\begin{aligned}
 \mathrm D\areaDisc{p}(\Xm)[\delta\Xmp]
 &=-\sum_{i=1}^{I_S}\sum_{k=1}^{N_i}
 m_{p,i}\,\mu_{i,k}\,
 \delta\bv X_{\iota_i(k)}^{m+1}\cdot\normalmass{m}{i,k}
 =-d_p^{\top}\Mv\Bn\delta\Xmp,
\end{aligned}
\end{equation}
where on recalling the arc-wise flattening map $\alpha(i,k)$ defined by \eqref{eq:flattening-map},
\[
 (d_p)_{\alpha(i,k)}=m_{p,i},\qquad
 (\Mv)_{\alpha(i,k),\alpha(i,k)}=\mu_{i,k},\qquad
 (\Bn\delta\Xmp)_{\alpha(i,k)}
 =\delta\bv X_{\iota_i(k)}^{m+1}\cdot\normalmass{m}{i,k}.
\]
We deduce from the relation $\Bn\delta\Xmp=\dt\,v_n^{m+1}$,
and Proposition~\ref{prop:cons},
the formula \eqref{eq:phase-area-matrix-form} yields
\begin{equation*}
 \mathrm D\areaDisc{p}(\Xm)[\delta\Xmp]
 =-\dt\,d_p^{\top}\Mv v_n^{m+1}
 =0,
\end{equation*}
which proves \eqref{eq:first-variation-area}.
Moreover, since $\areaDisc{p}$ is quadratic, we have
\begin{equation*}
 \areaDisc{p}(\Xmp)-\areaDisc{p}(\Xm)
 =\mathrm D\areaDisc{p}(\Xm)[\delta\Xmp]
   +\areaDisc{p}(\delta\Xmp)
 =\areaDisc{p}(\delta\Xmp),
\end{equation*}
which proves \eqref{eq:areadefect}.
The proof is now complete.
\end{proof}

As a conclusion of this section, we shall characterize curve-network conditions for the solvability of the system \eqref{eq:F1}, \eqref{eq:F2}, and \eqref{eq:E2}.
\begin{definition}[Solvability-admissible curve network]
  \label{dfn:admissible-curve-network}
A non-degenerate polygonal curve network $\curvepolygon{}$ is called
\emph{solvability-admissible} if the following conditions hold:
\begin{enumerate}\itemsep0pt
  \item[(i)] every curve has a non-junction vertex, and the corresponding
  lumped masses and nodal normals defined in \eqref{eq:nodal-normal-choices}
  are nonzero; equivalently, for every $1\leq i\leq I_S$,
  \begin{equation}\label{eq:solv-admissible-nonjunction}
    \mathcal N_i^{\circ}
    :=\left\{\bv X_{i,k}:1\leq k\leq N_i\right\}
      \setminus\bigcup_{s=1}^{I_T}\junctionDisc{s}\neq\emptyset,
    \qquad
    \mu_{i,k}\neq0,\qquad\normalmass{m}{i,k}\neq\bv{0}
    \quad\text{for every }\bv X_{i,k}\in\mathcal N_i^{\circ}.
  \end{equation}
  \item[(ii)] on every connected component $C$ of the curve network, the
  nodal normals at non-junction vertices span $\R^2$, that is,
  \begin{equation}\label{eq:solv-admissible-normal-span}
    \operatorname{span}\left\{\normalmass{m}{i,k}\biggm|\,
      \bv X_{i,k}\in C\setminus\bigcup_{s=1}^{I_T}\junctionDisc{s}\right\}
    =\R^2.
  \end{equation}
  \item[(iii)] Let the finite undirected phase-adjacency graph $G_P=(V_P,E_P)$ defined by
        \[
          V_P :=\{0,\ldots,I_P-1\},
        \]
        and $\{p,r\}\in E_P$ if and only if $p\neq r$ and there exists a curve $\curvepolygon{i}$ separating the $p$-th and $r$-th phases.
        Then, the graph $G_P$ is connected. In other words, for every $p,\,r\in V_P$ with $p\neq r$, there exists a sequence $\{p_0,p_1,\ldots,p_k\}$
        for which $p_0 = p$, $p_k = r$ and $\{p_{j-1}, p_j\}\in E_P$ for every $1\leq j\leq k$.
\end{enumerate}
\end{definition}

\begin{theorem}[network unique solvability]\label{prop:solv}
Assume that $\curvepolygonsec{m}$ satisfies the hypothesis of
Proposition~\ref{prop:discrete-dtn-structure} and is solvability-admissible.
Impose the gauge $g^{\top}\checmicalDiscVec{m+1}=0$, where
$g^{\top}(\mathbf1_{I_P}\otimes\mathbf1_{\totaln})\neq0$.
Then, for every $\dt>0$, the gauge-bordered system associated with
\eqref{eq:F1}--\eqref{eq:E2} has exactly one solution $(\checmicalDiscVec{m+1},\kappa^{m+1},\bv X^{m+1})$.
\end{theorem}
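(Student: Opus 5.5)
Since the bordered system \eqref{eq:bordered-block-system} is square and linear in $(\checmicalDiscVec{m+1},\delta\bv X^{m+1},\lambda)$, with $\kappa^{m+1}$ recovered from \eqref{eq:F2}, it suffices to show that the homogeneous system has only the trivial solution. So let $(\bvp W,\delta\bv X,\lambda)$ satisfy
\[
 \dt(\identitymatrix{I_P}\otimes\dtn{\curvepolygonsec{m}})\bvp W-C_h^m\delta\bv X+\lambda g=0,\qquad
 -(C_h^m)^\top\bvp W-A_\gamma^m\delta\bv X=0,\qquad g^\top\bvp W=0.
\]
The plan is an energy argument followed by a kernel chase.

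\textbf{Energy step.} Test the first equation with $\bvp W$ and the second with $\delta\bv X$, then subtract. This gives
\[
 \dt\,\mathcal Q(\bvp W)+(\delta\bv X)^\top A_\gamma^m\delta\bv X+\lambda g^\top\bvp W=0 .
\]
The gauge term vanishes because $g^\top\bvp W=0$. $A_\gamma^m$ is symmetric positive semi-definite, since it is a sum of edgewise quadratic forms $\bv\ell_j^\top\widetilde G_{i,l}\bv\ell_j/s_{i,l}(\bv\ell_j^m)$ with positive weights by non-degeneracy. $\mathcal Q\ge0$ by Proposition~\ref{prop:discrete-dtn-structure}. Hence both terms vanish.

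\textbf{Consequences of the energy step.} First, $\mathcal Q(\bvp W)=0$ and the exact kernel in Proposition~\ref{prop:discrete-dtn-structure} give $W_p=c_p\mathbf1_{\totaln}$. Second, $\bv\ell_j(\delta\bv X)=0$ on every edge, because $\widetilde G_{i,l}$ is positive definite. So $\delta\bv X$ is constant on each curve. Junction compatibility then makes it a constant vector $\bv a_C$ on each connected component $C$, so $A_\gamma^m\delta\bv X=0$.

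\textbf{Determining $\lambda$ and $\bvp W$.} The first equation reduces to $C_h^m\delta\bv X=\lambda g$. Test it against $\mathbf1_{I_P}\otimes\mathbf1_{\totaln}$. By \eqref{eq:mph-sum-zero}, $\mathcal D_h^\top(\mathbf1_{I_P}\otimes\mathbf1_{\totaln})=\sum_pM_{p,h}P_h\mathbf1=0$, so the left side vanishes. Since $g^\top(\mathbf1_{I_P}\otimes\mathbf1_{\totaln})\neq0$, this forces $\lambda=0$. Next, the second equation gives $\mathcal D_h^\top\bvp W=0$. On a curve separating phases $p$ and $r$ this says $c_r-c_p=0$. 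Connectivity of $G_P$ (condition (iii)) then forces all $c_p$ to equal a common $c$. The gauge gives $c\,g^\top(\mathbf1_{I_P}\otimes\mathbf1_{\totaln})=0$, so $c=0$ and $\bvp W=0$.

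\textbf{Main obstacle: $\delta\bv X=0$.} It remains to extract $\bv a_C=0$ from $C_h^m\delta\bv X=\mathcal D_h\Mv B_n\delta\bv X=0$. I would read this phase by phase: for each $p$,
\[
 P_h^\top M_{p,h}\Mv B_n\delta\bv X=0\in\R^{\totaln}.
\]
Take a non-junction vertex $(i,k)$ of component $C$. It belongs to exactly one curve, so the corresponding row involves only the entry $(i,k)$. Choosing $p=\indexphaseto{i}$, so that $m_{p,i}=\pm1$, the row reads
\[
 \pm\mu_{i,k}\,\bv a_C\cdot\normalmass{m}{i,k}=0 .
\]
By condition (i), $\mu_{i,k}\neq0$, so $\bv a_C\cdot\normalmass{m}{i,k}=0$ for all non-junction vertices in $C$. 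By condition (ii) these normals span $\R^2$, hence $\bv a_C=0$. Thus $\delta\bv X=0$, and $\kappa=\mathcal D_h^\top\bvp W=0$.

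The step needing most care is this last one. Junction rows mix several curves and must be avoided, which is why only non-junction rows are used. One must also check that $P_h^\top$ does not merge distinct non-junction curve entries; it does not, since $\iota_i$ is injective and collisions occur only at junctions.
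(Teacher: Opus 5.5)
Your route is the paper's: the energy identity gives $W_p=c_p\mathbf 1_{\totaln}$ and a translation $\vec a_C$ on each component. Testing with $\mathbf 1_{I_P}\otimes\mathbf 1_{\totaln}$ removes $\lambda$. Non-junction rows with condition (ii) remove the translations, and connectivity of $G_P$ plus the gauge remove the constants. Getting $\lambda=0$ after the energy step, and dropping the gauge term via $g^{\top}\mathbf W=0$ rather than $\lambda=0$, is a harmless reordering.

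There is, however, one step that is asserted without justification and does not follow from linear algebra alone: ``the second equation gives $\mathcal D_h^{\top}\mathbf W=0$.'' Once $A_\gamma^m\delta\vec X=0$, the second equation only says $(C_h^m)^{\top}\mathbf W=B_n^{\top}M_v\kappa=0$ with $\kappa:=\mathcal D_h^{\top}\mathbf W\in\R^{N_a}$. The row of $B_n^{\top}M_v\kappa$ at a global vertex $j$ is $\sum_{(i,k):\,\iota_i(k)=j}\mu_{i,k}\kappa_{i,k}\normalmass{m}{i,k}$. The map $\kappa\mapsto B_n^{\top}M_v\kappa$ is not injective in general. At a junction the contributions of three curves are added and may cancel. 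Wherever a nodal normal $\normalmass{m}{i,k}$ vanishes, the entry $\kappa_{i,k}$ does not appear at all. This is exactly the mixing you carefully avoid in the $\delta\vec X$ step, but here you skip it.

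The missing argument is the one the paper gives. Since every $W_p$ is constant, $\kappa$ is constant on each curve: $\kappa_i=m_{r,i}(c_r-c_p)\mathbf 1_{N_i}$ for a curve separating phases $p$ and $r$. Condition (i) provides on every curve a non-junction vertex with $\mu_{i,k}\neq0$ and $\normalmass{m}{i,k}\neq\vec 0$. There the row reduces to $\mu_{i,k}\kappa_{i,k}\normalmass{m}{i,k}=\vec 0$, so $\kappa_{i,k}=0$, hence $\kappa_i=0$ and $c_r=c_p$.

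This is the only place where two requirements of condition (i) are used: a non-junction vertex on \emph{every} curve, and a nonzero nodal normal there. Your write-up never invokes either. Condition (ii) cannot substitute, since spanning $\R^2$ on a connected component says nothing about an individual curve. With this argument inserted, the rest of your proof goes through as written.
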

\begin{proof}
Write $q:=\mathbf1_{I_P}\otimes\mathbf1_{\totaln} = \mathbf{1}_{I_P\totaln}\in\R^{I_P\totaln}$.
Assume that $(\checmicalDiscVec{},\kappa, \delta\bv X, \lambda)$ solves the following homogeneous system:
\begin{subequations}\label{eq:homogeneous-system}
\begin{align}
 (\identitymatrix{I_P}\otimes\dtn{\curvepolygonsec{m}})\checmicalDiscVec{}
 -\dt^{-1}\mathcal D_h\Mv\Bn\delta \bv X+\lambda g&=0,\label{eq:homogeneous-system-a}\\
 \mathcal D_h^{\top}\checmicalDiscVec{}-\kappa&=0,\label{eq:homogeneous-system-b}\\
 \Bn^{\top}\Mv\kappa+A^m_\gamma\delta \bv X&=0,\label{eq:homogeneous-system-c}\\
 g^{\top}\checmicalDiscVec{}&=0.\label{eq:homogeneous-system-d}
\end{align}
\end{subequations}
We see from Proposition~\ref{prop:discrete-dtn-structure} that $\dtn{\curvepolygonsec{m}}$ is symmetric and $\mathbf1_{\totaln}\in\ker\dtn{\curvepolygonsec{m}}$, and thus,
\[
\vectorunit{\totaln}^\top\dtn{\curvepolygonsec{m}} = ((\dtn{\curvepolygonsec{m}})^\top\vectorunit{\totaln})^{\top} = (\dtn{\curvepolygonsec{m}}\vectorunit{\totaln})^\top = \vectorzero{\totaln}^\top.
\]
Hence, we have $q^\top(\identitymatrix{I_P}\otimes\dtn{\curvepolygonsec{m}}) = 0$.
Meanwhile, by \eqref{eq:discrete-incidence-action} and \eqref{eq:mph-sum-zero},
we have $\mathcal D_h^\top q=0$, and hence $q^\top\mathcal D_h=0$.
Left-multiplying \eqref{eq:homogeneous-system-a} by $q^{\top}$ gives
\[
  0=q^{\top}\left[
  (\identitymatrix{I_P}\otimes\dtn{\curvepolygonsec{m}})\checmicalDiscVec{}
  -\dt^{-1}\mathcal D_h\Mv\Bn\delta\bv X+\lambda g\right]
  =\lambda q^{\top}g.
\]
Since $q^{\top}g=g^{\top}q\neq0$ by the gauge condition \eqref{eq:homogeneous-system-d}, it follows that $\lambda=0$.

We next left-multiply \eqref{eq:homogeneous-system-a} and \eqref{eq:homogeneous-system-c} by
$(\checmicalDiscVec{})^{\top}$ and $(\delta\bv X)^{\top}$, respectively,
then we obtain
\begin{equation}
  \label{eq:uniqueness-1}
\begin{aligned}
 0
 &=\sum_{p=0}^{I_P-1}(\checmicalDisc{p})^{\top}
   \dtn{\curvepolygonsec{m}}\checmicalDisc{p}
   -\dt^{-1}(\checmicalDiscVec{})^{\top}
     \mathcal D_h\Mv\Bn\delta\bv X,\\
 (\delta\bv X)^{\top}A^m_\gamma\delta\bv X
 &=-(\delta\bv X)^{\top}\Bn^{\top}\Mv\kappa
 = -(\delta\bv X)^\top \Bn^{\top}\Mv\mathcal D_h^\top\checmicalDiscVec{}
  =-(\checmicalDiscVec{})^{\top}\mathcal D_h\Mv\Bn\delta\bv X,
\end{aligned}
\end{equation}
where we have invoked \eqref{eq:homogeneous-system-b} and the symmetry of $\Mv$.
Combining the two equations in \eqref{eq:uniqueness-1} yields
\begin{equation}\label{eq:network-solv-energy}
 \sum_{p=0}^{I_P-1}(\checmicalDisc{p})^{\top}
 \dtn{\curvepolygonsec{m}}\checmicalDisc{p}
 +\dt^{-1}(\delta\bv X)^{\top}A^m_\gamma\delta\bv X=0.
\end{equation}
We can observe that two terms are nonnegative in the left-hand side of \eqref{eq:network-solv-energy}.
Indeed, Proposition~\ref{prop:discrete-dtn-structure} gives this property for the first
sum, while the definition of $a_\gamma^m$ gives
\[
 (\delta\bv X)^{\top}A^m_\gamma\delta\bv X
 =\sum_{i=1}^{I_S}c_i\sum_{j\in\edgesDisc{m}{i}}\sum_{l=1}^{L_i}
 \frac{\bv\ell_j(\delta\bv X)^{\top}\widetilde G_{i,l}
       \bv\ell_j(\delta\bv X)}
      {s_{i,l}(\bv\ell_j^m)}\geq0.
\]
Consequently, \eqref{eq:network-solv-energy} implies that
\begin{equation}
  \label{eq:uniqueness-2}
  \begin{aligned}
 (\checmicalDisc{p})^{\top}\dtn{\curvepolygonsec{m}}
 \checmicalDisc{p}&=0,\qquad 0\leq \forall p<I_P,\\
 \bv\ell_j(\delta\bv X) &=0,
 \qquad\forall j\in\edgesDisc{m}{i},\ 1\leq \forall i\leq I_S.
  \end{aligned}
\end{equation}
Since $\dtn{\curvepolygonsec{m}}$ is symmetric and positive semi-definite,
the first equation of \eqref{eq:uniqueness-2} implies that $\checmicalDisc{p}\in\ker\dtn{\curvepolygonsec{m}}$.
Therefore, Proposition~\ref{prop:discrete-dtn-structure} gives
\begin{equation}
  \label{eq:uniqueness-3}
 \checmicalDisc{p}=c_p\vectorunit{\totaln},
 \qquad 0\leq \forall p<I_P.
\end{equation}
For every connected component $C$ of the curve network,
the second equation of \eqref{eq:uniqueness-2} implies that
\begin{equation}\label{eq:componentwise-translation}
 \delta\bv X\lfloor_C\equiv \bv t_C\in\R^2.
\end{equation}
We invoke $\lambda=0$ and \eqref{eq:uniqueness-3},
and then it follows that for every non-junction row $(p,\iota_i(k))$,
\begin{equation}\label{eq:translation-kernel}
 0=\bigl[\mathcal D_h\Mv\Bn\delta\bv X\bigr]_{p,\iota_i(k)}
  =m_{p,i}\mu_{i,k}\,\bv t_C\cdot\normalmass{m}{i,k}.
\end{equation}
Hence, we have
\begin{equation}\label{eq:translation-vanishes}
 \bv t_C\perp
 \operatorname{span}\!\left\{\normalmass{m}{i,k}\biggm|
   \bv X_{i,k}\in C\ \text{is not a junction}\right\}
 =\R^2.
\end{equation}
Thus, due to the conditions \eqref{eq:solv-admissible-nonjunction} and \eqref{eq:solv-admissible-normal-span},
it must hold that $\bv t_C = \bv 0$ for every connected component $C$, proving $\delta\bv X = 0$.

We deduce from \eqref{eq:homogeneous-system-c} that $\Bn^{\top}\Mv\kappa=0$.
For each curve $\curvepolygonsec{m}_i$, due to the condition \textup{(i)} in
Definition~\ref{dfn:admissible-curve-network}, there exists a non-junction vertex
$\bv X_{i,k}\in\mathcal N_i^{\circ}$. For such $k$, we have
\begin{equation}\label{eq:phase-constants-propagate}
 0=\bigl[\Bn^{\top}\Mv\kappa\bigr]_{\iota_i(k)}
   =\mu_{i,k}\kappa_{i,k}\normalmass{m}{i,k}.
\end{equation}
Since $\mu_{i,k}\neq0$ and $\normalmass{m}{i,k}\neq\bv0$, this gives
$\kappa_{i,k}=0$ at the chosen non-junction vertex.

Suppose that $\curvepolygonsec{m}_i$ separates phases $\phase{p}$ and $\phase{r}$.
Since \eqref{eq:uniqueness-3} makes every phase trace constant,
\eqref{eq:homogeneous-system-b} and the incidence convention give
\[
 \kappa_i
 =\bigl(m_{p,i}c_p+m_{r,i}c_r\bigr)\mathbf1_{N_i}
 =m_{r,i}(c_r-c_p)\mathbf1_{N_i},
 \qquad m_{r,i}=-m_{p,i}\in\{-1,1\}.
\]
Evaluating this identity at the chosen non-junction vertex, where
$\kappa_{i,k}=0$, yields $c_r=c_p$ and therefore $\kappa_i=0$ on the whole
curve.  Condition \textup{(iii)} in Definition~\ref{dfn:admissible-curve-network} stresses that
\[
 c_0=\cdots=c_{I_P-1}=:c \in\R,
\]
and we deduce from \eqref{eq:uniqueness-3} that $\checmicalDiscVec{} = c\vectorunit{I_P\totaln}$.

The gauge condition \eqref{eq:homogeneous-system-d} gives
\begin{equation}\label{eq:phase-gauge-kernel}
 0=g^{\top}\checmicalDiscVec{}=c\,g^{\top}q,
\end{equation}
which yields $c = 0$, and thus $\checmicalDiscVec{}\equiv\vectorzero{I_P\totaln}$ on recalling that $g^\top q \neq 0$.

Finally, we have shown that
\[
 (\checmicalDiscVec{},\kappa,\delta\bv X,\lambda)=(\vectorzero{I_P\totaln},\vectorzero{N_a},\vectorzero{2\totaln},0).
\]
The system \eqref{eq:homogeneous-system-a}--\eqref{eq:homogeneous-system-d} is linear and square, namely
the unknown $(\checmicalDiscVec{},\kappa,\delta\bv X,\lambda)$ and the equations both
number $I_P\totaln+N_a+2\totaln+1$.
Its coefficient matrix is therefore nonsingular, and
the corresponding inhomogeneous system has exactly one solution, from which
$\bv X^{m+1}=\bv X^m+\delta\bv X^{m+1}$ is determined as well.
The proof is now complete.
\end{proof}

\section{Numerical experiments}\label{sec:numerics}
\newcommand{\adfmOne}{2.060\times10^{-15}}
\newcommand{\adfmOneFlux}{2.044\times10^{-15}}
\newcommand{\adfmOneRatio}{1.01}
\newcommand{\adfmOneDrift}{8.940\times10^{-5}}
\newcommand{\adfmTen}{1.487\times10^{-15}}
\newcommand{\adfmTenFlux}{1.569\times10^{-15}}
\newcommand{\adfmTenRatio}{0.95}
\newcommand{\adfmTenDrift}{8.815\times10^{-4}}
\newcommand{\adfmHun}{1.517\times10^{-15}}
\newcommand{\adfmHunFlux}{1.627\times10^{-15}}
\newcommand{\adfmHunRatio}{0.93}
\newcommand{\adfmHunDrift}{3.809\times10^{-3}}
\newcommand{\adfmThou}{3.235\times10^{-15}}
\newcommand{\adfmThouFlux}{3.203\times10^{-15}}
\newcommand{\adfmThouRatio}{1.01}
\newcommand{\adfmThouDrift}{9.742\times10^{-3}}
\newcommand{\adfmMax}{3.235\times10^{-15}}
\newcommand{\adfmMaxRel}{4.119\times10^{-15}}
\newcommand{\adfmRatioMax}{1.01}
\newcommand{\adfmDriftMax}{9.742\times10^{-3}}
\newcommand{\adfmRuns}{4}

We use the square-root class \eqref{eq:gamma} of anisotropies
which may differ on each interface; precisely speaking, we set
\[
  c_i=c,
  \qquad
  G_{i,l}=R_{\varphi_l}\operatorname{diag}(1,\delta^2)
  R_{\varphi_l}^\top,
  \qquad \varphi_l=\frac{l\pi}{L_i},
\]
where for each label of anisotropy in Table~\ref{tab:aniso},
the parameters $\delta$ and $c$ are determined so that the following conditions are fulfilled.
\begin{equation}\label{eq:pin}
  \frac{\max_\theta\gamma}{\min_\theta\gamma}
  =\text{target ratio},
  \qquad
  \frac{\max_\theta\gamma+\min_\theta\gamma}{2}=1.
\end{equation}
\begin{table}[!ht]
\centering
\small
\begin{tabular}{llccll}
\toprule
Label & Wulff shape & $L$ & Ratio & $\delta$ & $c$ \\
\midrule
$\gamma_E$ & ellipse & $1$ & $1.30$ & $0.76923076923076916$ & $1.1304347826086958$ \\
$\gamma_S$ & square & $2$ & $1.30$ & $0.092500733601242183$ & $0.79594016795295963$ \\
$\gamma_H$ & hexagon & $3$ & $1.11$ & $0.075828991533126139$ & $0.52381703438571303$ \\
\bottomrule
\end{tabular}
\caption{Three anisotropies used in the numerical experiments and associated parameters.}
\label{tab:aniso}
\end{table}

In every experiment of this section, the polygonal boundary of each bounded region has
diameter at most $1.65$ at every time level, so that Lemma~\ref{lem:diam-cap} implies that
the logarithmic capacity on those boundaries is less than $1$, and the hypothesis of
Proposition~\ref{prop:discrete-dtn-structure} is satisfied.

\subsection{Energy dissipation}\label{subsec:dtsweep}
Here, we confirm that the proposed fully discrete scheme \eqref{eq:fully-discrete-scheme} has the energy dissipation property
regardless the value of time step is set large as shown in Theorem~\ref{thm:main}.
In this experiment, we suppose that two regions occupy distinct phases.
The anisotropy $\gamma_S$ are assigned to both interfaces.
We set
\[
\begin{aligned}
  I_P&=I_R=3, &I_S = 2,\ I_T = 0,\ N&=32,\\
  \dt_0&=\frac{0.005}{1024},&
  \dt&\in\{1,10,100,1000\}\dt_0,\qquad T=9000\dt_0 .
\end{aligned}
\]
We summarize the maximum of the variation of the energies of consecutive two discrete curve networks.
Namely, we set $$\Delta E_{\max}:=\max_{0\leq m< M}(E_h(\Xmp)-E_h(\Xm)),$$
and we report its values with respect to $\dt$ as follows.

\begin{center}
\begin{tabular}{lr}
\toprule
$\dt$ & $\Delta E_{\max}$ \\
\midrule
$\dt_0$     & $-1.60\times10^{-12}$ \\
$10\dt_0$   & $-1.15\times10^{-8}$ \\
$100\dt_0$  & $-7.22\times10^{-7}$ \\
$1000\dt_0$ & $-1.19\times10^{-3}$ \\
\bottomrule
\end{tabular}
\end{center}

According to the outcome of this experiment, the discrete energy $E_h(\Xm)$ is non-increasing at any time and its step size.
Moreover, each distinct component evolves to a square-like curves which is consistent with the Wulff shape of the anisotropy (see Figure~\ref{fig:m6:a}).
\begin{figure}[!ht]
  \centering
  \includegraphics[width=\textwidth]{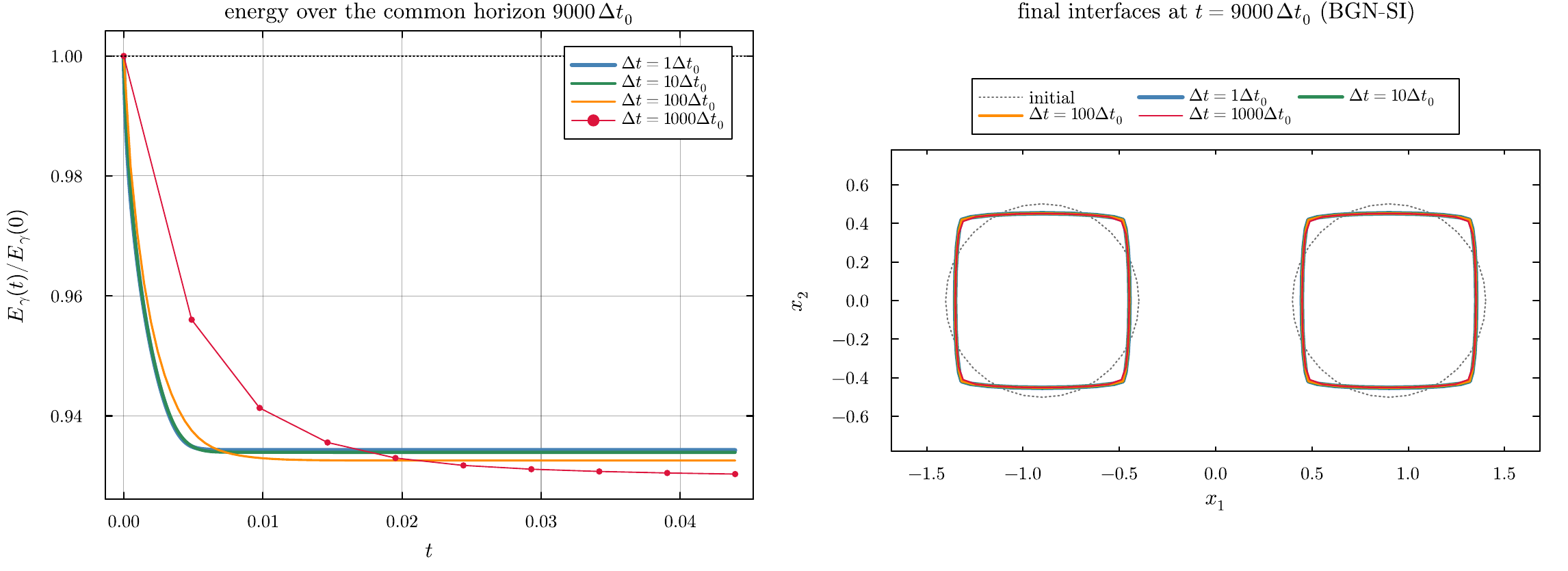}
  \caption{Time-step sweep: evolution of relative energy (left) and two components (right).}
  \label{fig:m6:a}
\end{figure}

\subsection{Area-defect identity}\label{subsec:areadefect}
Here, for the same experiment carried out in Section~\ref{subsec:dtsweep},
we measure the area defect defined by
\[
  r:=\max_{\substack{0\leq m< M\\1\leq p<I_P}}
  \left|\areaDisc{p}(\Xmp)-\areaDisc{p}(\Xm)-\areaDisc{p}(\delta \Xmp)\right|,
\]
where $\delta\Xmp :=\Xmp-\Xm$, and we recall from \eqref{eq:shoelace-p} that
$\areaDisc{p}(\Xm)$ is the signed polygonal area form of phase $p$ at time step $m$.
We compare it with
\[
  q:=\dt\max_{\substack{0\leq m< M\\1\leq p < I_P}}
  \left|d_p^\top\Mv v_n^{m+1}\right|,
\]
which is shown to be the first variation of the discrete area (see the formula \eqref{eq:first-variation-area}).
Define the terminal area drift as the relative error of the discrete area at the final time step $M$ compared to one at the initial data.
Precisely, we let
  \[
  d^M_A:=
  \max_{\substack{1\leq p<I_P}}
  \frac{\left|\areaDisc{p}(\bv X^M)-\areaDisc{p}(\bv X^0)\right|}
  {\left|\areaDisc{p}(\bv X^0)\right|}.
  \]
By \eqref{eq:first-variation-area}, the quantities $r$ and $q$ agree in exact
arithmetic, so that their ratio in Table~\ref{tab:m6:b}
measures the floating-point error alone.
Moreover, Proposition~\ref{prop:areadefect} asserts that their common value vanishes, and it is
found at the round-off level in every case.
Meanwhile, the terminal area drift $d^M_A$ is higher than them, although this error is also expected due to the residual $\areaDisc{p}(\delta\Xmp)$.

\begin{table}[!ht]
  \centering
  {\begin{tabular}{lrrrrr}
\toprule
$\dt$ & $T / \dt$ & $r$ & $q$ & $r / q$ & $d^M_A$ \\
\midrule
$\dt_0$ & 9000 & $2.060\times10^{-15}$ & $2.044\times10^{-15}$ & $1.01$ & $8.940\times10^{-5}$ \\
$10\dt_0$ & 900 & $1.487\times10^{-15}$ & $1.569\times10^{-15}$ & $0.95$ & $8.815\times10^{-4}$ \\
$100\dt_0$ & 90 & $1.517\times10^{-15}$ & $1.627\times10^{-15}$ & $0.93$ & $3.809\times10^{-3}$ \\
$1000\dt_0$ & 9 & $3.235\times10^{-15}$ & $3.203\times10^{-15}$ & $1.01$ & $9.742\times10^{-3}$ \\
\bottomrule
\end{tabular}
}
  \caption{Time-step sweep: area-defect residual, constraint residual, and terminal area drift.}
  \label{tab:m6:b}
\end{table}

\subsection{Coarsening with different anisotropy}\label{subsec:quadost}
Here, we carry out a numerical experiment for a different curve network composed of four components;
two of it share a common anisotropy $\gamma_1$, whereas other two share a different anisotropy $\gamma_2$.
In this experiment, we set $I_P = 3$, $I_R = 5$, $I_S = 4$, and $I_T = 0$.
The initial data of each two components is the union of two circles whose radii are different.
In this experiment, we set
\[
  N=64,\qquad \dt=100\dt_0,\qquad
  (\gamma_1,\gamma_2)=(\gamma_S,\gamma_H).
\]
Figure~\ref{fig:m6:c} shows the interface evolution and the corresponding diagnostics,
that is, the evolution of the discrete energy and discrete area of each region in the curve network.
We observe that the droplets change their shape to squared and hexagonal ones which correspond to the assigned anisotropies.
Moreover, the smaller drops are going to vanish, whereas the areas of the larger ones increase gradually.
We terminate the iteration at the first step at which the discrete area
$\areaDisc{}$ of some component falls below one fifth of its initial value, so that the
run ends strictly before any topological change. This occurs at step $454$ of the
planned $3000$, that is $t=0.2217$, and is triggered by the discrete area of the small $\gamma_H$ drop
($9.989\times10^{-2}$ against the threshold $1.005\times10^{-1}$); the discrete area of the small $\gamma_S$
drop is then still at $21.7\%$ of its initial area.
The two phase totals are flat on the scale of Figure~\ref{fig:m6:c}; at the
terminal step, their relative drifts are $-3.33\times10^{-3}$ for the $\gamma_S$ phase and
$-7.96\times10^{-4}$ for the $\gamma_H$ phase.

\begin{figure}[!ht]
  \centering
  \includegraphics[width=\textwidth]{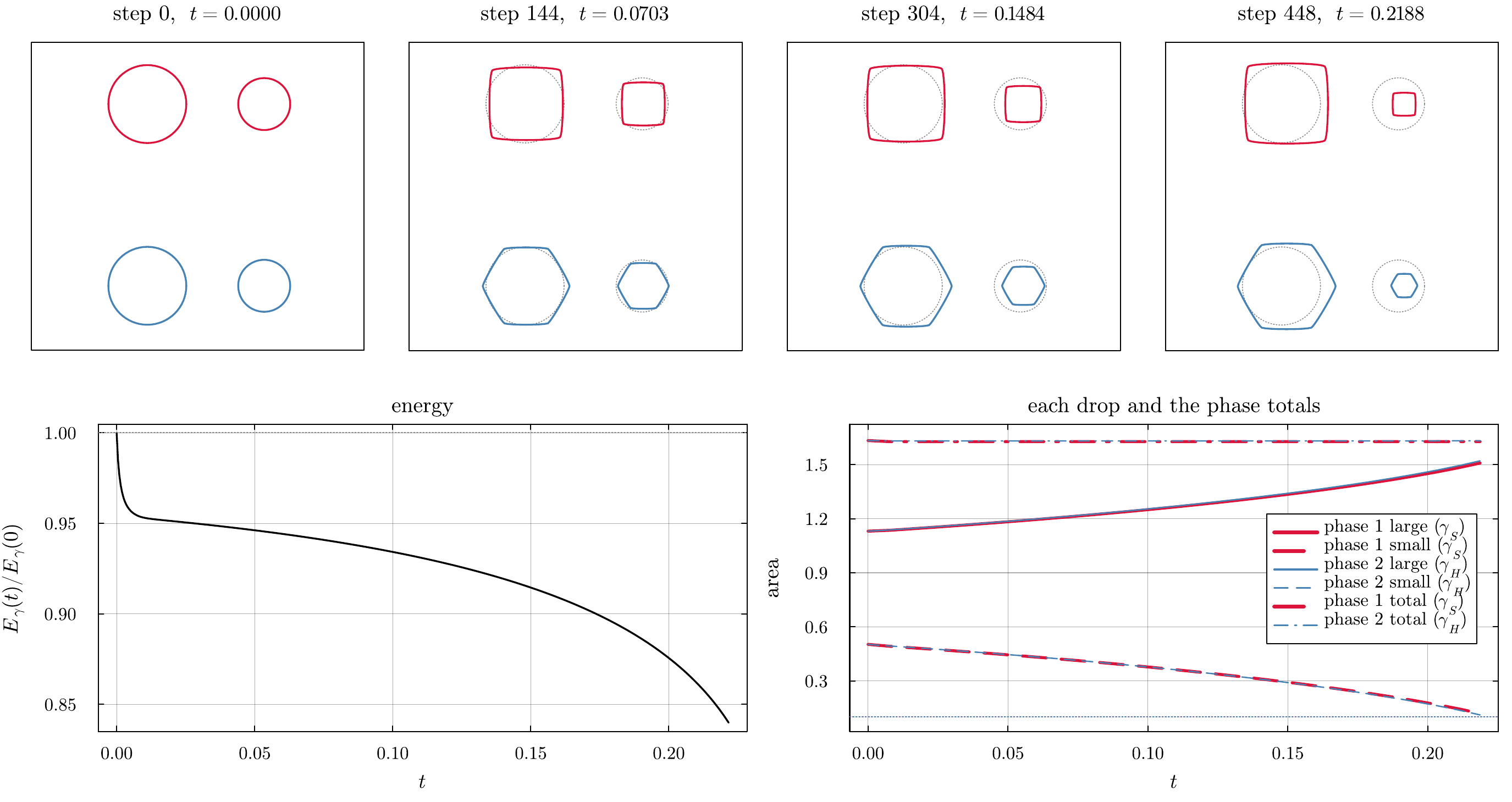}
  \caption{Three-phase coarsening: evolution of the curve network, relative energy, and the area of each
  component together with the total area of each phase; the dotted line is the stopping threshold of the area.}
  \label{fig:m6:c}
\end{figure}

\subsection{Triple-junction network carrying three different anisotropies}\label{sec:m4}

The perturbed triple bubble has six curves, four phases, four triple junctions,
namely $I_P = I_R = 4$, $I_S = 6$, and $I_T = 4$.
The number of edges on each curve is set to $(47,47,47,17,17,17)$.
The anisotropies assigned to the six curves are
$(\gamma_E, \gamma_S,\gamma_H, \gamma_S, \gamma_H, \gamma_E)$,
so that every triple junction meets three distinct anisotropies.
Figure~\ref{fig:m7:d} shows the interface evolution, the variation of anisotropic energy against the initial energy,
and the change of each enclosed area relative to its initial value.
We implement $2000$ iterations with $\dt=2.5\times10^{-6}$ and observe that $\Delta E_{\max}=-3.62\times10^{-7}$,
and thus the anisotropic surface energy is non-increasing.
The bounded-phase area drifts are $(1.07\times10^{-6},-7.89\times10^{-4},-2.59\times10^{-4})$.

\begin{figure}[!t]
  \centering
  \includegraphics[width=\textwidth]{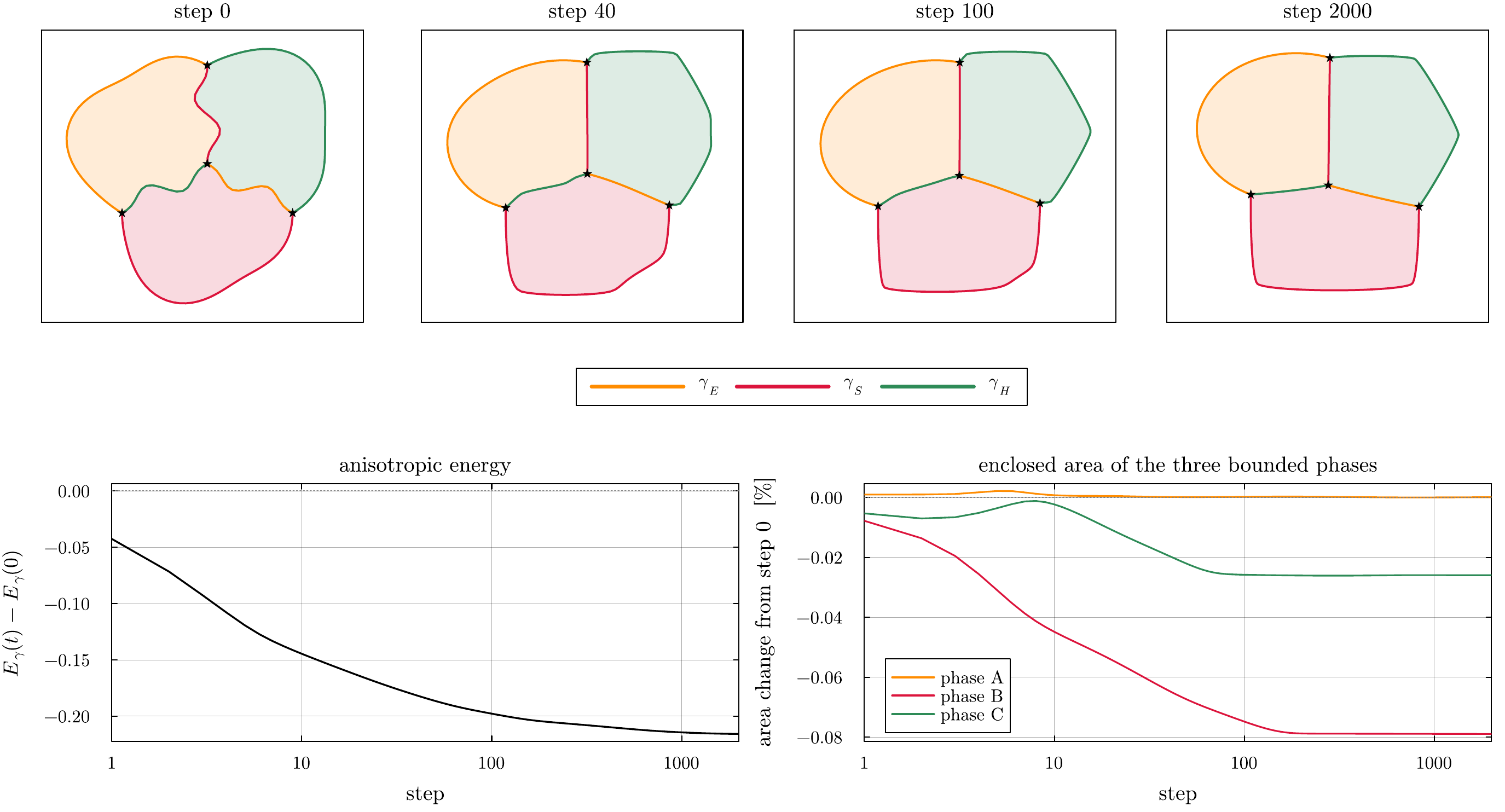}
  \caption{Mixed-anisotropy triple bubble: evolution of the curve network,
  variation of anisotropic energy, and bounded-phase areas.}
  \label{fig:m7:d}
\end{figure}

We also measure the discrete Herring residual defined by

\begin{equation}\label{eq:strong:RH}
  \RH(\junctionDiscSec{m}{k})
  :=\left|\sum_{\ell=1}^{3}\bv \xi_{s^k_\ell}(\widehat{\bv\nu^m_{k,\ell}})^\perp\right|
  \sentence{for} 1\leq k\leq I_T,
\end{equation}
where, letting $\bv Q^m_{k,\ell}$ be the vertex of
$\curvepolygonsec{m}_{\indextj{k}{\ell}}$ adjacent to $\junctionDiscSec{m}{k}$,
we set
\[
\widehat{\bv\nu^m_{k,\ell}}:=R_{-\pi/2}\widehat{\bv t^m_{k,\ell}}
  \sentence{with}
  \widehat{\bv t^m_{k,\ell}}
  :=\frac{\bv Q^m_{k,\ell}-\junctionDiscSec{m}{k}}
         {|\bv Q^m_{k,\ell}-\junctionDiscSec{m}{k}|},
  \sentence{for}
  \ell = 1,\ 2,\ 3.
\]
Here, $\widehat{\bv\nu^m_{k,\ell}}$ is built from the inward co-normal at the junction, that is,
from the unit tangent pointing from $\junctionDiscSec{m}{k}$ into
$\curvepolygonsec{m}_{\indextj{k}{\ell}}$, so that the endpoint signs
$\varepsilon_{k,\ell}$ of the Young--Herring law in \eqref{eq:strong} are absorbed up to a
common sign, which the modulus in \eqref{eq:strong:RH} discards; this uses that each
$\gamma_i$ is even, whence $\bv\xi_i$ is odd.
The terminal Herring residuals on the four triple junctions are
$(6.00\times10^{-3},4.54\times10^{-2},7.00\times10^{-2},6.21\times10^{-2})$.

We finally apply the proposed scheme to a larger network with
$I_P=I_R=8$, $I_S=18$, and $I_T=12$.
The initial configuration is strongly deformed, and the enclosed areas are unequal;
the largest bounded phase being $1.51$ times the smallest one.
The number of edges is $(11,16,14,10,18,12)$ on the six curves which surround the inner domain,
$14$ on each of the six curves which separate intermediate domains,
and $(30,36,33,28,38,31)$ on the six curves between intermediate domains and the unbounded phase.
We refer the reader to Figure~\ref{fig:sc7:e} for the anisotropies assigned to each curve.
We implement $200$ iterations with $\dt=1\times10^{-5}$
and observe that $\Delta E_{\max}=-1.34\times10^{-5}$, and thus the anisotropic surface energy is non-increasing.
The bounded-phase area drifts are $(5.19\times10^{-4},-1.27\times10^{-3},-1.25\times10^{-3},-3.96\times10^{-3},4.33\times10^{-4}, -3.39\times10^{-3},-2.57\times10^{-3})$.
The largest Herring residual \eqref{eq:strong:RH} over
the twelve triple junctions decreases from $7.78\times10^{-1}$ to $6.58\times10^{-2}$, the
smallest terminal value being $2.30\times10^{-3}$.

\begin{figure}[!t]
  \centering
  \includegraphics[width=\textwidth]{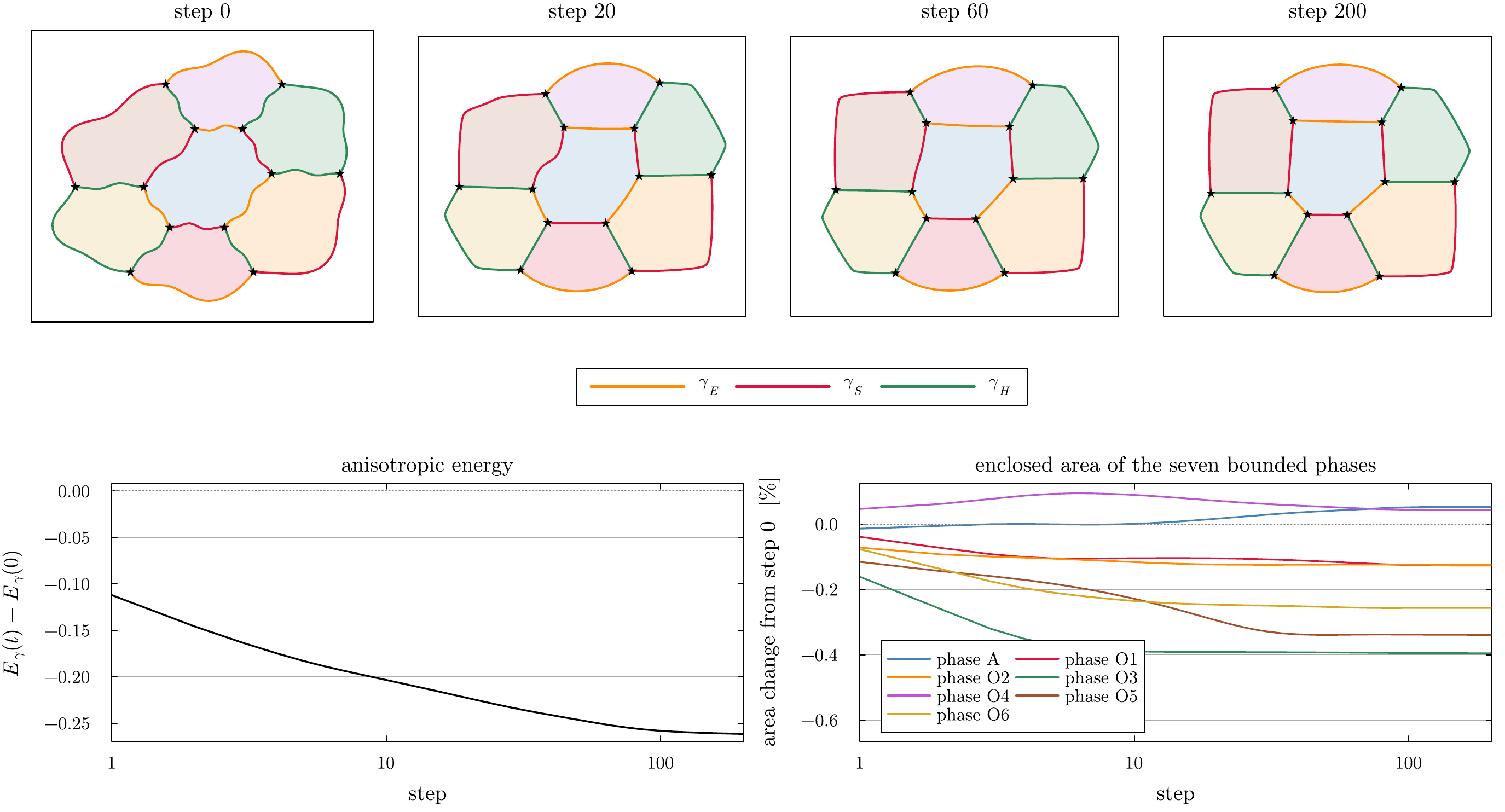}
  \caption{Seven-cell anisotropic cluster: evolution of the curve network,
  variation of anisotropic energy, and bounded-phase areas.
  }
  \label{fig:sc7:e}
\end{figure}

\subsection{Coupled refinement against a rescaled exact solution}
\label{subsec:accuracy}
Finally, we compare the proposed scheme with an exact solution of the isotropic
multi-phase Mullins--Sekerka problem, namely the network of three concentric circles
of \cite[\S8.1, (8.1a), (8.2) and Appendix~A]{EGN24}.

In this convergence test, we let $I_R=4$, $I_P=I_S=3$, and $I_T=0$.
Let $0<R_1(t)<R_2(t)<R_3(t)$ and let $\Gamma_i(t)=\partial B(\bv 0,R_i(t))$ for $1\leq i\leq3$.
The three circles bound the disk $\region{1}(t) := \{|\bv x|<R_1(t)\}$, the two annuli
$\region{2}(t) := \{R_1(t)<|\bv x|<R_2(t)\}$ and $\region{3}(t) := \{R_2(t)<|\bv x|<R_3(t)\}$,
and the unbounded exterior $\region{4}(t) := \{|\bv x|>R_3(t)\}$.
The regions $\region{1}(t)$ and $\region{4}(t)$ are occupied by the phase $0$,
while the two annuli $\region{2}(t)$ and $\region{3}(t)$ are occupied by the
phases $1$ and $2$, respectively,
and their areas are conserved in the underlying model.
We let $L_i=c_i=1$ and
$G_{i,1}$ the identity matrix in \eqref{eq:gamma}, so that
all three interfaces carry the isotropic
energy density $\gamma_i(\normal)=|\normal|$.

Setting
\[
  D_1:=R_2(0)^2-R_1(0)^2,
  \qquad
  D_2:=R_3(0)^2-R_2(0)^2,
\]
the radii solve the differential algebraic system:
\begin{equation}\label{eq:3circle-dae}
  \begin{aligned}
  R_1(t)&=\sqrt{R_2(t)^2-D_1},\\
  R_3(t)&=\sqrt{R_2(t)^2+D_2},\\
  \dot R_2(t)&=-F(R_2(t)),
  \end{aligned}
\end{equation}
where
\[
  F(u):=\left(\frac{1}{\sqrt{u^2-D_1}}+\frac1u+\frac{1}{\sqrt{u^2+D_2}}\right)
  \Big/\left(u\log\frac{u^2+D_2}{u^2-D_1}\right),
  \qquad u\in(\sqrt{D_1},\infty).
\]
The reference solution is not available in closed form.  Rather than
integrating the ordinary differential equation in
\eqref{eq:3circle-dae}, we therefore evaluate the reference radius $R_2(t)$ by a root
finding algorithm applied to the implicit relation
\[
  0=t+\int_{R_2(0)}^{R_2(t)}\frac{\mathrm du}{F(u)},
\]
which follows from \eqref{eq:3circle-dae} by separation of variables.
It has been shown in \cite[Eq.(8.1a)]{EGN24} that
the solution to \eqref{eq:3circle-dae} rigorously solves \eqref{eq:strong}, and we admit this result.

The benchmark of \cite{EGN24} uses the initial radii
$(R_1(0),R_2(0),R_3(0))=(2,2.5,3)$ and the horizon time is set to $T_0=0.5$.
We note that the target problem is invariant under the similarity law $$\widehat{\bv X_i}(t)=s\bv X_i\left(\frac{t}{s^3}\right),$$ and hence 
lengths and times may be rescaled simultaneously; for this, we use the factor $s=0.2$, and then we have
\[
  (R_1(0),R_2(0),R_3(0))=(0.4,0.5,0.6),
  \qquad T=s^3T_0=0.004.
\]
For each run, we put $N$ vertices ($N\in\{16,32,64\}$) on each circle. Let
\[
  \dt_N=\frac{T}{N^2},\qquad M_N=N^2,\qquad
  t_m=m\dt_N,\qquad M_N\dt_N=T.
\]
For all three runs, the monitored regional boundaries satisfy
\[
  r_*:=\max_{\substack{N\in\{16,32,64\}\\0\leq m\leq M_N\\1\leq\ell\leq4}}
  \max_{\bv x\in\curvepolygonsec{m}_{\region{\ell}}}|\bv x|\leq 0.6,
\]
and thus, it follows that for all $N\in\{16,32,64\}$,
\[
  \operatorname{diam}\bigl(\curvepolygonsec{m}_{\region{\ell}}\bigr)\leq 2r_* \leq 1.2<2
  \qquad 0\leq \forall m\leq M_N\sentence{and} 1\leq \forall\ell\leq 4.
\]
Hence, we deduce from Lemma~\ref{lem:diam-cap} that the hypothesis of Proposition~\ref{prop:capacity} is satisfied at every time level for all cases.
We now measure the error of radii and the error of convergence (EOC) defined by
\begin{equation}\label{eq:egamma}
  e_\Gamma(N)
  :=\max_{\substack{1\leq m\leq M_N\\1\leq i\leq3\\1\leq j\leq N}}
    \bigl||\bv X_{i,j}^{m}|-R_i(t_m)\bigr|
  \sentence{and}
  \operatorname{EOC}(N) := \log_2\frac{e_\Gamma(N/2)}{e_\Gamma(N)},
\end{equation}
where $R_i(t)\,(i = 1,\,2,\,3)$ solve the ODE \eqref{eq:3circle-dae}.
Consequently, the coupled refinement $\dt_N=O(h^2)$ gives an observed rate near two,
with energy decay at every step.
\begin{table}[!ht]
\centering
\begin{tabular}{rrlll}
\toprule
$N$ & steps & $\dt_N$ & $e_\Gamma(N)$ & $\operatorname{EOC}(N)$ \\
\midrule
$16$ & $256$  & $1.5625\times10^{-5}$ & $4.011\times10^{-3}$ & --- \\
$32$ & $1024$ & $3.9063\times10^{-6}$ & $9.543\times10^{-4}$ & $2.072$ \\
$64$ & $4096$ & $9.7656\times10^{-7}$ & $2.339\times10^{-4}$ & $2.028$ \\
\bottomrule
\end{tabular}
\caption{Convergence test:
maximum radial error and observed rate.}
\label{tab:three-circle-refinement}
\end{table}

\section{Conclusion}\label{sec:conclusion}
In this paper, we have developed a structure-preserving symmetric Galerkin boundary
element method for the anisotropic multi-phase Mullins--Sekerka flow in $\R^2$.
The bulk harmonic problems are eliminated through the interface Dirichlet-to-Neumann
operator assembled region by region, and its SGBEM realization $\dtn{\curvepolygon{}}$ inherits the three structural properties of the continuous
operator: symmetry, positive semi-definiteness, and the exact constant kernel (Proposition~\ref{prop:discrete-dtn-structure}).

For anisotropies of the square-root class \eqref{eq:gamma}, the fully discrete scheme
satisfies the energy-dissipation inequality \eqref{eq:main} for arbitrary time-step size, provided that the curve network is non-degenerate (Theorem~\ref{thm:main});
the first variation of the discrete signed area of every bounded phase vanishes at the velocity level,
so that the area defect of one step equals exactly the signed area of the displacement polygon (Propositions~\ref{prop:cons} and~\ref{prop:areadefect});
and the gauge-bordered one-step system is uniquely solvable on solvability-admissible curve networks (Theorem~\ref{prop:solv}).

The numerical experiments of Section~\ref{sec:numerics} show that
the discrete energy decays monotonically over a sweep of three orders of
magnitude in the time step, the residual of the area-defect identity
\eqref{eq:areadefect} stays at the round-off level throughout the sweep,
and the mixed-anisotropy junction networks evolve with non-increasing energy and
small phase-area drifts;
in the seven-cell test, the largest Herring residual decreases from $7.78\times10^{-1}$ to $6.58\times10^{-2}$.
Finally, a coupled space--time refinement against the three-concentric-circle exact
solution of \cite{EGN24} exhibits an observed convergence rate close to two.

The present study is restricted in several respects.
In particular, a rigorous convergence analysis of the fully discrete scheme has not been established,
and the convergence experiment is therefore limited to an isotropic configuration for which an exact solution is available.
Moreover, the analysis is carried out for anisotropies of the square-root class \eqref{eq:gamma},
and the simulations are stopped before any topological change of the curve network occurs.
The capacity condition in Proposition~\ref{prop:discrete-dtn-structure} imposes a restriction
on the absolute scale of the bounded regions, although this can always be accommodated by a similarity rescaling.

\section*{Acknowledgments}
This research is supported by the ANR (Agence Nationale de la Recherche), grant ANR-23-PEIA-0004 (PEPR IA, PDE-AI).

\bibliographystyle{plainnat}
\bibliography{cite}
\end{document}